\definecolor{linkred}{RGB}{0,191,255} 
\definecolor{linkblue}{RGB}{16, 78, 139}
	\titlespacing{\section}{0pt}{12pt}{0pt}
	\titlespacing{\subsection}{0pt}{6pt}{0pt}
\long\def\@footnotetext#1{%
\H@@footnotetext{%
\ifHy@nesting 
\hyper@@anchor{\@currentHref}{#1}%
\else 
\Hy@raisedlink{\hyper@@anchor{\@currentHref}{\relax}}#1%
\fi 
}}
\def\@footnotemark{%
\leavevmode 
\ifhmode\edef\@x@sf{\the\spacefactor}\nobreak\fi 
\H@refstepcounter{Hfootnote}%
\hyper@makecurrent{Hfootnote}%
\hyper@linkstart{link}{\@currentHref}%
\@makefnmark 
\hyper@linkend 
\ifhmode\spacefactor\@x@sf\fi 
\relax 
}%
\theoremstyle{plain}
\newtheorem{theorem}{Theorem}[section]
\newtheorem*{theorem-otal}{Theorem 1.3}
\newtheorem{proposition}[theorem]{Proposition}
\newtheorem{lemma}[theorem]{Lemma}
\newtheorem{corollary}[theorem]{Corollary}
\newtheorem{claim}[theorem]{Claim}
\theoremstyle{definition}
\newtheorem{remark}[theorem]{Remark}
\newcommand{\R}{{\mathbb R}}
\newcommand{\N}{{\mathbb N}}
\newcommand{\Z}{{\mathbb Z}}
\newcommand{\C}{{\mathcal C}}
\newcommand{\ZZ}{\mathbb{Z}}
\newcommand{\Homeo}{\mathrm{Homeo}}
\newcommand{\MCG}{\mathrm{Mod}}
\long\def\symbolfootnote[#1]#2{\begingroup%
\def\thefootnote{\fnsymbol{footnote}}\footnote[#1]{#2}\endgroup}
\def\blfootnote{\xdef\@thefnmark{}\@footnotetext}
\begin{document}

{\Large \bfseries A topological viewpoint on curves via intersection}

{\large Hugo Parlier
and Binbin Xu\symbolfootnote[1]{\small Both authors were supported by the Luxembourg National Research Fund OPEN grant O19/13865598.\\
Binbin Xu was also supported by the Fundamental Research Funds for the Central Universities, Nankai University (Grant Number 63231055), Natural Science Foundation of Tianjin (Grant number 22JCYBJC00690) and LPMC at Nankai University.\\
{\em 2020 Mathematics Subject Classification:} Primary: 57K20, 05C10. Secondary: 05C12, 32G15, 30F60. \\
{\em Key words and phrases:} closed curves on surfaces, Dehn-Thurston coordinates, mapping class groups}}

\vspace{0.5cm}

{\bf Abstract.}
This paper explores the relationship between closed curves on surfaces and their intersections. Like Dehn-Thurston coordinates for simple curves, we explore how to determine closed curves using the number of times they intersect other curves. We construct and study $k$-equivalent curves: these are distinct curves that intersect all curves with $k$ self-intersection points the same number of times. We show that such curves must intersect all simple curves in the same way, but that all other possible implications fail. Our methods give a quantitative approach to a theorem of Otal which shows that curves are determined by their intersection with all other curves. In the opposite direction, we show that non-simple curves can only be distinguished by looking at their intersection with infinitely many curves.

\vspace{0.5cm}

\section{Introduction} \label{s:introduction}

The study of closed curves on surfaces has proved to be a fascinating topic, and has been explored from algebraic, geometric, topological, dynamical and combinatorial viewpoints. Curves have been vital in understanding surfaces, their moduli spaces, and related structures. As a starting point, simple curves allow one to break surfaces into elementary pieces (such as pairs of pants or polygons), and thus became tools for the study of spaces such as Teichm\"uller spaces which encode hyperbolic (or conformal) structures. From the point of view of a dynamical system, when studying the geodesic flow on a (say hyperbolic) surface, periodic orbits play a vital role. The lengths of these orbits give the length spectrum, and this in turn leads to the spectral theory of surfaces.

From a topological point of view, the study of intersection between curves was already an essential ingredient in the work of Dehn \cite{Dehn}, precursive in its algorithmic nature. More generally, via the work of Thurston and others, simple curves have played a privileged role in geometric topology. Thurston reprised Dehn's approach to use intersection numbers to parametrize the set of isotopy classes of simple closed curves (and more generally measured laminations) using so-called Dehn-Thurston coordinates. With these coordinates established, it only requires a small step to see that there exists a reference finite collection of curves such that the intersection numbers with these curves determine unique isotopy classes of simple closed curves. For closed curves, a similar type of coordinate system does not exist, and one of the main goals of this paper is to understand to which extent similar notions can be generalized in this setting.

A clue that things become more complicated in the general framework comes from the study of length equivalent curves. These are pairs of curves such that the corresponding geodesics have the same length for any hyperbolic metric. These curves, exhibited by Randol using results of Horowitz \cite{Horowitz,Randol}, have a remarkable intersection property: despite being non-isotopic they intersect all simple closed curves the same number of times. An additional surprise came with the work of Leininger \cite{Leininger} who showed the existence of curves that are {\it not} length equivalent despite sharing this same intersection property. To further confuse matters, Chas \cite{Chas2014} exhibited examples of length equivalent curves with different self-intersections. This opened the door to understanding $0$-equivalence for curves: under which conditions do a pair of curves intersect every simple closed geodesic the same number of times?

The set of closed curves are naturally stratified by self-intersection number. This is the point of view in the multiple recent generalizations \cite{EPS, Erlandsson-Souto, Erlandsson-SoutoBook, Sapir} of Mirzakhani's simple curve counting theorem \cite{Mirzakhani}. These theorems illustrate how the set of curves with $k$-self-intersections behave in a similar fashion to simple closed curves. Erlandsson and Souto's generalization of train-tracks to so-called {\it radallas} is a concrete example of this. In this spirit, Leininger's notion of $0$-equivalence can be promoted to $k$-equivalence: two non-isotopic curves are $k$-equivalent if they intersect all curves with self-intersection $k$ the same number of times. To the best of our knowledge, the definition of $k$-equivalent curves was first formulated by Kavi \cite{Kavi} for curves on a pair of pants.

Our first result is about this notion. Throughout the paper, curves will lie on $\Sigma$, a closed orientable surface of genus $g\geq 2$.

\begin{theorem}\label{thm:fromkto0}
For any integer $k>0$, a pair of $k$-equivalent curves are also $0$-equivalent.
\end{theorem}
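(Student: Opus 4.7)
The plan is to use Dehn twist iterates: build a fixed auxiliary curve $\theta$ with self-intersection exactly $k$ and $i(\gamma,\theta)>0$, apply powers of the Dehn twist $T_\gamma$ to it, and use the asymptotic growth of intersection numbers to recover $i(\cdot,\gamma)$. The reason this fits the hypothesis perfectly is that $T_\gamma$ is a homeomorphism of $\Sigma$, so every $T_\gamma^n(\theta)$ is freely homotopic to the image of $\theta$ under a homeomorphism and therefore still has self-intersection exactly $k$. Thus the entire sequence sits inside the set of $k$-self-intersecting curves, and $k$-equivalence gives $i(\alpha, T_\gamma^n(\theta)) = i(\beta, T_\gamma^n(\theta))$ for every $n\in\N$.

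Concretely, given an essential simple closed curve $\gamma$ (the null-homotopic case being trivial), I first construct $\theta$ with self-intersection exactly $k$ and $i(\gamma,\theta)\geq 1$. One recipe: pick a simple closed curve $\theta_0$ with $i(\gamma,\theta_0)\geq 1$ (which exists whenever $\gamma$ is essential and $g\geq 2$), pick a fixed curve $\theta_1$ of self-intersection exactly $k$ supported in a subsurface disjoint from $\gamma\cup\theta_0$, and form the banded sum $\theta_0\#\theta_1$ along a short arc contained in that subsurface and disjoint from $\gamma$. A local inspection (no bigons with $\gamma$ appear and the self-intersections of $\theta_1$ cannot be reduced) shows that the resulting $\theta$ has self-intersection exactly $k$ and $i(\gamma,\theta) = i(\gamma,\theta_0) \geq 1$.

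Setting $\eta_n := T_\gamma^n(\theta)$, the $k$-equivalence of $\alpha,\beta$ gives
$$
i(\alpha,\eta_n) \;=\; i(\beta,\eta_n) \qquad \text{for all } n \in \N.
$$
I then invoke the standard asymptotic formula
$$
\lim_{n\to\infty} \tfrac{1}{n}\, i\bigl(\mu,\, T_\gamma^n(\nu)\bigr) \;=\; i(\mu,\gamma)\cdot i(\gamma,\nu),
$$
which follows from the fact that $\tfrac{1}{n}\, T_\gamma^n(\nu) \to i(\gamma,\nu)\cdot\gamma$ in the space of geodesic currents together with continuity of the intersection pairing. Dividing the previous identity by $n$ and passing to the limit yields
$$
i(\alpha,\gamma)\cdot i(\gamma,\theta) \;=\; i(\beta,\gamma)\cdot i(\gamma,\theta),
$$
and since $i(\gamma,\theta)\geq 1$ this common factor cancels, giving $i(\alpha,\gamma) = i(\beta,\gamma)$. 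As $\gamma$ was an arbitrary simple closed curve, this is exactly $0$-equivalence.

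The main obstacle I anticipate is the construction of $\theta$: one needs self-intersection \emph{exactly} $k$ (not just at most $k$) together with a genuine intersection with $\gamma$. Both points reduce to verifying that the banded sum is in minimal position, which is a routine local topological check performed in a neighborhood of the connecting arc. The asymptotic Dehn twist formula itself is a well-established fact in the theory of measured laminations and geodesic currents and may be taken as a black box.
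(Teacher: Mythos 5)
Your proof is correct and rests on the same underlying idea as the paper's: iterate a Dehn twist along an arbitrary simple curve to produce a one-parameter family of $k$-curves, feed this family into the $k$-equivalence hypothesis, and extract the intersection number with the simple curve from the growth. What differs is the technical implementation. The paper does a single twist and computes the exact difference $i(\alpha, D_\eta(\gamma)) - i(\alpha,\gamma) = i(a,\eta)\, i(c,\eta)$; to make that cylinder computation legitimate they first develop the cylinder lemma (Lemma~\ref{cylinderlemma}), which produces a taut representative in which all $\eta$-admissible self-intersections sit inside a single annular neighborhood. You instead divide by $n$ and pass to the limit, invoking $\lim_n \frac{1}{n}\, i(\mu, T_\gamma^n(\nu)) = i(\mu,\gamma)\, i(\gamma,\nu)$, a fact that lives in the theory of measured laminations and geodesic currents (weak-$*$ convergence of $\frac{1}{n} T_\gamma^n(\nu)$ to $i(\gamma,\nu)\cdot\gamma$ plus continuity of the intersection pairing). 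Your route is shorter and sidesteps the cylinder lemma entirely, but it imports exactly the non-topological machinery the authors say they deliberately worked to eliminate in favor of a self-contained combinatorial argument; your version also only gives the conclusion asymptotically, whereas the paper's version is an exact one-step identity that feeds into the quantitative results later. One small thing you handle more explicitly than the paper does: the construction of a $k$-curve $\theta$ with $i(\gamma,\theta)>0$ (the paper simply posits such a curve). Your banded-sum construction is reasonable, but the claim that the result has self-intersection exactly $k$ deserves a real verification of minimal position rather than a wave at a ``routine local check'' --- for instance, keep $\theta_1$ inside its essential subsurface with $\chi\le -1$ and invoke the bigon criterion to rule out any cancellation across the band.
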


Of course, for the statement to have content, we have to show that $k$-equivalent curves exist. Furthermore, as conjectured by Kavi for a pair of pants, one might wonder whether $k$-equivalence implies $(k-1)$-equivalence, but in fact this is far from being the case in our setting.

\begin{theorem}
Let $k\neq k'$ be strictly positive integers. Then there exist a pair of curves that are $k$-equivalent but not $k'$-equivalent. 
\end{theorem}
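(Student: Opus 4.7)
The plan is, for each positive integer $k$, to construct a single pair of non-isotopic curves $(\alpha_k,\beta_k)$ that is $k$-equivalent but whose intersection numbers with some curve of self-intersection $k'$ differ, for every $k'\neq k$. Such a pair witnesses the theorem simultaneously for all choices of $k'\neq k$, so the task reduces to exhibiting $(\alpha_k,\beta_k)$ for each $k$.

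The starting ingredient is a pair of distinct $0$-equivalent curves $\alpha_0,\beta_0$ obtained from the Horowitz-Randol or Leininger construction, supported in a subsurface of $\Sigma$. From these I would build $\alpha_k,\beta_k$ by combining the seed with an auxiliary loop $\delta$ (and possibly its powers), chosen so that a test curve $\gamma$ with $i(\gamma,\gamma)=k$ meets the $\alpha_0$- and $\beta_0$-portions in a symmetric way. For the verification of $k$-equivalence, I would decompose $i(\alpha_k,\gamma)$ into a ``seed contribution'' and a ``$\delta$ contribution''; the seed contribution matches for $\beta_k$ by the $0$-equivalence of the seed, while the $\delta$ contribution is arranged so as to agree precisely when $i(\gamma,\gamma)=k$. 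For the failure of $k'$-equivalence when $k'\neq k$, I would exhibit explicit curves $\gamma_{k'}$ with self-intersection exactly $k'$ that probe the inherited asymmetry: test curves whose self-intersection count does not match the balanced value $k$ cannot cancel out the underlying asymmetry between $\alpha_0$ and $\beta_0$.

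The main obstacle is the calibration of the construction. One must ensure that $k$-equivalence holds exactly and that no stronger equivalence slips in by coincidence, while simultaneously being able, for every $k'\neq k$, to produce a curve of self-intersection $k'$ that discriminates between $\alpha_k$ and $\beta_k$. This requires careful combinatorial control over how the self-intersection of a test curve decomposes into contributions from the seed region and the $\delta$-region, and realizing every positive integer $k'\neq k$ as the self-intersection number of such a discriminating curve is the delicate part. Theorem~\ref{thm:fromkto0} is useful here: it tells us that any $k$-equivalent pair is automatically $0$-equivalent, so the failure of $k'$-equivalence need only be verified against test curves that are not simple.
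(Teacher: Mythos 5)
Your plan has a genuine gap: it is a high-level sketch whose core mechanism is never specified, and you explicitly flag the calibration of the construction as the ``delicate part'' without resolving it. Concretely, you do not explain how the auxiliary loop $\delta$ is attached to the seed pair, why the ``$\delta$ contribution'' to $i(\alpha_k,\gamma)$ should agree with that of $\beta_k$ precisely for curves $\gamma$ of self-intersection $k$, nor how to produce, for every $k'\neq k$, a discriminating curve of self-intersection exactly $k'$. Without these, the argument does not establish either direction of the claim. You also over-aim: you want a single pair $(\alpha_k,\beta_k)$ that is $k$-equivalent but fails $k'$-equivalence for \emph{every} $k'\neq k$ simultaneously. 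The theorem only asks for one pair per ordered pair $(k,k')$, and there is no reason given (nor is it clear) that a pair satisfying your stronger requirement exists; the paper's own constructions only control $k'$-equivalence below a finite threshold $m^2k$ for fixed $m$, and by Theorem~\ref{thm:otal} any constructed pair will fail $k'$-equivalence for infinitely many $k'$, not for all $k'$ with a uniform mechanism.

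The paper's route is quite different and worth comparing. It does not start from a $0$-equivalent Horowitz--Randol or Leininger seed at all. Instead, it fixes a single $k'$-curve $\gamma$ and an auxiliary curve $\eta$ meeting $\gamma$, and builds $\alpha$ and $\beta$ by concatenating $\eta$ with $m$ copies of $\gamma$ in two ways that differ only in whether one turns left or right at the crossing point $x$ (Lemma~\ref{lem:techlem}). The curve $\gamma$ itself then detects the difference, $i(\alpha,\gamma)+2=i(\beta,\gamma)$, giving failure of $k'$-equivalence with no extra work, while a winding-number estimate in the cylinder neighborhood of $\gamma$ shows that any curve of self-intersection below $m^2k'$ (other than $\gamma$) cannot distinguish $\alpha$ from $\beta$. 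Choosing $m$ large then yields $k$-equivalence. The advantage of this construction is that both directions are controlled by the same local picture in the cylinder, whereas your seed-plus-loop decomposition leaves both the agreement and the disagreement to an unspecified balancing act.
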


More generally, given disjoint finite collections $K, K'$ of positive integers, we can find a pair of curves that are $k$-equivalent for any $k\in K$ but not $k'$-equivalent for any $k' \in K'$.

In light of this, one might think that you could find a pair of curves that can never be distinguished by intersection, but this is not the case. The following result is a slight variation on a theorem of Otal \cite{Otal}:

\begin{theorem}\label{thm:otal}
Any pair of distinct curves are not $k$-equivalent for infinitely many $k$.
\end{theorem}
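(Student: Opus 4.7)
The plan is to argue by contradiction. Assume $\alpha,\beta$ are distinct closed curves that are $k$-equivalent for all but finitely many $k$; then there exists an integer $N$ such that $i(\alpha,\gamma)=i(\beta,\gamma)$ whenever $i(\gamma,\gamma)\geq N$. Writing $F(\gamma):=i(\alpha,\gamma)-i(\beta,\gamma)$, this says $F$ vanishes on every curve of self-intersection at least $N$. The plan is to promote this to $F\equiv 0$ on all closed curves, and then apply Otal's theorem---a closed curve is determined by its intersection data with all closed curves---to obtain $\alpha=\beta$, the desired contradiction.

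The promotion is the main content. Given $\gamma$ with $i(\gamma,\gamma)<N$, I aim to construct $\tilde\gamma$ with $i(\tilde\gamma,\tilde\gamma)\geq N$ and $F(\tilde\gamma)=F(\gamma)$; since $F(\tilde\gamma)=0$ by hypothesis, this forces $F(\gamma)=0$. Fix an auxiliary curve $\delta$ in general position with $i(\delta,\delta)\gg N$, so that $F(\delta)=0$ by hypothesis, and take $\tilde\gamma$ to be a band-sum of $\gamma$ and $\delta$ along a short arc chosen to avoid $\alpha\cup\beta$. When everything is generic, the band-sum representative is in minimal position with $i(\alpha,\tilde\gamma)=i(\alpha,\gamma)+i(\alpha,\delta)$ and $i(\beta,\tilde\gamma)=i(\beta,\gamma)+i(\beta,\delta)$, whence $F(\tilde\gamma)=F(\gamma)+F(\delta)=F(\gamma)$; and the essential self-intersections of $\delta$ survive in $\tilde\gamma$, giving $i(\tilde\gamma,\tilde\gamma)\geq i(\delta,\delta)\geq N$. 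Running $\gamma$ over all closed curves yields $F\equiv 0$, and Otal's theorem closes the argument.

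The main obstacle is the exact additivity $i(\alpha,\tilde\gamma)=i(\alpha,\gamma)+i(\alpha,\delta)$, which a priori is only an inequality $\leq$: one must rule out ``mixed'' bigons between $\tilde\gamma$ and $\alpha$ whose $\tilde\gamma$-side runs through the connecting band, since such bigons would cancel pairs of crossings, and similarly for $\beta$ and for the essential self-intersections of $\delta$ inside $\tilde\gamma$. A generic choice of $\delta$ from the infinite pool of curves with $i(\delta,\delta)\geq N$, possibly via a small perturbation before band-summing, should rule such bigons out. A cleaner alternative that bypasses the local bigon analysis is to extend $F$ to a continuous function on Bonahon's space of geodesic currents $\CC(\Sigma)$, observe that appropriately rescaled curves of self-intersection $\geq N$ are dense in $\CC(\Sigma)$, and conclude $F\equiv 0$ on all currents by continuity---Otal's theorem then again delivers $\alpha=\beta$.
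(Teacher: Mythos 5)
Your approach is genuinely different from the paper's and also logically inverted relative to its goals. The paper proves this statement directly and constructively: it uses the pants lemma (Lemma 4.1) to find a pants decomposition in which $\alpha$ and $\beta$ become simple and disjoint outside cylinder neighbourhoods of the pants curves, and then builds, for arbitrarily large $k$, an explicit $k$-curve $\gamma$ with $i(\alpha,\gamma)\neq i(\beta,\gamma)$ by twisting an arc in one of these cylinders or in $\Sigma^-$. That proof is elementary and self-contained, yields the quantitative bound $k'\leq 17k$ of Theorem 1.4, and --- as the authors stress --- is independent of Otal's theorem, which the paper in fact reproves as a consequence. Your plan goes the other way: it takes Otal's theorem as an input and bootstraps from ``all curves'' to ``all curves of high self-intersection.'' Even if the bootstrap were watertight it would neither recover the quantitative statement nor serve the paper's stated aim of an elementary reproof.

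The bootstrap itself has a gap exactly where you flag one. The additivity $i(\alpha,\tilde\gamma)=i(\alpha,\gamma)+i(\alpha,\delta)$ for a band-sum is precisely the hard point: the band-sum representative need not be taut with respect to $\alpha$, and ruling out bigons running through the band is a real topological problem, not something a ``generic choice of $\delta$'' settles for free. Controlling tautness near such junctions is exactly what the paper's cylinder lemma and pants lemma are designed to do, so waving the issue away leaves the core of your argument unproved. Your fallback via geodesic currents is in much better shape: $F$ extends continuously and bilinearly to currents, and for $\gamma\neq\delta$ any sequence of rescaled curves $t_m\gamma_m\to\gamma+\delta$ must have $i(\gamma_m,\gamma_m)\to\infty$, since $i(\gamma+\delta,\gamma+\delta)>0$ forces $t_m$ bounded below if $i(\gamma_m,\gamma_m)$ were bounded, which together with a fixed filling curve would confine the $\gamma_m$ to a finite set whose rays cannot accumulate on a genuine two-component current. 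This gives $F(\gamma+\delta)=0$, hence $F(\gamma)=0$, hence $F\equiv 0$ and Otal applies. But this route imports both Bonahon's density theorem and Otal's theorem, and it essentially runs in reverse the remark the paper makes immediately after the statement (that the theorem plus density of curves in currents recovers Otal). If you pursue it, be explicit that density of \emph{high self-intersection} curves is a small additional claim beyond plain density of curves, and be aware that you are proving the result with strictly heavier machinery than the paper's topological argument requires.
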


The original result of Otal is that a geodesic current, as defined by Bonahon \cite{Bonahon}, is determined by its intersection with all curves. A curve is a type of current, so Otal's result implies that any two curves that have the same intersection with any other curve are freely homotopic. In the opposite direction, the above result, using the density of curves inside the space of currents, can be used to obtain Otal's result in its entirety. Our proof and approach provide the following quantitative version of the theorem above:

\begin{theorem}
A pair of distinct curves $\alpha$ and $\beta$ with self-intersection smaller or equal to $k\geq 0$ are not $k'$-equivalent for some $k'\le17k$.
\end{theorem}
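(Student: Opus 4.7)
The plan is, given distinct curves $\alpha$ and $\beta$ with self-intersection at most $k$, to produce a curve $\gamma$ with $i(\gamma,\gamma) \le 17k$ satisfying $i(\alpha,\gamma) \ne i(\beta,\gamma)$, which witnesses the failure of $k'$-equivalence for $k' = i(\gamma,\gamma)$. A first, free reduction comes from using $\alpha$ and $\beta$ themselves as candidate tests: since $i(\alpha,\alpha), i(\beta,\beta) \le k \le 17k$, if neither of these already distinguishes $\alpha$ from $\beta$ then
\[ i(\alpha,\alpha) = i(\beta,\alpha) \quad\text{and}\quad i(\beta,\beta) = i(\alpha,\beta), \]
and hence $i(\alpha,\alpha) = i(\alpha,\beta) = i(\beta,\beta) =: m \le k$. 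If $m = 0$ then $\alpha$ and $\beta$ are disjoint distinct simple curves, and the Dehn--Thurston coordinate argument recalled in the introduction produces a simple distinguishing test curve with self-intersection $0 \le 17k$; so I would restrict attention to the case $m \ge 1$, where $\alpha$ and $\beta$ cross transversally at some point $x$.

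Next I would build candidate test curves from short cyclic words $w$ in $\{\alpha^{\pm 1},\beta^{\pm 1}\}$ based at $x$. Letting $\gamma_w$ be a closed curve representing the associated free homotopy class, and letting $w$ contain $p$ letters of type $\alpha^{\pm 1}$ and $q$ letters of type $\beta^{\pm 1}$, bilinearity of the intersection form on geodesic currents gives
\[ i(\gamma_w,\gamma_w) \;\le\; p^2\, i(\alpha,\alpha) + 2pq\, i(\alpha,\beta) + q^2\, i(\beta,\beta) + C(w), \]
where $C(w)$ accounts for extra self-crossings introduced by the concatenations at $x$. Using the equality of the three intersection numbers this simplifies to $(p+q)^2 m + C(w) \le (p+q)^2 k + C(w)$. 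Choosing $w$ with $p + q = 4$ (for instance $w = \alpha\beta\alpha^{-1}\beta^{-1}$ or $w = \alpha^2 \beta^2$) and checking that $C(w)$ can be absorbed into $k$ at the single basepoint $x$, we land within the target bound $i(\gamma_w,\gamma_w) \le 16k + k = 17k$.

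The heart of the argument, and the hardest step, is to show that among this finite family of short concatenations at least one $\gamma_w$ satisfies $i(\alpha,\gamma_w) \ne i(\beta,\gamma_w)$. This is a quantitative refinement of Theorem \ref{thm:otal}: Otal's theorem guarantees that some distinguishing test curve exists, but gives no a priori control on its self-intersection. The underlying idea is that the local combinatorial structure of $\alpha$ and $\beta$ near the crossing $x$ already determines each of them as a geodesic current in a neighborhood of $x$, so if the intersection numbers of $\alpha$ and $\beta$ with every bounded-length word based at $x$ were to coincide, then $\alpha$ and $\beta$ would represent the same free homotopy class. Making this effective and turning the density-of-curves argument underlying Theorem \ref{thm:otal} into a concrete word-length bound is where the real work lies and where the constant $17$ emerges.
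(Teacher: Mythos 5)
Your proposal reduces, after the free observation that testing against $\alpha$ and $\beta$ themselves forces $i(\alpha,\alpha)=i(\alpha,\beta)=i(\beta,\beta)$, to the claim that some short word $w$ in $\alpha^{\pm1},\beta^{\pm1}$ based at a crossing point produces a curve $\gamma_w$ with $i(\alpha,\gamma_w)\neq i(\beta,\gamma_w)$. This is precisely where the proof stops being a proof: you acknowledge in the last paragraph that this step is ``where the real work lies,'' but give no argument for it. The assertion that ``if the intersection numbers of $\alpha$ and $\beta$ with every bounded-length word based at $x$ were to coincide, then $\alpha$ and $\beta$ would represent the same free homotopy class'' is exactly the content of the theorem, not a lemma you can invoke — and it is not clear that it is even true for a fixed a priori bound like $p+q\le 4$, independent of the topology of $\Sigma$ and of the particular pair $\alpha,\beta$. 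There is also a technical issue in the self-intersection bound: bilinearity of the intersection form applies to the \emph{weighted current} $p\alpha+q\beta$, whose self-intersection is indeed $p^2 i(\alpha,\alpha)+2pq\,i(\alpha,\beta)+q^2 i(\beta,\beta)$, but $\gamma_w$ is a single closed curve in the free homotopy class of the word $w$, a different object. Its self-intersection is not controlled by that formula without further argument (the correction term $C(w)$ is not merely accounting for ``extra crossings at $x$''), so even the $17k$ bound for your candidate curves is unjustified as written.

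The paper's actual route is structurally different and does the work you defer. It first disposes of the case where $\alpha,\beta$ are not $0$-equivalent using the simple-curve Dehn--Thurston reference system. In the $0$-equivalent case it invokes the pants lemma (Lemma \ref{lem:pants}) to find an $\{\alpha,\beta\}$-cylindrical pants decomposition, so that all self-intersections of $\alpha\cup\beta$ live in the annular neighborhoods $U_i$ of the pants curves. The distinguishing curve $\gamma$ is then built explicitly: one takes an arc $b$ in a cylinder $U_i$ where $\alpha\cap U_i$ and $\beta\cap U_i$ differ, extends it through $\Sigma^-$ following $\alpha$ (or $\beta$) until it can be closed up, and shows that $\gamma$ or $D_{\eta_i}(\gamma)$ separates $\alpha$ from $\beta$ (this is Case 2.2 of Theorem \ref{thm:otal}, made quantitative). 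The self-intersection of $\gamma$ is then bounded using the fact that its extensions run along at most two copies of $\alpha$ or $\beta$, together with a preliminary change of pants decomposition — obtained via Dylan Thurston's resolution formula (Equation \ref{eqn:thurston}) — that guarantees $i(\alpha,\eta_i')$ and $i(\beta,\eta_i')$ are bounded linearly in $k$. The constant $17$ comes from tracking these explicit bounds. None of this reduces to controlling words in $\alpha$ and $\beta$ at a single basepoint, which is why your sketch, while suggestive, does not constitute a proof and does not recover the paper's mechanism for producing the constant.
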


This relates to a question we first raised: what does a reference system for all closed curves - that is a collection of curves whose intersection determines a curve - look like? Provided you have information on the self-intersection of the curves to be detected, the above result tells you that you need only look at the intersection with curves with bounded self-intersection. However, our methods also show that, unlike Dehn-Thurston coordinates for simple curves, a coordinate system necessarily consists of infinitely many curves. More precisely, if we denote by $\C_k$ the set of homotopy classes of curves with $k$-self-intersections, we prove:

\begin{corollary}\label{cor:infinite}
Given any finite collection of curves $\eta_1,\hdots,\eta_n$, and an integer $k>0$, the map $
\varphi: \C_k \to \N^n
$
given by
$$
\varphi(\alpha)= \left(i(\alpha,\eta_1), \hdots, i(\alpha,\eta_n)\right)
$$
is not injective.
\end{corollary}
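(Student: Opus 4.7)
I would construct two distinct curves in $\C_k$ with identical $\varphi$-image, drawing on the existence theorems established earlier. First, let $k_j := i(\eta_j, \eta_j)/2$ denote the self-intersection of $\eta_j$, and set $K := \{k_j : k_j \geq 1\}$. Applying the strengthened existence result for $k$-equivalent curves stated immediately after the second main theorem of the introduction (with this $K$ and $K' = \emptyset$) produces distinct curves $\alpha, \beta$ that are $k_j$-equivalent for every $k_j \in K$. By Theorem~\ref{thm:fromkto0}, the pair is automatically $0$-equivalent, so $i(\alpha, \eta_j) = i(\beta, \eta_j)$ for every $j$, whether $k_j = 0$ or not. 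In particular $\varphi(\alpha) = \varphi(\beta)$, giving non-injectivity on the level of all curves.

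What remains is to arrange $\alpha, \beta \in \C_k$ specifically. If the $\eta_j$'s fail to fill $\Sigma$, I would bypass the previous step entirely: pick a simple closed curve $\gamma \subset \Sigma \setminus \bigcup_j \eta_j$ not isotopic to any $\eta_j$, together with any $\alpha \in \C_k$ having $i(\alpha, \gamma) > 0$ (such curves abound for $k>0$). The Dehn twist $T_\gamma$ preserves each $[\eta_j]$ and is a homeomorphism of $\Sigma$, hence $T_\gamma(\alpha) \in \C_k$ is distinct from $\alpha$ with $\varphi(T_\gamma(\alpha)) = \varphi(\alpha)$.

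The hard part is the filling case, where no such $\gamma$ exists and I must return to the pair $(\alpha, \beta)$ produced in the first paragraph. Here one exploits the flexibility of the construction behind the generalized existence theorem: the template generating $k_j$-equivalent pairs is expected to be adjustable by auxiliary data (padding curves, local surgeries, or similar) that tunes the self-intersection of the resulting pair without disturbing the $k_j$-equivalences. Verifying that this freedom is enough to place both $\alpha$ and $\beta$ in $\C_k$, for any prescribed $k > 0$, requires a closer look at the explicit construction; this is the principal obstacle, and the natural route is to check that one can concatenate the template with a curve having prescribed self-intersection while keeping $i(\cdot, \eta_j)$ additive and symmetric across the pair.
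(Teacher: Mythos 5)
Your approach diverges from the paper's, and the divergence opens a gap that you correctly identify but do not close. You invoke the remark after the second theorem of the introduction (existence of pairs that are simultaneously $k_j$-equivalent for all $k_j\in K$) to obtain distinct curves $\alpha,\beta$ with $\varphi(\alpha)=\varphi(\beta)$, using Theorem~\ref{thm:fromkto0} to handle any simple $\eta_j$. That reduction is fine as far as it goes, but it gives no information whatsoever about $i(\alpha,\alpha)$ or $i(\beta,\beta)$. The construction behind Lemma~\ref{lem:techlem} spirals $m$ times around chosen reference curves, so the self-intersection numbers of the output pair grow with $m$ and with the self-intersections of the base curves; nothing in Section~\ref{sec:tobeornottobe} forces them to equal $2k$ for the prescribed $k$, and it is not even verified there that $i(\alpha,\alpha)=i(\beta,\beta)$, which is a prerequisite for both curves to lie in the same $\C_k$. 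Your Dehn-twist shortcut in the non-filling case is sound, but the filling case is where the statement carries its content, and your treatment of it is speculative: ``padding curves, local surgeries, or similar'' is a wish, not an argument.

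The paper sidesteps the entire difficulty by constructing the two curves directly. It takes an $\{\eta_1,\hdots,\eta_n\}$-cylindrical pants decomposition $P$ (the pants lemma applied to the multicurve $\{\eta_i\}$), a dual simple curve $\zeta$, and a disjoint companion $\zeta'$ differing only by opposite unit twists in two designated cylinders $U,U'$. The union of $\zeta^k$ and $\zeta'$ leaves two pairs of parallel strands in $\Sigma\setminus(U\cup U')$, and inserting the connecting crossings in one pair versus the other yields two distinct curves, both with exactly $k$ self-intersections because the two surgeries are symmetric. All crossings are placed inside cylinders where the arcs meet each $\eta_i$ with a uniform sign, so neither surgery changes $i(\cdot,\eta_i)$; hence $\varphi$ agrees on the pair. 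Since the equality of $\varphi$-images and membership in $\C_k$ are produced by the same construction, there is nothing to patch. Closing the gap in your route would essentially amount to re-deriving this construction from scratch.
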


Perhaps the most surprising part of this is that knowing the self-intersection of the curve,unless it is $0$, does not help. This is due to the following phenomena: 
\begin{enumerate}
	\item A curve with self-intersections can be always obtained by considering a simple multicurve and a transversal pants decomposition, and then by adding crossings by cutting and pasting segments that pass through a given pants curve. This is the statement of Lemma \ref{lem:pants}, one of our main technical results which we call the {\it pants lemma}. 
	\item It requires finitely many curves to detect simple multicurves, but infinitely many curves to detect the crossings. This is due to the fact that any curve used to detect the crossing necessarily changes as one applies different elements of the mapping class group. Hence, one needs not just a single curve, but its full mapping class group orbit. 
\end{enumerate}

We end this introduction with a few words about how our understanding evolved while working on this project. Many of these questions came from the study of geodesics on hyperbolic surfaces, including the study of length equivalent curves. In fact, our first proofs of the existence of $k$-equivalent curves used the seminal Birman-Series result \cite{Birman-Series85} on the sparseness of geodesics with bounded intersection. Ultimately however, our statements are topological and so we wanted a purely topological proofs and constructions. At one point, we began to realize that finding sets of reference curves for non-simple curves was what at the heart of what we were after. The "pants lemma" (Lemma \ref{lem:pants}) was a milestone for us and allowed us to "rid" ourselves of any non-topological techniques. It also allowed us to reprove Otal's theorem with elementary arguments and lead us to the fact that to distinguish between non-simple curves, you need to check intersection with infinitely many curves.

\medskip

{\bf Organization.}

This paper is organized as follows. We begin by background and definitions. Then, in Section \ref{sec:kto0}, we use the so-called cylinder lemma to show that $k$-equivalence implies $0$-equivalence. In Section \ref{sec:pants}, we prove the pants lemma and use it to prove that distinct curves are not $k$-equivalent for infinitely many $k$. Section \ref{sec:tobeornottobe} is dedicated to the construction of $k$-equivalent curves which are not $k'$ equivalent and related results. The main result of Section \ref{sec:infinite} is that any reference system consists infinitely many curves.

{\bf Acknowledgments.}

We thank Hanh Vo for bringing Kavi's paper to our attention.

\section{Preliminaries}

\subsection{Curves on surfaces}
Let $\Sigma$ be a closed orientable surface of genus $g\geq 2$. A \textit{curve} on $\Sigma$ is the image of a continuous map from the circle $S^1$ to $\Sigma$. We say that two curves $\gamma_1$ and $\gamma_2$ are \textit{homotopic} to each other if there is a homotopy of $\Sigma$ sending $\gamma_1$ to $\gamma_2$, and we write $\gamma_1\sim\gamma_2$. A curve is said to be \textit{essential} if it is not homotopic to a point on $\Sigma$ (the image of a constant map of $S^1$). A curve is said to be \textit{simple} if it is essential and homotopic to a curve without self-intersections, or in other words, the image of an embedding of $S^1$ to $\Sigma$. Throughout, we only consider primitive curves, meaning that they are not proper powers of another curve and all curves will be essential. For simplicity, we will use the word curve to denote both a homotopy class of a curve and its representatives, and the meaning should be clear depending on the context. 

For $\gamma_1$ and $\gamma_2$ two essential curves, $i(\gamma_1,\gamma_2)$ denotes minimal geometric intersection between their representatives. Unless explicitly stated, we assume that given any collection of curves, they are in a pairwise minimal intersecting position, and thus each intersection point is transverse. This leads to the notion of a taut representation (explained in more detail below for multicurves).

Note that with this convention, if $\gamma_1=\gamma_2=\gamma$, then there exists a positive integer $k$, such that $i(\gamma,\gamma)=2k$ and we call $\gamma$ a \textit{$k$-curve}. We denote by $\C_k$ the collection of $k$-curves so for example $\C_0$ denotes the set of simple curves. Two distinct curves $\alpha$ and $\beta$ are said to be \textit{$k$-equivalent} if for any $\gamma\in\C_k$, we have $i(\alpha,\gamma)=i(\beta,\gamma)$. We denote $k$-equivalence by $\alpha\sim_k\beta$. 

It would be difficult to write a paper about curves and intersections without at least mentioning the classical problem of determining self-intersection of a curve (or a collection of curves) via some set of data. Curves correspond to elements of $\pi_1(\Sigma)$, so a generating set of $\pi_1(\Sigma)$ gives a natural description of the set of curves as conjugacy classes of words. How this description relates to intersection numbers is exactly at the heart of the different algorithms which aim to compute the self-intersection number of a curve \cite{Birman-Series87, Cohen-Lustig, Tan, Despre-Lazarus}. Further relations between words and intersection have been explored from different points of view \cite{Chas2015, Chas-Lalley}. 

\subsection{Multicurves and tautness}
A {\it multicurve} is a collection of curves. If the curves are simple and disjoint, we have a simple multicurve. A pants decomposition is a type of simple multicurve, given by a maximal collection of distinct and pairwise disjoint simple curves.

The representation of a multicurve is said to be {\it taut} if it is in minimal intersecting position. The notion of taut is trickier than one might think at first. One way of obtaining a taut representation is by endowing $\Sigma$ with a hyperbolic metric and looking at geodesic representatives of curves, but this does not exactly work for multicurves or arcs. A curve with self-intersections can also have multiple taut representations, even up to ambient homeomorphism isotopic to the identity. These different representations can be obtained one from another by a sequence of elementary moves, not unlike Reidemeister moves for knots, while maintaining tautness \cite{Schrijver}. A way to check whether a multicurve is in taut position is by using what is often referred to as the bigon criterion, that a lack of (immersed) bigons or monogons is sufficient to conclude that the curves are in minimal position \cite{Hass-Scott}. 

Let $M=\{\gamma_1,\dots,\gamma_n\}$ and $M'=\{\gamma_1',\dots,\gamma_{n'}'\}$ be multicurves. Their intersection number is defined as the minimal number of points of intersection between representatives, and is equal to the the sum of the intersections of the curves that comprise them:
	\[
		i(M,M')=\sum_{i=1}^n\sum_{j=1}^{n'}i(\gamma_i,\gamma_j').
	\]
\subsection{Subsurfaces and arcs}
A \textit{subsurface} of $\Sigma$ is a connected subset of $\Sigma$ whose boundary is a collection of pairwise disjoint simple curves on $\Sigma$. Let $Y$ be a subsurface of $\Sigma$ and $\eta_1,\dots,\eta_m$ the connected components of its boundary. If there are distinct indices $i$ and $j$, such that $\eta_i$ and $\eta_j$ are homotopic to each other, then $Y$ is homeomorphic to a cylinder and we have $\partial Y=\eta_i\cup\eta_j$. A curve is homotopic to $\eta_i$ (hence homotopic to $\eta_j$ as well) is then called a \textit{core curve} of $Y$.

Conversely, by cutting $\Sigma$ along a collection of distinct and disjoint simple curves, we get a collection of subsurfaces. On $\Sigma$, a closed surface of genus $g\geq 2$, a pants decomposition contains $3g-3$ curves and cuts $\Sigma$ into $2g-2$ subsurfaces, each of which is a pair of pants (homeomorphic to a thrice punctured sphere).

Let $Y$ be a subsurface of $\Sigma$. An \textit{arc} in $Y$ is the image of a continuous map from $[0,1]$ to $Y$, such that the image of $0$ and $1$ are both on $\partial Y$.

We will adopt the following convention: if $Y$ is a cylinder, we consider homotopies of arcs with fixed endpoints, but if $Y$ has more topology, we allow free homotopies with endpoints that can move along the boundary. As for curves, if two arcs $a$ and $b$ are homotopic to each other, we write $a\sim b$. An arc in $Y$ is said to be \textit{essential} if it is not homotopic to a segment on $\partial Y$. An arc is said to be \textit{simple} if it is essential and homotopic to an arc which has no self-intersection. As for curves, we only consider essential arcs and we use word arc to denote both a homotopy class of an arc and its representatives.

Note that in order to avoid using the term arc in too many ways, we will use the term {\it segment} for a a subarc of an arc or a curve. In particular, endpoints of a segment can lie on the boundary or be interior points.

Intersection between arcs $a$ and $b$ is defined as for curves, that is minimal among representatives, and is denoted $i(a,b)$. An arc that has no self-intersections is called \textit{simple}. A collection of arcs will be referred to as a multiarc, and if the arcs are all simple and pairwise disjoint, it is a simple multiarc. Note that the notion of taut naturally extends to representations of collections of multiarcs and multicurves.

\subsection{Mapping class groups of surfaces}
Let $\Homeo^+(\Sigma)$ be the group of orientation preserving homeomorphisms of $\Sigma$ and $\Homeo^+_0(\Sigma)$ its normal subgroup consisting of homeomorphisms homotopic to identity map. The \textit{mapping class group} of $\Sigma$ is defined to be
\[
\MCG(\Sigma)= \Homeo^+(\Sigma)/\Homeo^+_0(\Sigma).
\]
We will use the term mapping class for elements in $\MCG(\Sigma)$ and their representatives.

The mapping class group can be generated using Dehn twists which will play an important part in our approach. Let $\eta$ be a simple curve and $U$ be a cylindric neighborhood of $\eta$. The \textit{left Dehn twist} along $\eta$, denoted by $D_\eta$, is a homeomorphism of $\Sigma$ which is the identity map on $\Sigma\setminus U$ and whose restriction on $U$ is a full twist to the left (see Figure \ref{fig:dtwist})

\begin{figure}[h]
\begin{center}
\includegraphics[width=5cm]{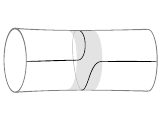}
\vspace{-24pt}
\end{center}
\caption{A left Dehn twist}
\label{fig:dtwist}
\end{figure}



The \textit{right Dehn twist} is its inverse and is denoted $D_\eta^{-1}$.


The mapping class group $\MCG(\Sigma)$ acts on the set of curves, and this action preserves the intersection numbers between curves. In particular, the sets $\C_k$ are left globally invariant for any $k\geq 0$.

For $Y$ a non-cylindrical subsurface of $\Sigma$, let $\MCG(\Sigma,Y)$ be the subgroup of $\MCG(\Sigma)$ preserving $Y$, and that preserves each connected component of $\partial Y$ globally. If $Y$ is a cylindrical subsurface, then $\MCG(\Sigma,Y)$ is the subgroup of $\MCG(\Sigma)$ that preserves the boundary of $Y$ pointwise. In both cases, $\MCG(\Sigma,Y)$ acts on the set of arcs in $Y$. Moreover, this action preserves the intersection numbers between arcs.

Note that the situation on a pair of pants is very different, as the mapping class group is finite and hence there are only finitely many curves with bounded intersection. Kavi \cite{Kavi} shows that being $k$-equivalent implies being $1$ and $2$-equivalent, but this only involves checking the intersection with a finite number of curves.

\section{From $k$-equivalence to $0$-equivalence}\label{sec:kto0}

This part is dedicated to the proof of Theorem \ref{thm:fromkto0} which states that two curves that are $k$-equivalent are also $0$-equivalent. To do so we begin by investigating intersections of arcs in a cylinder.

\subsection{Intersection among arcs in a cylinder}
Let $U\subset \Sigma$ be a cylindric neighborhood of a simple curve $\eta$. Given two curves $\alpha$ and $\beta$, we are interested in the asymptotic growth of the intersection numbers $i(D^s_\eta(\alpha),\beta)$ for $s\in\ZZ$ as $s$ goes to infinity.

We begin by studying an elementary model. Let $b$ and $r$ be two arcs in $U$ with different (fixed) endpoints on $\partial U$. Note the arcs are necessarily simple. 

The cylinder has two boundary components (a left and a right boundary component). For convenience, we give the arcs an orientation from the left boundary to the right boundary. 
With an orientation of the surface (the counter-clockwise direction is positive), this allows us to talk about positive and negative intersection points of ordered and oriented arcs: considering the sign of the rotation from the direction of the first arc (say $b$) to that of the second one (say $r$), to each intersection point one associates either $+$ or $-$.

Let us first consider the case where $b$ and $r$ intersect each other once. There are two possible configurations depending on whether their intersection is positive or negative (See Figure \ref{fig:dtwist-intersection}).

Observe the following about the effects of Dehn twists along $\eta$:
\begin{itemize}
	\item If $b$ intersects $r$ negatively, we have
	\[
	\begin{aligned}
	i(D_\eta({\color{blue}b}),{\color{red}r})&=i({\color{blue}b},{\color{red}r})+1,&&\textrm{(top left)};\\
	i(D_\eta^{-1}({\color{blue}b}),{\color{red}r})&=i({\color{blue}b},{\color{red}r})-1,&&\textrm{(top right)};	\end{aligned}
	\]
	\item If $b$ intersects $r$ positively, we have
	\[
	\begin{aligned}
	i(D_\eta({\color{blue}b}),{\color{red}r})&=i({\color{blue}b},{\color{red}r})-1,&&\textrm{(bottom left)};\\
	i(D_\eta^{-1}({\color{blue}b}),{\color{red}r})&=i({\color{blue}b},{\color{red}r})+1,&&\textrm{(bottom right)}.	
	\end{aligned}
	\]
\end{itemize}
\begin{figure}[h]
	\begin{center}
		\includegraphics[scale=1]{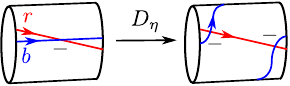}
		\includegraphics[scale=1]{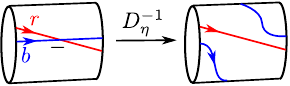}
	\end{center}
	\begin{center}
		\includegraphics[scale=1]{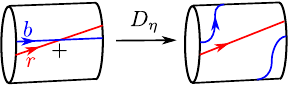}
		\includegraphics[scale=1]{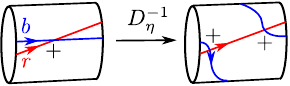}
	\end{center}
	\caption{Dehn twists and intersection numbers in a cylinder}
	\label{fig:dtwist-intersection}
\end{figure}

Notice that if $b$ intersects $r$ multiple times, all intersections will have the same sign. This is because otherwise there will be a bigon formed by a segment of $b$ and a segment of $r$. Therefore it makes sense to say that they intersect positively or negatively. Also note that, in the case of multiple intersections, when we apply a Dehn twist along $\eta$ on $b$, by restricting to a "thinner" subcylinder of $U$ containing only one intersection point between $b$ and $r$, the twist happens on this subcylinder. And so the local picture on this subcylinder is as above. We summarize the result of this argument as follows:

\begin{lemma}\label{lem:arcs}
	If $b$ intersects $r$ negatively, we have
	\[
	\begin{aligned}
	i(D_\eta({\color{blue}b}),{\color{red}r})&=i({\color{blue}b},{\color{red}r})+1;\\
	i(D_\eta^{-1}({\color{blue}b}),{\color{red}r})&=i({\color{blue}b},{\color{red}r})-1;	
	\end{aligned}
	\]
	If $b$ intersects $r$ positively, we have
	\[
	\begin{aligned}
	i(D_\eta({\color{blue}b}),{\color{red}r})&=i({\color{blue}b},{\color{red}r})-1;\\
	i(D_\eta^{-1}({\color{blue}b}),{\color{red}r})&=i({\color{blue}b},{\color{red}r})+1.	
	\end{aligned}
	\]	
 If $b$ and $r$ are disjoint, we have
	\[
	\begin{aligned}
	i(D_\eta({\color{blue}b}),{\color{red}r})&=1,&&\textrm{sgn}(D_\eta({\color{blue}b})\cap{\color{red}r})=-;\\
	i(D_\eta^{-1}({\color{blue}b}),{\color{red}r})&=1, &&\textrm{sgn}(D_\eta^{-1}({\color{blue}b})\cap{\color{red}r})=+.
	\end{aligned}
	\]
\end{lemma}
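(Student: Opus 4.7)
The plan is to reduce everything to the base case $i(b,r) \in \{0,1\}$, which is already verified by direct inspection of the four panels of Figure \ref{fig:dtwist-intersection} and the disjoint case is an elementary picture. What remains is to justify the reduction.

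First I would establish the sign uniformity: if $b$ and $r$ meet in $n \geq 2$ points, then all intersection points carry the same sign. The argument is the standard bigon criterion. Orient both arcs from the left boundary to the right boundary of $U$ and suppose toward contradiction that two consecutive (along $b$, say) intersection points $p$ and $q$ have opposite signs. Then the sub-segment of $b$ between $p$ and $q$ together with the sub-segment of $r$ between $p$ and $q$ bounds an immersed bigon in $U$ (the sign change forces the two tangent vectors to agree on orientation around the enclosed region). By the bigon criterion this contradicts the fact that $b \cup r$ is a taut collection of arcs in the cylinder $U$. Hence the sign is a well-defined invariant of the unordered pair $(b,r)$, which is what legitimizes the statements ``$b$ intersects $r$ positively / negatively''.

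Next I would localize the twist. Pick small pairwise-disjoint sub-cylinders $U_1,\dots,U_n \subset U$, each a thin annular neighborhood of $\eta$ containing exactly one of the intersection points $b \cap r$, and so that $b \cap U_j$ and $r \cap U_j$ are each a single arc crossing $U_j$ from one boundary to the other. A Dehn twist along $\eta$ is isotopic (rel $\partial U$) to a composition of twists supported in the $U_j$'s one at a time, because $D_\eta$ is defined up to isotopy by its action on any cofinal family of annular neighborhoods of $\eta$. Applying the single-intersection case inside each $U_j$ then yields $n$ independent $\pm 1$ contributions, all with the same sign by the previous paragraph. Summing gives $i(D_\eta(b), r) = i(b,r) \pm 1$ with sign as claimed, and analogously for $D_\eta^{-1}$.

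Finally the disjoint case $i(b,r)=0$ is immediate: a left Dehn twist wraps the arc $b$ once around the core of $U$, which must then cross the transversal arc $r$ exactly once, and a local check of orientations pins down the sign to be negative for $D_\eta$ and positive for $D_\eta^{-1}$. The only real subtlety in the whole argument is the bigon/sign-uniformity step; once that is in hand the localization and the single-intersection base case are routine picture-chasing.
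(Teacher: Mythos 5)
Your overall plan — verify the single-intersection picture, establish sign uniformity by the bigon criterion, then localize the twist — is the same as the paper's, and the sign-uniformity paragraph is fine. The problem is the localization step, which does not go through as written. First, a minor inconsistency: you cannot have $n \geq 2$ pairwise-disjoint sub-cylinders of $U$ each of which is an annular neighborhood of $\eta$, since every such neighborhood contains $\eta$. Presumably you mean $n$ disjoint parallel sub-annuli, one around each intersection point. But then the crucial claim is false: $D_\eta$ is \emph{not} isotopic rel $\partial U$ to a composition of Dehn twists supported one in each of $n$ disjoint parallel sub-annuli — that composition is isotopic to $D_\eta^n$, since each parallel annulus contributes a full turn. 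Your own bookkeeping flags this: if there really were $n$ independent $\pm 1$ contributions, all of the same sign, summing them would give $i(b,r) \pm n$, not $i(b,r) \pm 1$.

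The correct localization, which is what the paper uses, is to support $D_\eta$ in a \emph{single} thin annular neighborhood $V$ of $\eta$ chosen (after an isotopy if needed) to contain exactly one of the $n$ intersection points of $b$ and $r$. Inside $V$ the picture is the single-intersection base case, so the count there changes by exactly $\pm 1$; outside $V$ nothing changes, so the other $n-1$ intersections persist. The net change is $\pm 1$, and the sign-uniformity step guarantees the local sign agrees with the global sign of $b \cap r$, yielding the stated formulas.
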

We can now pass to the main technical tool of this section.

\subsection{The cylinder lemma}

Let $\eta$ be a simple closed curve, and $\alpha$ a non-simple closed curve.
Consider representatives of $\alpha$ and $\eta$ such that they intersect minimally and such that the set $Q$ of self-intersection points of $\alpha$ is disjoint from $\eta$. This means that there is a neighborhood $V_0$ of $\eta$ homeomorphic to a cylinder and disjoint from $Q$ (see Figure \ref{fig:cylinderneighborhood}).

\begin{figure}[h]
\leavevmode \SetLabels
\L(.48*.16) $V_0$\\%
\L(.48*.85) $\eta$\\%
\L(.20*.26) $\alpha$\\%
\endSetLabels
\begin{center}
\AffixLabels{\centerline{\includegraphics[width=10cm]{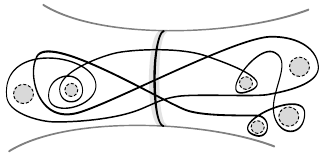}}}
\vspace{-24pt}
\end{center}
\caption{The cylinder neighborhood $V_0$}
\label{fig:cylinderneighborhood}
\end{figure}

By cutting $\Sigma$ along $\eta$, we obtain a (possibly disconnected) surface with two boundary components. By our assumption on $\alpha$, $Q$ and $\eta$, the curve $\alpha$ is cut into a collection of essential arcs $\{a_1,\dots,a_m\}$ in $\Sigma\setminus\eta$, and $Q$ is contained in $\Sigma\setminus\eta$. Moreover, each point in $Q$ is either an intersection point between two distinct arcs $a_i$ and $a_j$ or a self-intersection of an arc $a_i$. Notice that, at this moment, the intersections between arcs in $\{a_1,\dots,a_m\}$ are not necessarily minimal (even up to homotopy with respect to boundary).

In the case where $q\in Q$ lies in the intersection of two arcs belonging to $\{a_1,\dots,a_m\}$, up to possible relabelling, we assume that $q\in a_1\cap a_2$. We denote by $a_1^+$ and $a_1^-$ (resp. $a_2^+$ and $a_2^-$) the two endpoints of $a_1$ (resp. $a_2$) on the boundary of $\Sigma\setminus\eta$. Starting from one endpoint of $a_1$, we follow $a_1$ until $q$, then we continue along $a_2$ back to the boundary. In this way, we obtain an arc in $\Sigma\setminus\eta$. By considering all possible combinations, we obtain $4$ different arcs connecting the pairs of endpoints $\{a_1^+,a_2^+\}$, $\{a_1^+,a_2^-\}$, $\{a_1^-,a_2^+\}$ and $\{a_1^-,a_2^-\}$. We denote these respectively by $b_1$, $b_2$, $b_3$ and $b_4$ (see Figure \ref{fig:endpoints}). 

\begin{figure}[h]
\leavevmode \SetLabels
\L(.378*.9) $a_1^+$\\%
\L(.383*.65) $a_2^+$\\%
\L(.39*.4) $a_1^-$\\%
\L(.395*.13) $a_2^-$\\%
\L(.62*.57) $b$\\%
\L(.672*.71) $a_1^+$\\%
\L(.68*.46) $a_1^-$\\%
\L(.30*.70) $b_1$\\%
\L(.32*.46) $b_3$\\%
\L(.30*.30) $b_4$\\%
\L(.30*.10) $b_2$\\%
\L(.26*.50) $q$\\%
\L(.565*.53) $q$\\%
\endSetLabels
\begin{center}
\AffixLabels{\centerline{\includegraphics[width=8cm]{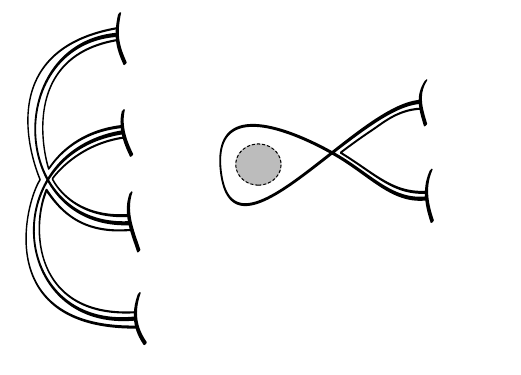}}}
\vspace{-24pt}
\end{center}
\caption{The arcs and their endpoints}
\label{fig:endpoints}
\end{figure}

When $q\in Q$ is a self-intersection point of an arc in $\{a_1,\dots,a_m\}$, up to relabelling, we assume it is $a_1$ and denote by $a_1^+$ and $a_1^-$ its endpoints. From each endpoint, we have a segment of $a_1$ connecting it to $q$. By combining them we get an arc in $\Sigma\setminus\eta$ which we denote by $b$ (see Figure \ref{fig:endpoints}). 

A point $q$ in $Q$ is said to be \textit{$\eta$-admissible} if, up to relabeling the arcs $\{a_1,\dots,a_m\}$, one of the following holds:
\begin{enumerate}
	\item $q$ is in the intersection of $a_1$ and $a_2$, and one of the arcs $b_1$, $b_2$, $b_3$ and $b_4$ is non-essential;
	\item $q$ is a self-intersection of $a_1$ and the arc $b$ is non-essential.
\end{enumerate}

In what follows, whenever $q\in Q$ is $\eta$-admissible, we denote by $b_q$ the (or one of the) non-essential arc formed by two segments in $\alpha$ connecting $q$ to $\eta$. If moreover $\eta$ has an orientation, we say that $q$ is $\eta$-admissible \textit{from the left} (resp. right) if these two segments intersect $\eta$ from its left side (resp. its right side).

\begin{remark}\label{etak}
It is useful to observe that an arc in a surface with boundary starting and ending at a same boundary component is non-essential if and only if, by connecting its endpoints using a boundary segment, we obtain a closed curve either homotopically trivial or homotopic to a power of the boundary curve.
\end{remark}

We now describe a process where we change representatives of $\alpha$ by explicit homotopies. Throughout, the representatives of the multicurve $\eta \cup \alpha$ remain taut and we continue to denote the points of self-intersection of $\alpha$ by $Q$. Our main goal is to show the following:

\begin{lemma}[The cylinder lemma]\label{cylinderlemma}
The curve $\alpha$ admits a taut representative such that there exists a cylindric neighborhood $U$ for $\eta$ satisfying the following conditions.
\begin{enumerate}
	\item $\partial U\cap Q=\emptyset$
	\item Any $q\in Q\cap U$ is also an intersection point between two arcs of $\alpha\cap U$.
\item $U$ contains all $\eta$-admissible points of $Q$. 
\end{enumerate}
\end{lemma}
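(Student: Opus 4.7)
The strategy is to modify the representative of $\alpha$ by a finite sequence of homotopies, each one pushing one $\eta$-admissible self-intersection into a cylindrical neighborhood of $\eta$, while preserving tautness of $\alpha \cup \eta$. I would start with a taut representation of $\alpha \cup \eta$ with $Q \cap \eta = \emptyset$ and a small cylindrical neighborhood $V_0$ of $\eta$ disjoint from $Q$, as in the setup preceding the lemma. If every $\eta$-admissible point of $Q$ already lies in $V_0$, we are essentially done: a small perturbation of $\partial V_0$ achieves (1), and (2) holds vacuously since $Q \cap V_0 = \emptyset$.

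Otherwise, I pick an $\eta$-admissible point $q \notin V_0$. By definition and Remark \ref{etak}, the non-essential arc $b_q$ together with a suitable boundary segment $s_q \subset \eta$ bounds either a disk or an annulus $R_q$ in $\Sigma \setminus \eta$. Among all such admissible configurations I would choose one for which $R_q$ is \emph{innermost}, meaning no other $R_{q'}$ is contained in $R_q$. Tautness of $\alpha \cup \eta$ combined with the bigon criterion then severely restricts which subsegments of $\alpha$ can sit inside $R_q$: no arc of $\alpha \cap R_q$ can have both endpoints on $s_q$ (that would furnish a bigon with $\eta$ contradicting tautness), and no $\eta$-admissible self-intersection can be interior to $R_q$ (by the innermost choice).

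Using $R_q$ as a guide, I would then homotope $\alpha$ through a finger move supported in a neighborhood of $R_q$ that drags the vertex $q$ across $s_q$ into a slightly enlarged cylindrical neighborhood $U_1 \supseteq V_0$ of $\eta$. Because $R_q$ is innermost and bigon-free, the move introduces no new self-intersections of $\alpha$ and no new crossings with $\eta$, so the new representation of $\alpha \cup \eta$ remains taut. After the push the two segments that previously made up $b_q$ cross $\eta$ and meet at $q$ inside $U_1$; being distinct branches of $\alpha$ — whether two segments from distinct arcs $a_1$ and $a_2$ in the cutting along $\eta$, or two distinct sub-branches of a single self-intersecting $a_1$ — they determine two distinct components of $\alpha \cap U_1$ crossing transversally at $q$, which is exactly condition (2) at this newly captured point of $Q \cap U_1$. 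Iterating finitely many times (each step moves one admissible point inside the cylinder without creating new ones, and $|Q|$ is finite) I would end up with a taut representative of $\alpha$ and a cylindrical neighborhood $U$ of $\eta$ containing all $\eta$-admissible self-intersections and satisfying (2). A final small perturbation of $\partial U$ enforces (1).

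\textbf{The main obstacle.} The most delicate step is verifying that the push across the innermost $R_q$ simultaneously preserves tautness of $\alpha \cup \eta$ globally and does not generate new $\eta$-admissible self-intersections outside the growing cylinder, so that the iteration truly terminates. This requires a careful case analysis, first between the two types of $\eta$-admissible points (intersection of two distinct arcs versus self-intersection of a single arc), and second between $R_q$ being a disk or an annulus. The annulus case, where $b_q$ winds around $\eta$, is the trickiest, as the push must accommodate the winding while respecting tautness and not producing new bigons between $\alpha$ and $\eta$.
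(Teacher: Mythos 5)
Your high-level strategy — iteratively pushing $\eta$-admissible intersection points of $\alpha$ into a cylindrical neighborhood of $\eta$ and arguing termination — matches the paper's proof in outline. But there is a genuine gap exactly where you flag one: the winding case. When the loop $b_q$ closed up along $\eta$ is homotopic to $\eta^k$ with $k\ne 0$, the region ``$R_q$'' you describe is not an embedded disk (or even an embedded annulus in general), and a plain finger move across an ``innermost'' $R_q$ is not defined. The paper handles this by an explicit path surgery: write $b_q=e\ast e'^{-1}$, set $c$ the segment of $\eta$ from $p$ to $p'$, use $e'\ast(e\ast c)^{-1}\sim\eta^k$, and replace $e'$ by the path $e_1\ast e_3\ast c_m\ast e_4''$ where $e_3''=e_3'\ast\eta_m^{-k}$, $e_4''=\eta_m^k\ast e_4'$ absorb the winding into the cylinder. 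Verifying that this replacement still gives a minimal-position representative — and that the transported intersection point $q$ is essential in its new location — is the content of Claim \ref{intersectionnumber}, which works in the universal cover. Your proposal acknowledges this difficulty but does not resolve it; acknowledging it is not a substitute for an argument.

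Two of your intermediate assertions are also unjustified. First, ``no arc of $\alpha\cap R_q$ can have both endpoints on $s_q$'' is a bigon argument that is fine when $R_q$ is a disk but fails when $R_q$ is annular, since an arc winding around the core of the annulus can have both endpoints on $s_q$ without forming a bigon. Second, ``the move introduces no new self-intersections of $\alpha$ and no new crossings with $\eta$'' is too strong: nothing in the innermost choice prevents other strands of $\alpha$ from running through $R_q$, and dragging $b_q$ across such strands does create crossings. The paper does not claim this either; instead it checks tautness of the \emph{final} representative $\alpha_3$ by a case analysis on bigons ($c_1\cup c_2$ both inside $U$, both outside, one in each, etc.), and separately treats points that are $\eta$-admissible from the left and from the right before doing a relative homotopy to remove spurious intersections. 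You would need some version of this global tautness check for your iteration to produce the object the lemma asserts, and to guarantee condition (3) — that \emph{all} $\eta$-admissible points end up in $U$, not just those you chose to push.
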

\begin{proof}[Proof of Lemma \ref{cylinderlemma}]

If $\alpha$ and $\eta$ are disjoint, then $\eta$ admits a cylindric neighborhood disjoint from $\alpha$, hence the lemma holds vacuously. 
	
When $\alpha$ and $\eta$ intersect, the proof is an iterative construction to get a curve $\alpha'$ homotopic to $\alpha$, which forms no monogon or bigon with itself or $\eta$. Hence, by the bigon criterion, the multicurve $\alpha'\cup\eta$ is taut. 
	
Let $q\in Q$ be $\eta$-admissible. Let $b_q$ be the associated non-essential arc constructed as above. By cutting $b_q$ at $q$, we get two segments (denoted $e$ and $e'$) oriented from $q$ to $\eta$. Let $p$ and $p'$ be the endpoints of $e$ and $e'$ on $\eta$. 
	
It will be convenient to consider $\eta$ with a fixed orientation. Without loss of generality, we assume that $q$ is $\eta$-admissible from the left, that is, both $e$ and $e'$ intersect $\eta$ from the left side. The points $p$ and $p'$ separate $\eta$ into two segments, and we denote by $c$ the one from $p$ to $p'$ following the orientation of $\eta$. By hypothesis, the loop $e'\ast (e\ast c)^{-1}$ is homotopic to $\eta^k$ for some $k\in\Z$ (see Remark \ref{etak}). 

We first consider the case when $k=0$. Consider
	\[
		e"=e'\ast(e\ast c)^{-1}\ast(e\ast c)\sim e'.
	\]
Since $e'\ast(e\ast c)^{-1}$ is homotopic to a point, we conclude that both $e'$ and $e"$ are homotopic to $e\ast c$. 

\begin{figure}[h!]
\leavevmode \SetLabels
\L(.37*.91) $e'$\\%
\L(.37*.08) $e$\\%
\L(.465*.47) $c$\\%
\L(.24*.52) $q$\\%
\L(.505*.52) $q$\\%
\L(.62*.57) $e''$\\%
\L(.465*.85) $p'$\\%
\L(.465*.2) $p$\\%
\L(.72*.85) $p'$\\%
\L(.72*.2) $p$\\%
\endSetLabels
\begin{center}
\AffixLabels{\centerline{\includegraphics[width=8cm]{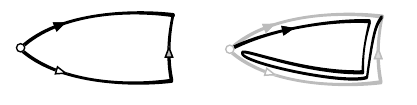}}}
\vspace{-24pt}
\end{center}
\caption{Replacing $e'$}
\label{fig:replace}
\end{figure}

Therefore, we can replace $e'$ in $\alpha$ by $e\ast c$ to get a new representative of $\alpha$.
	
We now consider the case where $k\neq 0$. Without loss of generality, we assume that $k>0$, and
	\[
		e'\ast(e\ast c)^{-1}\sim\eta^k.
	\]

We consider a cylinder neighborhood $U$ of $\eta$, such that $U\cap Q=\emptyset$. Hence $\alpha\cap U$ is a collection of disjoint simple arcs. The curve $\eta$ separates $U$ into two cylinders, and we denote by $U^+$ the one on the left. The boundary $\eta^+$ of $U^+$ which is different from $\eta$ separates $e$ (resp. $e'$) into two segments $e_1$ and $e_2$ (resp. $e_1'$ and $e_2'$). We denote by $p_+$ (resp. $p_+'$) the separating point. The orientation on $\eta$ induces an orientation on $\eta^+$. The segment from $p_+$ to $p_+'$ following this orientation is denoted by $c_+$.

\begin{figure}[h]
\leavevmode \SetLabels
\L(.03*.52) $q$\\%
\L(.245*.575) $p_+'$\\%
\L(.245*.31) $p_+$\\%
\L(.325*.575) $p_m'$\\%
\L(.33*.35) $p_m$\\%
\L(.375*.575) $p'$\\%
\L(.375*.39) $p$\\%
\L(.62*.425) $e_1$\\%
\L(.61*.63) $e_1'$\\%
\L(.84*.34) $e_3$\\%
\L(.84*.57) $e_3'$\\%
\L(.885*.36) $e_4$\\%
\L(.885*.57) $e_4'$\\%
\L(.798*.445) $c_+$\\%
\L(.838*.45) $c_m$\\%
\L(.90*.46) $c$\\%
\L(.30*.83) $\eta_+$\\%
\L(.35*.83) $\eta_m$\\%
\L(.40*.83) $\eta$\\%
\L(.31*.15) $U_3^+$\\%
\L(.36*.15) $U_4^+$\\%
\endSetLabels
\begin{center}
\AffixLabels{\centerline{\includegraphics[width=7cm]{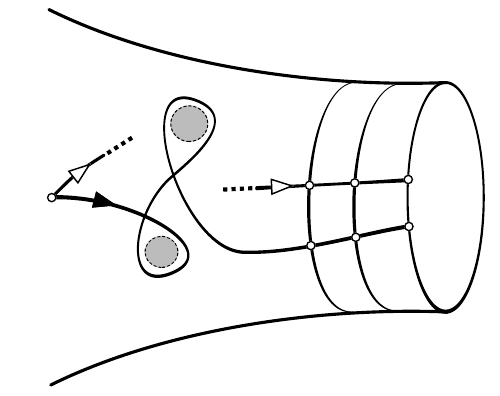}\quad\quad\quad\quad\includegraphics[width=7cm]{Figures/BCylinder1.pdf}}}
\vspace{-24pt}
\end{center}
\caption{The cylinder $U^+$}
\label{fig:cylinder1}
\end{figure}
		
We choose a simple curve $\eta_m$ in the cylinder $U^+$ which intersect each arc of $\alpha\cap U^+$ once and denote its intersections with $e_2$ and $e_2'$ by $p_m$ and $p_m'$ respectively. We denote by $c_m$ the segment in $\eta_m$ going from $p_m$ to $p_m'$ following the orientation induced by $\eta$.

Let $e_3$ (resp. $e_3'$) be the path from $p_+$ to $p_m$ (resp. from $p_+'$ to $p_m$) and $e_4$ (resp. $e_4'$) be the path from $p_m$ to $p$ (resp. from $p_m'$ to $p'$). The curve $\eta_m$ separates $U^+$ into two cylinders, and we denote by $U_3^+$ and $U_4^+$ the cylinders containing $e_3$ and $e_4$ respectively. We use $\eta_3^+$ (resp. $\eta_4^+$) to denote a simple closed curve in $U_3^+$ (resp. $U_4^+$), which intersects each arc of $\alpha$ in $U_3^+$ (resp. $U_4^+$) once.

Notice that the loops
	\[
		c_+\ast e_2'\ast c^{-1}\ast e_2^{-1},\quad c_+\ast e_3'\ast c_m^{-1}\ast e_3^{-1},\quad c_m\ast e_4'\ast c^{-1}\ast e_4^{-1}
	\]
are all homotopically trivial and hence
	\[
		e_1'\ast(e_1\ast c_+)^{-1}\sim e_1'\ast e_3'\ast(e_1\ast e_3\ast c_m)^{-1}\sim \eta^k.
	\]
	
Now we modify the segment $e_3'\ast e_4'$ in the following way. We replace $e_3'$ and $e_4'$ by 
	\[
		e''_3=e_3'\ast\eta_m^{-k}\quad\textrm{and}\quad e''_4=\eta^k_m\ast e'_4.
	\]
respectively (see Figure \ref{fig:cylinder2}).

\begin{figure}[h]
\leavevmode \SetLabels
\L(.34*.43) $e_1$\\%
\L(.34*.63) $e_1'$\\%
\L(.585*.14) $e_3''$\\%
\L(.635*.14) $e_4''$\\%
\L(.30*.52) $q$\\%
\L(.595*.47) {\small $c_m$}\\%
\L(.65*.47) {\small $c$}\\%
\L(.625*.83) $\eta_m$\\%
\endSetLabels
\begin{center}
\AffixLabels{\centerline{\includegraphics[width=7cm]{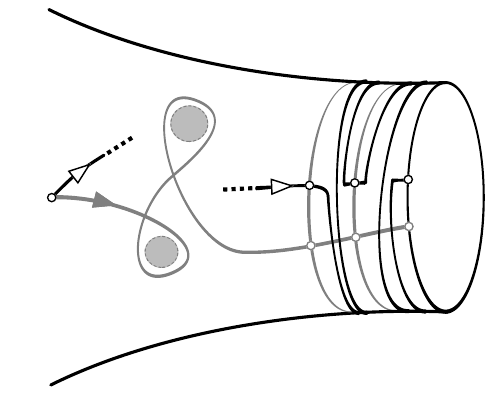}}}
\vspace{-24pt}
\end{center}
\caption{Modifying the representative of the curve}
\label{fig:cylinder2}
\end{figure}
	
This gives
\[
\begin{aligned}
&(e_1\ast e_3\ast c_m)^{-1}\ast e_1'\ast e_3''\\
\sim&(e_1\ast e_3\ast c_m)^{-1}\ast e_1'\ast e_3'\ast\eta_m^{-k}\\
\sim&\eta_m^k\ast\eta_m^{-k}\\
\sim&\{\bullet\}
\end{aligned}
\]
and
\[
\begin{aligned}
&(e_1\ast e_3)^{-1}\ast e_1'\ast e_3''\ast e_4''\ast(e_4\ast c)^{-1}\\
\sim&[(e_1\ast e_3)^{-1}\ast e_1'\ast e_3''\ast c_m^{-1}]\ast [c_m\ast \eta^k_m\ast e_4'\ast(e_4\ast c)^{-1}]\\
\sim&\eta_m^k\sim\eta^k
\end{aligned}
\]

By the above discussion, we have $e_1\ast e_3\ast c_m\sim e_1'\ast e_3''$. We replace $e'$ by $e_1\ast e_3\ast c_m\ast e_4''$ and get a new representative of $\alpha$ (see Figure \ref{fig:cylinder3}).

\begin{figure}[h!]
\leavevmode \SetLabels
\L(.34*.43) $e_1$\\%
\L(.635*.145) $e_4''$\\%
\L(.30*.52) $q$\\%
\L(.595*.465) {\small$c_m$}\\%
\L(.65*.465) {\small$c$}\\%
\L(.62*.82) $\eta_m$\\%
\L(.68*.82) $\eta$\\%
\endSetLabels
\begin{center}
\AffixLabels{\centerline{\includegraphics[width=7cm]{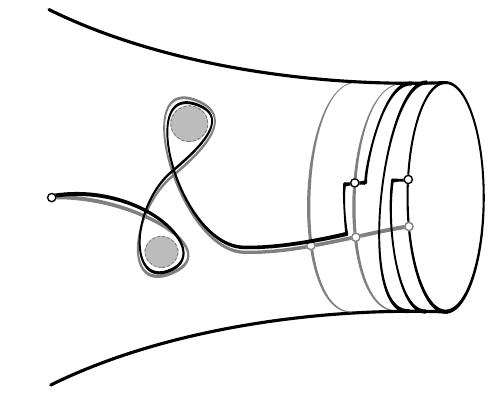}}}
\vspace{-24pt}
\end{center}
\caption{Representing $\alpha$}
\label{fig:cylinder3}
\end{figure}

If we have $e'\ast(e\ast c)\sim\eta^k$ with $k<0$, then the twisting direction of $e_4''$ will be opposite to the orientation of $c_m$. In this case, after we finish the above steps, we perform another homotopy $\overline{c_m}\sim c_m\ast\eta_m^{-1}$ where $\overline{c_m}$ is the segment in $\eta$ going from $p$ to $p'$ against the orientation of $\eta$. Hence the arc $e_4'$ is replaced by $\overline{c_m}\ast \eta_m^{k+1}\ast e_4'$ (see Figure \ref{fig:cylinder4}).
	
\begin{figure}[h!]
\leavevmode \SetLabels
\L(.325*.50) $c_m$\\%
\L(.585*.72) $\overline{c_m}$\\%
\endSetLabels
\begin{center}
\AffixLabels{\centerline{\includegraphics[width=8cm]{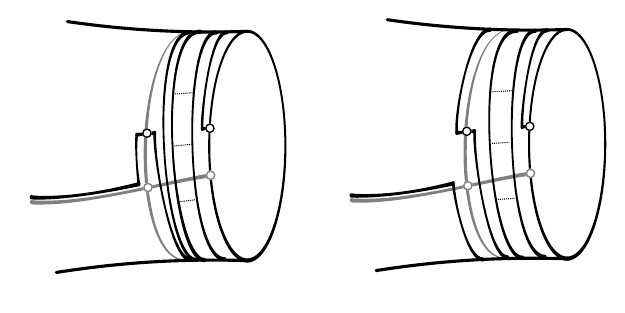}}}
\vspace{-24pt}
\end{center}
\caption{Replacing $e_4'$}
\label{fig:cylinder4}
\end{figure}
	
We denote by $e^\dag$ be the segment that we use to replace $e'$. We consider the segment $h=e_1\cup e_3$ shared by $e$ and $e^\dag$. We extend $h$ in disk neighborhoods of its endpoints $q$ and $p_m$, such that there is no new intersection point appearing, and denote by $h_e$ and $h_{e^\dag}$ the extension of $h$ along $e$ and $e^\dag$ respectively. We would like to count $i(h_e,h_{e^\dag})$.
	
Let $D$ (resp. $D_m$) denote the disk neighborhood of $q$ (resp. $p_m$) with boundary $S$ (resp. $S_m$). We denote by $I=S\setminus h$ and $I_m=S_m\setminus h$. The surface orientation induces an orientation on $S$ and $S_m$, which moreover induces an orientation on $I$ and $I_m$ which we call the positive direction. 
	
We denote by $x$ and $x_m$ (resp. $x^\dag$ and $x_m^\dag$) the intersection between $h_{e}$ (resp. $h_{e^\dag}$) with $I$ and $I_m$ respectively (see Figure \ref{fig:intersection}).
\begin{claim}\label{intersectionnumber}
If the orientation from $x$ to $x^\dag$ and the one from $x_m$ to $x_m^\dag$ have the same sign, then $i(h_e,h_{e^\dag})=i(h,h)+1$;
		
If the orientation from $x$ to $x^\dag$ and the one from $x_m$ to $x_m^\dag$ have different signs, then $i(h_e,h_{e^\dag})=i(h,h)$.
\end{claim}
\begin{remark}
As a convention, when we talk about intersection between two segments, we consider the homotopy relative to the endpoints of the segments, and count only the intersections in the interior of the segments.
\end{remark}

\begin{proof}[Proof of Claim \ref{intersectionnumber}]
We first consider the case where the $h$ is simple. Let $U_h$ be a neighborhood of $h_e\cup h_{e^\dag}$ obtained by taking the union of $D$ and $D_m$ with a well-chosen disk neighborhood $D_h$ of $h$, such that
\begin{itemize}
	\item	$U_h$ is a topological disk;
	\item	$\partial D\cap U_h$ and $\partial D_m\cap U_h$ are intervals;
	\item	the four points $x$, $x_m$, $x^\dag$ and $x_m^\dag$ on the boundary of $U_h$.
\end{itemize}
Then the intersection number is $1$ if and only if the two pairs $(x,x_m)$ and $(x^\dag,x_m^\dag)$ lie on $\partial U_h$ alternatively.

Without loss of generality, we may assume that the orientation from $x^\dag$ to $x$ is always positive. We can consider the decomposition of $U_h$ into $D$ and $D^c=U_h\setminus D$ with disjoint interior. Let $J$ be the common boundary of $D$ and $D^c$. We choose two distinct points $y$ and $y^\dag$ on $J$. Notice that the orientation on $J$ induced by the orientation of $\partial D$ and the one induced by the orientation of $\partial D^c$ are different. Up to homotopy of $D$, we assume that $y$ is in the segment connecting $x$ and $x_m$. By applying the above discussion to $D$ and $D^c$ separately, we may choose $y$ and $y^\dag$ in a way such that the orientation guarantees that $i(h_e,h_{e^\dag})=i(xy,x^\dag y^\dag)$.
		
\begin{figure}[h]
\leavevmode \SetLabels
\L(.13*.85) $x^\dag$\\%
\L(.13*-.03) $x$\\%
\L(.46*.90) $x_m$\\%
\L(.46*-.09) $x_m^\dag$\\%
\L(.19*.64) $y$\\%
\L(.183*.19) $y^\dag$\\%
\L(.16*.57) $q$\\%
\L(.50*.85) $x^\dag$\\%
\L(.50*-.03) $x$\\%
\L(.85*.90) $x_m$\\%
\L(.85*-.09) $x_m^\dag$\\%
\endSetLabels
\begin{center}
\AffixLabels{\centerline{\includegraphics[width=12cm]{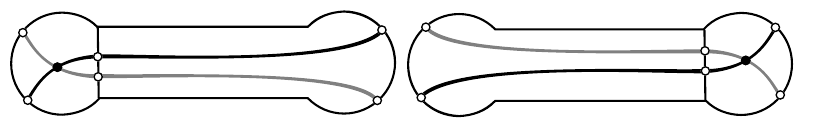}}}
\vspace{-24pt}
\end{center}
\caption{Moving an intersection point}
\label{fig:intersection}
\end{figure}

The case when $h$ has self-intersections will be taken care of by reasoning in the universal cover $\widetilde{\Sigma}$ of the surface $\Sigma$. More precisely, we consider a lift $\widetilde{h}$ of $h$ in $\widetilde{\Sigma}$, then extend it to lift $\widetilde{h_e}$ and $\widetilde{h_{e^\dag}}$ of $h_e$ and $h_{e^\dag}$ respectively. Denote by $\widetilde{D}$ and $\widetilde{D_m}$ the lifts of $D$ and $D_m$ respectively intersecting $\widetilde{h}$.
	
Considering the homotopy relative to the endpoints of $\widetilde{h_e}$ and $\widetilde{h_{e^\dag}}$, we have $i(\widetilde{h_e},\widetilde{h_{e^\dag}})$ equal to $0$ or $1$. If $i(\widetilde{h_e},\widetilde{h_{e^\dag}})=1$, we use a homotopy to get a taut representative by moving the intersection into the lift of $D$ . The part of the lift outside of the lift of $D\cup D_m$ are two disjoint segments connecting the two disks.

Notice that the restriction of the covering map on any lift of $D\cup D_m$ is injective. When we project these two segments to $\Sigma$, we get two homotopic segments (relative to $D$ and $D_m$) setwise, which are also homotopic to $h$. As such, the intersection number between them is equal to $i(h,h)$. This proves Claim \ref{intersectionnumber}.
\end{proof}

\begin{remark}\label{moveq}
The proof of the above claim shows that the intersection point in $D$ can be moved in $D_m$ while retaining minimal intersection. It also tells us that, in order to have the intersection $q$, the orientation among the intersections between $h_e\cup h_{e^\dag}$ with $I$ and $I_m$ should be equal as in the first case of the statement. 
\end{remark}
	
The following is an obvious, but useful, observation which basically says that being globally taut implies being locally taut. 
\begin{claim}\label{tautpath}
Let $\gamma$ be a taut curve. Any pair of segments intersect each other minimally up to homotopy relative to their endpoints.
\end{claim}
\begin{proof}[Proof of Claim \ref{tautpath}]
If not, there are necessarily monogons or bigons that appear, which would also be a part of $\gamma$. This contradicts the fact that $\gamma$ is taut and this proves the claim.
\end{proof}
Using the above two claims, we can conclude that 
\begin{itemize}
\item if $e'\ast(e\ast c)^{-1}\sim \eta^k$ with $k>0$, then the intersection at $q$ is positive counted from the direction of $e$ to that of $e'$; 
\item if $e'\ast(e\ast c)^{-1}\sim \eta^k$ with $k<0$, then the intersection at $q$ is negative counted from the direction of $e$ to that of $e'$;
\item if $e'\ast(e\ast c)^{-1}\sim \{\bullet\}$, then the intersection at $q$ can be either negative or positive.
\end{itemize}

\begin{figure}[h]
\leavevmode \SetLabels
\L(.33*.42) $e^\dag$\\%
\L(.33*.27) $e$\\%
\L(.20*.54) $q$\\%
\L(.65*.42) $e$\\%
\L(.65*.23) $e^\dag$\\%
\L(.53*.54) $q$\\%
\endSetLabels
\begin{center}
\AffixLabels{\centerline{\includegraphics[width=10cm]{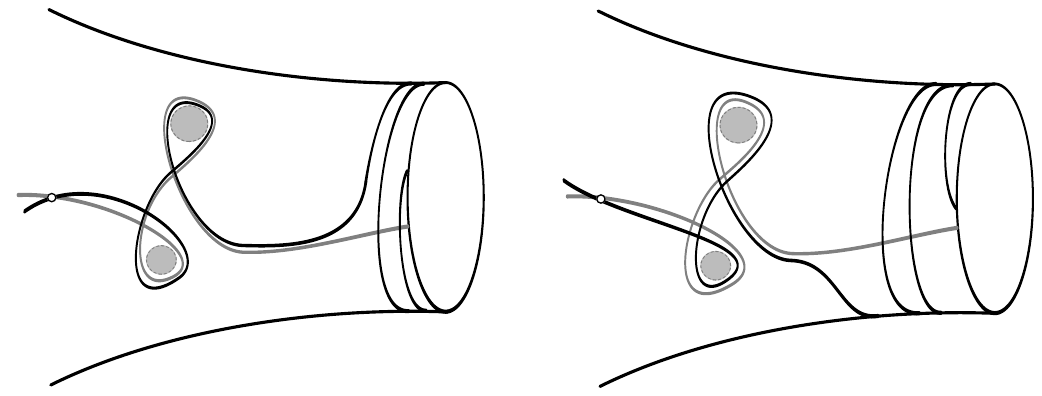}}}
\vspace{-24pt}
\end{center}
\caption{Cylinder 5}
\label{fig:cylinder5}
\end{figure}

To see this, we consider the homotopy that we take to change $e'$ to $e^\dag$. By the discussion in Remark \ref{moveq}, we can move $q$ to a neighborhood of $p_m$. When $k$ is not $0$, its sign is the same as the sign of any point in $e\cap e^\dag$ counting from $e$ to $e^\dag$. We can compare this sign with the sign of $q$. If they are different, we will obtain a bigon where one vertex is $q$. Hence, $q$ is not essential, which contradicts the fact that $\alpha$ is taut.

\medskip

Now we consider the arc $a_1$ and denote by $p$ one of its endpoints on $\eta$.We give $a_1$ the orientation pointing away from $p$. Without loss of generality, we assume that $a_1$ goes into $U^+$ once it passes $p$.

Following the orientation of $a_1$, we assume that there are intersection points $\eta$-admissible from the left and $q$ is the last intersection point on $a_1$. We label all intersection points on $a_1$ which are $\eta$-admissible from the left by $q_0=q, q_1,\dots,q_l$ following the direction from $q$ to $p$, and denote by $e$ the segment in $a_1$ between $p$ and $q$, and $e_i$ the segment in $e$ going from $q_i$ to $p$.
	
For any $q_i$, by the definition of being $\eta$-admissible, there is another segment $e_i'$ connecting $q_i$ to $\eta$ intersecting $\eta$ at $p_i$, such that the sum of the path $e_i'^{-1}\ast e_i$ and a segment of $\eta$ connecting $p$ to $p_i$ form a curve homotopic to a point of $\Sigma$ or a power of $\eta$.

By repeating the above operation, illustrated in Figure \ref{fig:cylinder2} and Figure \ref{fig:cylinder3}, we replace $e_i'$ by a path $e^\dag_i$ from $q_i$ going first along $e_i$, then twisting in $U_4^+$. By the discussion in Remark \ref{moveq}, we can moreover move $q_i$ to a disk neighborhood of $p_m$ in $U$. 

Each operation may move more than one $q_i$ into $U$. We start by doing the operation from $q_0$, then we check the remaining $q_i$s and look for the one in $\Sigma\setminus U$ with the smallest index $i$. Then we perform the operation on it. We repeat this process until it ends; and it will stop, since each $q_i$, once moved into $U$, will not move out of $U$.

We then repeat this process for all arcs in $\alpha\setminus \eta$, and in the end, we obtain a curve $\alpha_1$ homotopic to $\alpha$ which has no intersection points $\eta$-admissible from the left. 

Now consider the self-intersection points of $\alpha$ in $\Sigma\setminus U^+$ which are $\eta$-admissible from the right and the homotopy relative to $U^+$. By a similar operation as in the construction of $\alpha_1$, we obtain a curve $\alpha_2$ homotopic to $\alpha$ with no self-intersection points which are $\eta$-admissible from either the left or the right.

The last step is to perform the homotopy of $\Sigma$ relative to $\eta^+$ and $\eta^-$, or equivalently, perform a homotopy relative to the boundary in $U$ and $\Sigma\setminus U$ separately. We denote the resulting curve by $\alpha_3$. We claim that $\alpha_3$ is taut.

To see why, we consider a pair of self-intersection points of $\alpha_3$, and denote by $c_1$ and $c_2$ two segments of $\alpha_3$ connecting them. If $c_1\cup c_2$ is contained entirely in $U$ or in $\Sigma\setminus U$, it will not be a bigon due to the last step in the above construction. If one intersection point is in $U$ and the other one is in $\Sigma\setminus U$, since one is $\eta$-admissible while the other one is not, the union $c_1\cup c_2$ is not a bigon. If two intersection points are in $\Sigma\setminus U$ while $c_1$ and $c_2$ pass $U$, then $c_1\cup c_2$ is not a bigon, since otherwise the two intersection points are both $\eta$-admissible. Finally, we consider the case where two intersection points are both $\eta$-admissible, and $c_1$ and $c_2$ leave $U$ when connecting them. In this case, the union $c_1\cup c_2$ is also not a bigon. Otherwise, we may reverse the above construction and show that $\alpha$ is not taut which is a contradiction. 

This concludes the proof of Lemma \ref{cylinderlemma}. 
\end{proof}

We call such a neighborhood an \textit{$\alpha$-cylindrical neighborhood} of $\eta$. Similarly, let $M$ be a multicurve $i(M,M)\neq0$. We can define an \textit{$M$-cylindrical neighborhood} of $\eta$ in the same way. Observe that, from a topological point of view, for each pair $M$ and $\eta$, this neighborhood is unique.

\subsection{The proof of Theorem \ref{thm:fromkto0}}
We can now prove Theorem \ref{thm:fromkto0} which states that $k$-equivalence implies $0$-equivalence. 

Let $\alpha$ and $\beta$ be two $k$-equivalent curves. Given a curve $\gamma\in\C_k$, since the action of the mapping class group preserves $\C_k$, the following identity holds.
\begin{lemma}\label{MCG}
	For any $\psi\in\MCG(\Sigma)$, we have
	\[
	i(\alpha,\psi(\gamma))-i(\alpha,\gamma)=i(\beta,\psi(\gamma))-i(\beta,\gamma).
	\]
\end{lemma}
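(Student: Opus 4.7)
The statement as written is essentially a direct consequence of the definition of $k$-equivalence combined with the fact that the mapping class group preserves the stratification of curves by self-intersection number.

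My plan is as follows. The key observation I would invoke is the fact, recalled in the preliminaries, that the action of $\MCG(\Sigma)$ on the set of free homotopy classes of curves preserves intersection numbers, and consequently leaves each set $\C_k$ globally invariant. Thus, for any $\psi \in \MCG(\Sigma)$ and any $\gamma \in \C_k$, the curve $\psi(\gamma)$ again lies in $\C_k$.

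With this in hand, the proof is a two-line verification. Since $\alpha \sim_k \beta$, by the very definition of $k$-equivalence we have
\[
i(\alpha,\gamma) = i(\beta,\gamma)
\]
because $\gamma \in \C_k$, and analogously
\[
i(\alpha,\psi(\gamma)) = i(\beta,\psi(\gamma))
\]
because $\psi(\gamma) \in \C_k$. Subtracting the first identity from the second yields the desired equality
\[
i(\alpha,\psi(\gamma)) - i(\alpha,\gamma) = i(\beta,\psi(\gamma)) - i(\beta,\gamma).
\]

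There is no real obstacle here; the only subtlety worth flagging is the invariance of $\C_k$ under the mapping class group action, which is already established. The content of the lemma is really that both sides are differences of quantities that agree termwise by $k$-equivalence, so the lemma functions as a packaging step whose usefulness will come from specializing $\psi$ to (compositions of) Dehn twists in subsequent arguments.
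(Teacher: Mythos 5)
Your proof is correct and is precisely the argument the paper intends: the lemma is stated immediately after the observation that $\MCG(\Sigma)$ preserves $\C_k$, and its proof is exactly the two-line computation you give, using $k$-equivalence applied to both $\gamma$ and $\psi(\gamma)$.
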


Now we go back to the proof of Theorem \ref{thm:fromkto0}. Consider $\eta$ a simple closed curve. Let $\gamma$ be a $k$-curve intersecting $\eta$ non-trivially. Consider the multicurve
	\[
		M=\{\alpha,\beta,\gamma\}.
	\]
Let $U$ be the $M$-cylindrical neighborhood of $\eta$. Let $a$, $b$ and $c$ be the restriction of $\alpha$, $\beta$ and $\gamma$ to $U$ respectively. Note that $a$, $b$ and $c$ are multiarcs, and may intersect each other. 
	
We denote by $D_\eta$ be the left Dehn twist along $\eta$ for which the cylindric neighborhood of $\eta$ is chosen to be $U$. Up to applying $D_\eta$ on $\gamma$, we assume that all arcs in $c$ intersect arcs in both $a$ and $b$ negatively. 
	
For any $m\in\mathbb{N}$, denote by
	\[
		M_m=\alpha\cup\beta\cup D_\eta^{m}(\gamma).
	\]
Since $U$ is $M$-cylindrical neighborhood, and by a straightforward use of the bigon criterion, the multicurve $M_m$ is still taut with $U$ its cylindrical neighborhood. 
	
As in Lemma \ref{lem:arcs}, for $m=1$, we have:
\[
	\begin{aligned}
		i(\alpha,D_\eta(\gamma))-i(\alpha,\gamma)&=i(a,\eta)i(c,\eta),\\
		i(\beta,D_\eta(\gamma))-i(\beta,\gamma)&=i(b,\eta)i(c,\eta).
	\end{aligned}
\]
By Lemma \ref{MCG}, we have
	\[
		i(a,\eta)i(c,\eta)=i(b,\eta)i(c,\eta),
	\]
since
	\[
		i(c,\eta)=i(\gamma,\eta)\neq0,
	\]
we have the identity
	\[
		i(a,\eta)=i(b,\eta),
	\]
which moreover induces
	\[
		i(\alpha,\eta)=i(\beta,\eta).
	\]
As the above holds for any simple closed curve $\eta$, this completes the proof of Theorem \ref{thm:fromkto0}. 
\section{Pairs of non-homotopic curves}\label{sec:pants}
Although for any integer $k>0$, two $k$-equivalent curves cannot be distinguished by their intersections with simple curves, it is still possible to tell them apart by considering their intersection with other curves. 

This can be roughly understood with the help of hyperbolic geometry. The Birman-Series theorem \cite{Birman-Series85} tells use there is lot of space in the complement of the union of closed geodesics with bounded self-intersection number in a hyperbolic surface. On the other hand, a random geodesic goes everywhere on a hyperbolic surface. Hence, to be able to see everywhere on the surface, we have to consider all closed geodesics, and in particular those with all possible self-intersection numbers.

In this section, we prove that distinct curves on $\Sigma$ are not $k$-equivalent for infinitely many integers $k$, as stated in Theorem \ref{thm:otal} in the introduction. The main technical tool used in this section what we call the "pants lemma" which can be considered as an enhanced version of Lemma \ref{cylinderlemma}. We introduce and prove this result in the following subsection.

\subsection{The pants lemma}

We begin with an informal description of the pants lemma. Given a closed curve and a pants decomposition, you can look at the arcs formed by the curves on the individual pairs of pants. Sometimes these arcs are simple, sometimes they have self-intersection, and sometimes they pairwise intersect. The lemma states that, given a curve, there exists a pants decomposition such that the arcs we see on the pants are all simple and pairwise disjoint. That means that any self intersection points of the curve come from pasting conditions of these arcs in cylinder neighborhoods of the pants curves. Moreover, the arcs in each cylinder will intersect each other at most once.

\begin{figure}[h]
\leavevmode \SetLabels
\endSetLabels
\begin{center}
\AffixLabels{\centerline{\includegraphics[width=12cm]{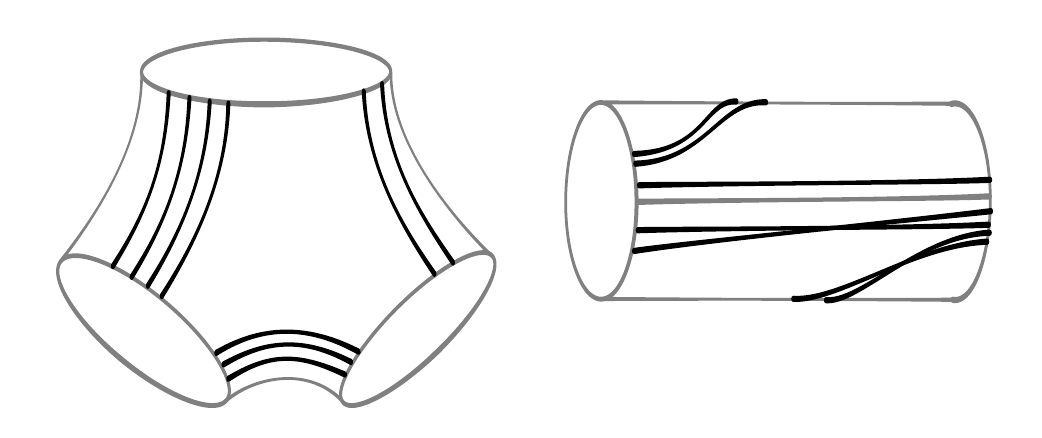}}}
\vspace{-24pt}
\end{center}
\caption{An illustration of Lemma \ref{lem:pants}}
\label{fig:pantslemma}
\end{figure}
A more formal statement is as follows:
\begin{lemma}[The pants lemma]\label{lem:pants}
For any closed curve $\alpha$, there exists a pants decomposition $P=\{\eta_1,\hdots,\eta_{3g-3}\}$ of $\Sigma$, and taut representatives of $\alpha$ and $P$ such that for $i=1,\hdots,3g-3$ there are cylindric neighborhoods $U_i$ of $\eta_i$ satisfying:
\begin{enumerate}
\item For $i\neq j$, $U_i$ and $U_j$ are disjoint.
\item For each $i$, there is a non-trivial arc of $U_i$ disjoint from $\alpha\cap U_i$.
\item All connected components of $\alpha\cap(\Sigma\setminus \cup U_i)$ are pairwise disjoint, simple and connecting distinct boundary components of a pair of pants.
\end{enumerate}
\end{lemma}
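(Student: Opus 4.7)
The plan is to build the pants decomposition $P = \{\eta_1,\dots,\eta_{3g-3}\}$ inductively, combining two operations: choosing each pants curve based on the structure of $\alpha$, and iteratively applying the cylinder lemma (Lemma~\ref{cylinderlemma}) to absorb self-intersection points of $\alpha$ into cylindric neighborhoods of the pants curves. If $\alpha$ is simple, one takes any pants decomposition containing $\alpha$ and shrinks the neighborhoods $U_i$ to be small and pairwise disjoint; conditions (2) and (3) are then immediate, and the single "problematic" pants curve carrying $\alpha$ can be arranged so that $\alpha$ runs inside its cylindrical neighborhood.

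In the non-simple case, I would proceed inductively. At step $k+1$, assume $\eta_1,\dots,\eta_k$ have been chosen with pairwise disjoint $\alpha$-cylindrical neighborhoods $U_1,\dots,U_k$ that already contain all self-intersections of $\alpha$ absorbed so far. Pick a remaining self-intersection $q$ of $\alpha$ in $\Sigma\setminus\bigcup_{i\leq k}U_i$, and select $\eta_{k+1}$, disjoint from the previous $\eta_i$, extendable to a pants decomposition, and such that $q$ is $\eta_{k+1}$-admissible in the sense introduced in Section~\ref{sec:kto0}. Such an $\eta_{k+1}$ exists because the two local strands of $\alpha$ at $q$ cobound a region whose frontier, after surgery with the boundaries of the surface cut along $\eta_1,\dots,\eta_k$, produces an admissibility witness. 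Applying Lemma~\ref{cylinderlemma} then yields a new taut representative of $\alpha$ together with an $\alpha$-cylindrical neighborhood $U_{k+1}$ absorbing $q$ and every other $\eta_{k+1}$-admissible self-intersection. Crucially, the cylinder lemma's homotopy is supported in a neighborhood of $\eta_{k+1}$, so the earlier $U_i$ are undisturbed and can remain disjoint from $U_{k+1}$. The induction terminates once every self-intersection of $\alpha$ is absorbed, and one completes to a full pants decomposition by adding any remaining pants curves disjoint from the simple arcs of $\alpha$ outside $\bigcup U_i$. Condition (2) is then ensured by choosing each $U_i$ small enough that its boundary stays off the self-intersection points.

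The main obstacle is condition (3): outside $\bigcup U_i$, the arcs of $\alpha$ in each pair of pants $Y$ must be simple, pairwise disjoint, and connect distinct boundary components of $Y$. Simplicity is automatic once self-crossings lie inside cylinders, and mutual intersections between arcs of $\alpha$ in $Y$ are again self-intersections of $\alpha$, so they are handled by the same absorption step. The genuine difficulty is that an arc of $\alpha$ may return to the same boundary component of $Y$, necessarily wrapping around a second boundary component. I would fix this by incorporating a \emph{flip} into the construction: replace the wrapped-around pants curve by one homotopic to the returning arc closed up with a segment of $\partial Y$, so that in the modified decomposition the arc now joins distinct boundary components. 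The technical heart of the argument is then weaving this flip operation into the inductive choice of the $\eta_{k+1}$, so that each newly chosen pants curve already has the "correct" form to forbid return arcs, while remaining compatible with the admissibility of the next self-intersection to be absorbed and with the previously placed cylindrical neighborhoods.
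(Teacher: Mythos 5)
Your proposal takes a genuinely different route from the paper, and it has gaps that I do not see how to fill.

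The paper does not construct $P$ inductively. It fixes an arbitrary pants decomposition $P$, considers a dual graph $\Gamma$ of $P$, takes $\Sigma^+$ a regular neighborhood of $\Gamma$ and $\Sigma^-$ its complement, and lets $\{\zeta_1,\dots,\zeta_m\}$ be the boundary curves of $\Sigma^\pm$. It then applies the multitwist $\psi = D_{\zeta_1}\circ\cdots\circ D_{\zeta_m}$ (suitably premodified by twists along the $\eta_i$ to kill bigons and ensure all intersections with the $\zeta_j$ have a consistent sign), and then uses the cylinder lemma to sweep the admissible self-intersections into cylinders. The pants decomposition returned is $\psi^{-1}(P)$. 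The multitwist is the key idea: after applying it, every arc of $\alpha$ crosses each pair of pants essentially "straight through," which forces the arcs to be simple, pairwise disjoint, and to join distinct boundary components. Your proposal has no analogue of this global twisting step, and this is where it runs into trouble.

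Concretely, there are three gaps. First, you assert that for any remaining self-intersection $q$ one can always find a pants curve $\eta_{k+1}$, disjoint from the previously placed $\eta_1,\dots,\eta_k$ and extendable to a pants decomposition, such that $q$ is $\eta_{k+1}$-admissible. Admissibility is a real constraint: it requires that one of the four arcs $b_1,\dots,b_4$ (or the loop $b$) that $q$ cuts out in $\Sigma\setminus\eta_{k+1}$ be non-essential. For a generic simple curve $\eta$ this fails, and your sentence about "cobounding a region whose frontier\dots produces an admissibility witness" is not an argument -- the loops of $\alpha$ at $q$ need not be simple, so they do not directly yield a candidate pants curve, and even when a candidate exists it must be reconciled with disjointness from the earlier $\eta_i$ and completability to a pants decomposition. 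Second, your claim that the cylinder lemma's homotopy "is supported in a neighborhood of $\eta_{k+1}$" is false: the homotopy replaces a segment $e'$ by a segment $e^\dag$ that runs along $e$ all the way from $q$ down to the cylinder and then twists, so it sweeps through whatever part of the surface lies between $e'$ and $e$, which may well meet the earlier $U_i$ and change the configuration of $\alpha$ inside them. Third, you correctly identify that arcs of $\alpha$ returning to the same boundary component of a pair of pants are the real obstacle to condition (3), and you propose a "flip" to fix it, but a flip replaces a pants curve and therefore changes the $\eta_i$-admissibility picture for the self-intersections already absorbed; you acknowledge this interaction but do not resolve it, and it is precisely the part of the argument that is hard. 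The paper's multitwist $\psi$ handles this issue uniformly for all pairs of pants at once, with no need to revise earlier choices, which is why the paper's route goes through and this one, as written, does not.
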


\begin{proof}
Consider a pair of pants decomposition
	\[
		P=\{\eta_1,\dots,\eta_{3g-3}\}
	\]
of $\Sigma$.

\begin{figure}[h]
\leavevmode \SetLabels
\endSetLabels
\begin{center}
\AffixLabels{\centerline{\includegraphics[width=12cm]{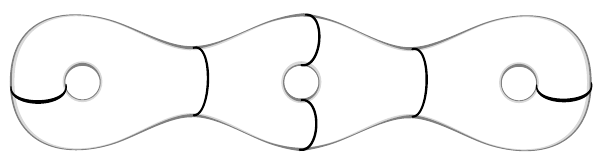}}}
\vspace{-24pt}
\end{center}
\caption{A pants decomposition of a genus $3$ surface}
\label{fig:genus3-1}
\end{figure}
Let $\Gamma$ be a dual graph of $P$ embedded in $\Sigma$. Let $\Sigma^+$ denote its neighborhood which can retract to it. We denote by $\Sigma^-$ its complement in $\Sigma$. We choose $V_i$ a cylindric neighborhood of $\eta_i$, such that for $i\neq j$, $V_i\cap V_j=\emptyset$. The complement of
	\[
		\bigcup_{i=1}^{3g-3}V_i
	\]
in $\Sigma$ is a collection of pair of pants denoted by 
	\[
		\{Y_1\dots,Y_{2g-2}\}.
	\]
In each pair of pants $Y_i$, we denote by $H^+_i$ and $H^-_i$ its intersection with $\Sigma^+$ and $\Sigma^-$ respectively.

\begin{figure}[h]
\leavevmode \SetLabels
\L(.40*.75) $\Sigma^+$\\%
\L(.77*.90) $\Sigma^-$\\%
\endSetLabels
\begin{center}
\AffixLabels{\centerline{\includegraphics[width=12cm]{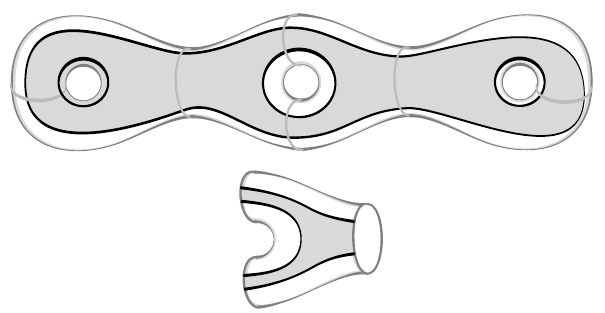}}}
\vspace{-24pt}
\end{center}
\caption{Cutting pants into hexagons}
\label{fig:genus3-cutinhalf}
\end{figure}

\begin{figure}[h]
\leavevmode \SetLabels
\endSetLabels
\begin{center}
\AffixLabels{\centerline{\includegraphics[width=12cm]{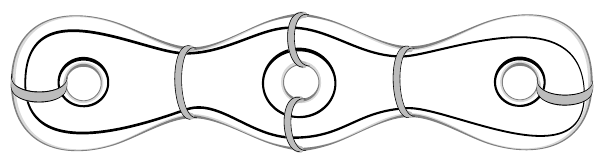}}}
\vspace{-24pt}
\end{center}
\caption{Cylinders around pants curves}
\label{fig:genus3-cylindrical}
\end{figure}

The two subsurface $\Sigma^+$ and $\Sigma^-$ share the same boundary whose connected components form a multicurve. We denote it by
	\[
		\{\zeta_1,\dots,\zeta_m\}.
	\]
Since they are disjoint, their associated Dehn twists commute with each other. We denote the composition of their associated left Dehn twists by 
\[
	\psi=D_{\zeta_1}\circ\cdots\circ D_{\zeta_m}.
\]

We assume that $\alpha$ intersects $\partial V_i$ and the $\zeta_i$s minimally and
	\[
		(\alpha\cap\partial V_i)\cap(\alpha\cap\zeta_j)=\emptyset,
	\]
for all possible $i$ and $j$. 
	
By cutting $\Sigma$ along curves in the $\partial V_i$s and the $\zeta_j$s, we cut $\alpha$ into segments. The boundary of each $Y_i$ is cut into $6$ connected components. By considering where the endpoints of the segment $\alpha\cap Y_i$ lands in these 6 boundary connected components, we list all possible topological types of the segment $\alpha\cap Y_i$, as well as the corresponding types of their $\psi$-images.

\begin{figure}[h]
\leavevmode \SetLabels
\endSetLabels
\begin{center}
\AffixLabels{\centerline{\includegraphics[width=18cm]{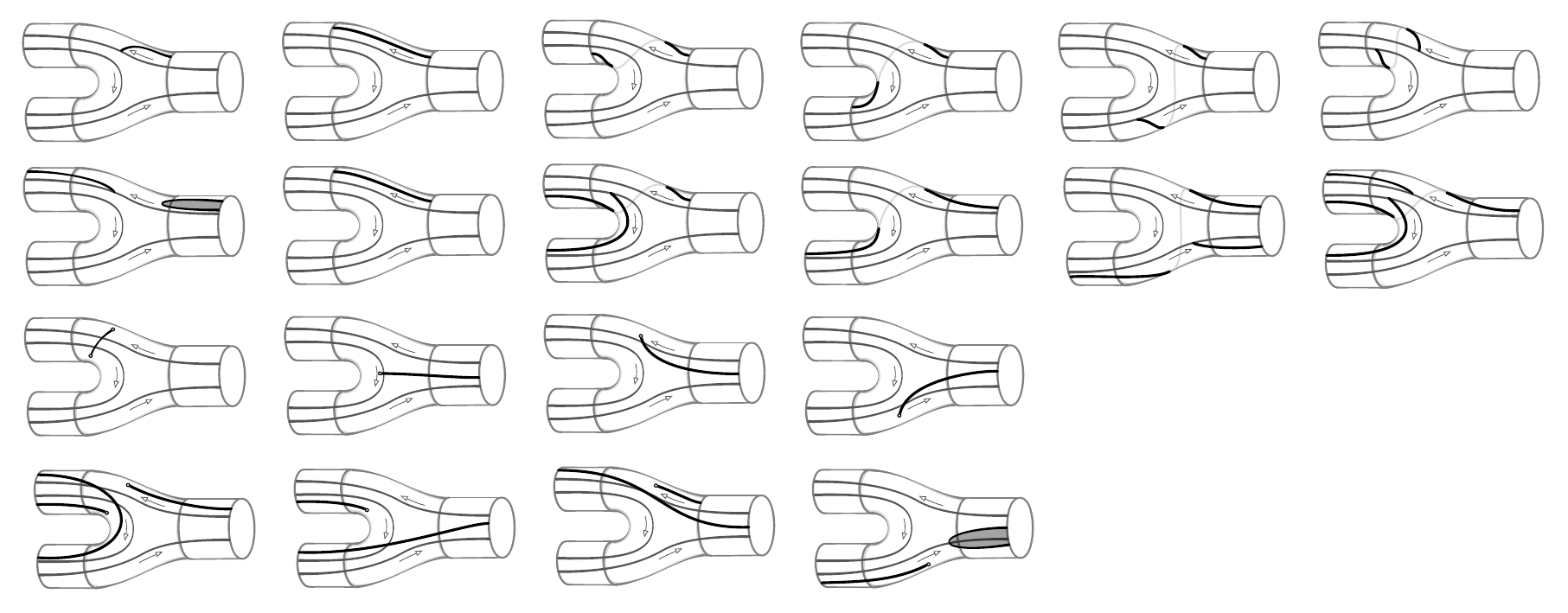}}}
\vspace{-24pt}
\end{center}
\caption{The topological cases for arcs on a pair of pants}
\label{fig:TopoCasesPants}
\end{figure}

Similarly, for each $V_i$, we list all possible topological types of the segment $\alpha\cap V_i$, as well as the corresponding types of their $\psi$-images.

\begin{figure}[h]
\leavevmode \SetLabels
\endSetLabels
\begin{center}
\AffixLabels{\centerline{\includegraphics[width=18cm]{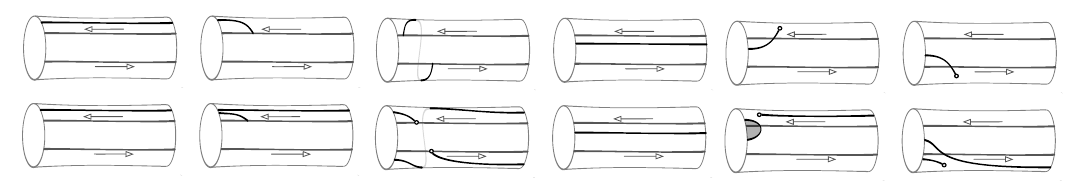}}}
\vspace{-24pt}
\end{center}
\caption{The topological cases for arcs on a cylinder}
\label{fig:TopoCasesCylinder}
\end{figure}

There are two issues that we have to consider. The first is that the $\psi$-image of $\alpha\cap Y_i$ or $\alpha\cap V_i$ may form a bigon with the $\eta_i$s. Secondly the segment $\alpha\cap Y_i$ or its image may not entirely contained in $H_i^+$.

To solve the first issue, instead of directly apply $\psi$, we start by right twisting $\alpha$ along pants curves $\eta_i$s for some times, so that the intersections $\alpha$ with the $\zeta_j$s are all positive. Using Lemma \ref{lem:arcs}, the $\psi$-image of $\alpha\cap Y_i$ or $\alpha\cap V_i$ cannot form a bigon with the $\eta_i$s any more. From now on, we assume that $\psi$ is modified by Dehn twists along the $\eta_i$s so that the first issue is solved.

To solve the second issue, notice that for each $i$, the $\psi$-image $\alpha\cap Y_i$ passes $Y_i$ from one boundary to a different one. Hence we may move it into $\Sigma^+$ and the cost is that there may be additional twists in the neighbor cylinder part.

\medskip

Now we would like to show that the pants decomposition $\psi^{-1}(P)$ is the one that we are looking for. The proof is essentially the same as in Lemma \ref{cylinderlemma}. The only ambiguity is that a intersection point of $\alpha$ may be admissible to several pants curves. In that case, we choose one once for all to which cylinder we would like to move that admissible intersection point. 
\end{proof}
\begin{remark}
As for Lemma \ref{cylinderlemma}, the pants lemma also applies to a multicurve. Any pants decomposition that satisfies the pants lemma for a multicurve $M$ is called a \textit{$M$-cylindrical pants decomposition}. \end{remark}

\subsection{Intersections identify curves}\label{IIC}
This part is dedicated to the proof of Theorem \ref{thm:otal} which recall below:

\begin{theorem-otal}
	Any pair of distinct curves are not $k$-equivalent for infinitely many $k$.
\end{theorem-otal}
\begin{proof}
Let $\alpha$ and $\beta$ be distinct curves. Let $M=\{\alpha,\beta\}$. By the pants lemma, we can find a $M$-cylindrical pants decomposition $P$. We use the same notation as in the previous section. Let $\eta_i$s denote the pant curves and $U_i$s denote the cylindric neighborhood of the $\eta_i$s. Let $\Sigma^+$ denote a subsurface of $\Sigma$ which can contract to a dual graph of $P$ and $\Sigma^-$ be its complementary. Let the curves of the boundary of $\Sigma^+$ (as well as $\Sigma^-$) be denoted by $\zeta_i$. 

By the property of $P$, the intersection between $M$ and $\Sigma^-$, if it exists, are arcs homotopic to the intersection between $\eta_i$s and $\Sigma^-$. Given any curve $\gamma\subset\Sigma^-$, we have
	\[
		i(\gamma,\alpha\cup\beta)=i(\gamma,(\alpha\cup\beta)\cap\Sigma^-).
	\]
This is due to the fact that all arcs in $(\alpha\cup\beta)\cap\Sigma^-$ intersect the $\zeta_i$s negatively. By the bigon criterion, we have equality. Thus we can consider the problem in two cases, depending on whether $\alpha\cap \Sigma^-$ and $\beta\cap\Sigma^-$ are the same as multiarcs in $\Sigma^-$.

\begin{figure}[h]
\leavevmode \SetLabels
\endSetLabels
\begin{center}
\AffixLabels{\centerline{\includegraphics[width=12cm]{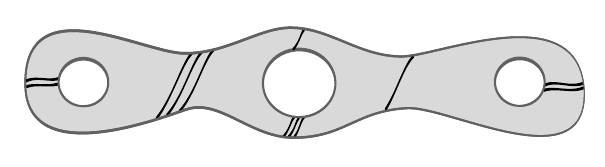}}}
\vspace{-24pt}
\end{center}
\caption{A curve seen on $\Sigma^{-}$}
\label{fig:genus3-onehalf}
\end{figure}

{\bf Case 1.} $\alpha\cap\Sigma^-$ and $\beta\cap\Sigma^-$ are different as multiarcs in $\Sigma^-$.

The key tool used in the proof for this case is the rigidity result for measured hexagon decomposition of bordered surface, whose construction is inspired by the construction of measure foliations as a generalization of simple curves. Using this result, we find a simple curve $\gamma$ in $\Sigma^-$ which intersects $\alpha$ and $\beta$ differently. Details are provided in Appendix \ref{app:hexagon}. 

Before going further, we point out that we are going to construct curves and it will be convenient to consider all curves as oriented objects. Since $\gamma$ is a simple curve in $\Sigma^-$, there is a boundary component $\zeta_i$ which is not homotopic to $\gamma$. Connecting $\gamma$ with $\eta_i$ by a simple curve disjoint from them, we may construct a new curve $\gamma^k\zeta_i$ by first going along $\gamma$ for $k>0$ times, then going along the chosen arc to $\zeta_i$ and following $\zeta_i$ once. Assume $\gamma\zeta_i$ is a $1$-curve. Then for any $k\in\mathbb{N}$, the curve $\gamma^k\zeta_i$ is a $k$-curve.

The proof in Appendix \ref{app:hexagon} shows that for any integer $k>0$, we have
	\[
		i(\alpha,\gamma^k\zeta_i)=\max\{ki(\alpha,\gamma)+i(\alpha,\zeta_i),(k-1)i(\alpha,\gamma)+i(\alpha,\gamma^{-1}\zeta_i)\}.
	\]
where $\gamma^{-1}\zeta_i$ is obtained by first going along the reverse of $\gamma$, then following the chosen arc to $\zeta_i$ and going along $\zeta_i$ once. Since $\gamma\zeta_i$ is $1$-curve, the curve $\gamma^{-1}\zeta_i$ is simple. Hence there exists a constant $m_0\in\mathbb{N}$, such that
	\[
		\begin{aligned}
			|i(\alpha,\gamma^k\zeta_i)-ki(\alpha,\gamma)|&\le m_0,\\
			|i(\beta,\gamma^k\zeta_i)-ki(\beta,\gamma)|&\le m_0.
		\end{aligned}
	\]

By hypothesis we have
	\[
		i(\alpha,\gamma)\neq i(\beta,\gamma),
	\]
and hence there exists an integer $k_0>0$, such that for any integer $k>k_0$, we have
	\[
		i(\alpha,\gamma^k\zeta_i)\neq i(\beta,\gamma^k\zeta_i).
	\]
Therefore, for any integer $k>k_0$, the curves $\alpha$ and $\beta$ are not $k$-equivalent. This concludes the proof in this first case.

{\bf Case 2.} $\alpha\cap\Sigma^-$ and $\beta\cap\Sigma^-$ are the same as multiarcs in $\Sigma^-$

Now suppose that all curves in $\Sigma^-$ intersect $\alpha\cap\Sigma^-$ the same number of times as $\beta\cap\Sigma^-$. We look at $\Sigma^+$; since $\alpha$ and $\beta$ are not homotopic to each other, there exists a cylinder $U_i$ such that at least one of the following holds:
	\begin{enumerate}
		\item $\alpha\cap U_i$ and $\beta\cap U_i$ have a different number of arcs,
		\item $\alpha\cap U_i$ and $\beta\cap U_i$ have the same number of arcs, but with different configurations of self-intersections.
	\end{enumerate}
We analyze each case.
\paragraph{Case 2.1. $\alpha\cap U_i$ and $\beta\cap U_i$ have a different number of arcs.}
This case can be treated in a similar fashion to the previous one. The number of arcs in $\alpha\cap U_i$ and $\beta\cap U_i$ are the intersection numbers $i(\alpha,\eta_i)$ and $i(\beta,\eta_i)$. Hence in this case, we have
	\[
		i(\alpha,\eta_i)\neq i(\beta,\eta_i).
	\]
	
We consider an arc $a$ in $U_i$. Up to applying the Dehn twist $D_{\eta_i}$ to the cylinder enough times, we may assume that $a$ intersects all arcs in $\alpha\cap U_i$ and $\beta\cap U_i$ negatively. We then have:
	\[
		\begin{aligned}
			i(D_{\eta_i}^{k+1}(a),\alpha\cap U_i)-i(D_{\eta_i}^{k}(a),\alpha\cap U_i)&=i(\eta_i,\alpha)\\
			i(D_{\eta_i}^{k+1}(a),\beta\cap U_i)-i(D_{\eta_i}^{k}(a),\beta\cap U_i)&=i(\eta_i,\beta)
		\end{aligned}
	\]
Hence we have
	\[
		i(D_{\eta_i}^{k}(a),\alpha\cap U_i)\neq i(D_{\eta_i}^{k}(a),\beta\cap U_i).
	\]

Now we connect the two ends of $a$ by a path $b$ passing this cylinder, such that $a\cup b$ is a curve with intersection number $2$ with $\eta_i$ (see Figure \ref{fig:twisting}). We denote this curve by $\gamma$. By replacing $a$ by $D^k_{\eta_i}(a)$, we obtain a new curve denoted by $\gamma_k$, which is a $k$ curve.

Assume that $b$ intersects $\alpha$ minimally. Consider the intersection number $i(\alpha,\gamma_k)$, and we have
	\[
		i(\alpha,\gamma_k)\le i(\alpha,b)+i(\alpha, D^k_{\eta_i}(a)).
	\]
Notice that the inequality holds only if there is an $\eta_i$-admissible intersection in $\alpha\cap b$. Hence there exits a positive constant $m_0$, such that
	\[
		|i(\alpha,\gamma_k)-i(\alpha, D^k_{\eta_i}(a))|\le m_0.
	\]
Similarly, we have a positive constant $m_1$ such that
	\[
		|i(\beta,\gamma_k)-i(\beta, D^k_{\eta_i}(a))|\le m_1.
	\]
Therefore, we have a positive integer $k_0$ such that, for any $k\ge k_0$, we have
	\[
		i(\alpha,\gamma_k)\neq i(\beta,\gamma_k).
	\]
\begin{figure}[h!]
\leavevmode \SetLabels
\endSetLabels
\begin{center}
\AffixLabels{\centerline{\includegraphics[width=8cm]{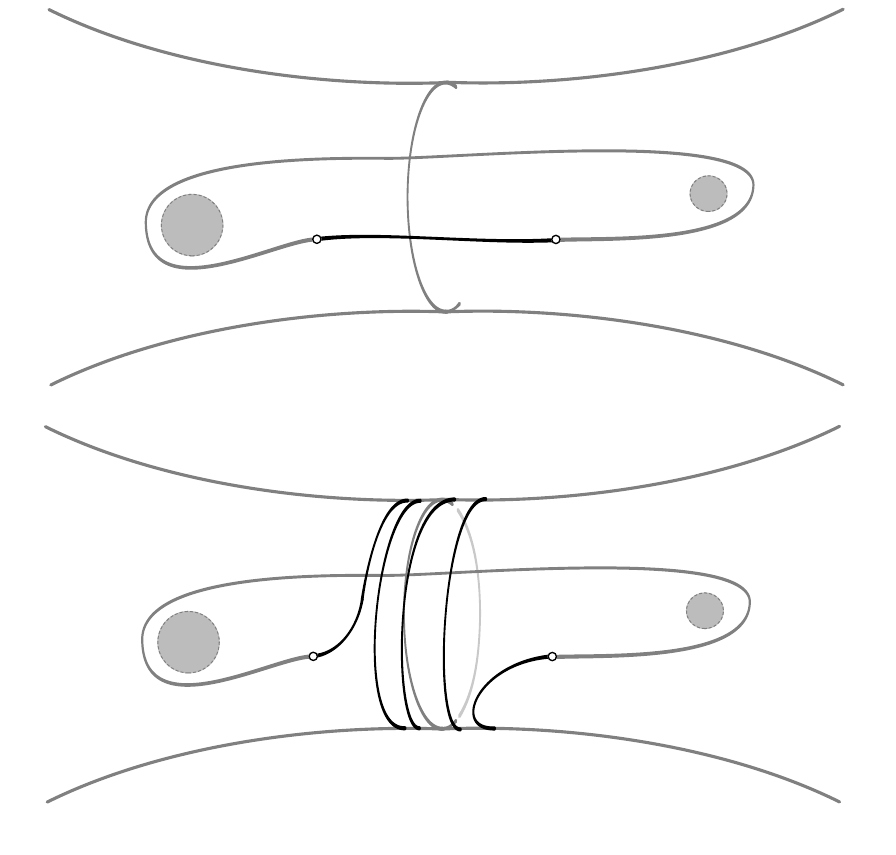}}}
\vspace{-24pt}
\end{center}
\caption{Twisting an arc to add intersection}
\label{fig:twisting}
\end{figure}

\paragraph{Case 2.2. $\alpha\cap U_i$ and $\beta\cap U_i$ have the same number of arcs, but with different configurations of arcs.}
The construction of curves in this case still starts from an arc in $U_i$. In this case, $\alpha\cap U_i$ and $\beta\cap U_i$ have the same number of arcs or, equivalently,
	\[
		i(\alpha,\eta_i)=i(\beta,\eta_i).
	\]
Now we would like to be more precise about the difference between the configurations of $\alpha\cap U_i$ and $\beta\cap U_i$. For each $i$, we choose an orientation on $\eta_i$ and can consider the left side or the right side of $\eta_i$. We denote by $U_i^+$ (resp. $U_i^-$) the part of $U_i$ on the left (resp. right) of $\eta_i$ and denote its boundary different from $\eta_i$ by $\eta_i^+$ (resp. $\eta_i^-$).

Notice that the intersection point could move from a cylinder to another cylinder. Let $a_j$ and $a_l$ be two arcs of $\alpha\cap U_i$ intersecting at the point $p\in U_i$. We extend $a_j$ and $a_l$ from the same side of $U_i$. We say they are parallel until step $m\in\mathbb{N}$ if they have extensions passing successively through a same sequence of $m$ subsurfaces in the pants-cylinder decomposition, and when connecting the endpoints of their extension with a segment in the last subsurface where they arrived, this segment with the the extensions of the two arcs form a homotopically trivial curve. Then the intersection point $p$ can be moved via homotopy to any cylinder where this pair of parallel extensions of $a_j$ and $a_l$ passed through.

\begin{figure}[h]
	\leavevmode \SetLabels
	\endSetLabels
	\begin{center}
		\AffixLabels{\centerline{\includegraphics[width=8cm]{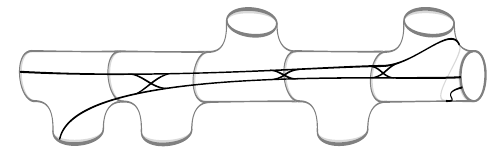}}}
		\vspace{-24pt}
	\end{center}
	\caption{Different crossing points give the same curve}
	\label{fig:crosses}
\end{figure}

Using our previous terminology, when this intersection point $p$ is admissible with respect to multiple $\eta_i$s in $P$, we can move $p$ to any cylinder associated to one of these curves in $P$.

When we try to compare $\alpha\cap U_i$ and $\beta\cap U_i$, we will first perform a homotopy to move all self-intersections of $\alpha$ and those of $\beta$ which are admissible to $\eta_i$ to $U_i$. Moreover, if an intersection point is $\eta_i$-admissible from both sides of $\eta_i$, then we move it to $U_i^+$. Then, since $\alpha$ and $\beta$ are not homotopic to each other, there must be some index $i$ such that, when comparing as above, $\alpha\cap U_i$ and $\beta\cap U_i$ are not homotopic to each other (with endpoints of arcs moving along $\partial U_i\cap \Sigma^+$ in the homotopy). We will consider this $U_i$ in what comes next.

The intersection $\Sigma^+\cap U_i$ is a topological quadrilateral $Q$. Let $v,v'\in \eta_i^+$ and $w,w'\in\eta_i^-$ be the vertices of $Q$ such that $v,v',w',w$ follows the clockwise cyclic order on $\partial Q$. We denote the intersection points in $\alpha\cap \eta_i^+$ by $v_1,...,v_n$ following the orientation from $v$ to $v'$, and denote the intersection points in $\alpha\cap \eta_i^-$ by $w_1,...,w_n$ following the orientation from $w$ to $w'$. To compare $\alpha\cap U_i$ and $\beta\cap U_i$, by taking a homotopy in $U_i$ (allowing endpoints of arcs move along $\partial U_i\cap \Sigma^+$), we may assume that arcs in $\beta\cap U_i$ also connect the $v_i$s to the $w_i$s. By hypothesis, $\alpha\cap U_i$ is not homotopic to $\beta\cap U_i$ relative to $\partial U_i$.

Now we would like to construct a curve which intersects $\alpha$ and $\beta$ differently. We start by considering the arc $a$ intersecting $vv_1$ and $w_1w_2$ which intersects $\alpha\cap U_i$ and $\beta\cap U_i$ negatively and with the same intersection number.

\begin{figure}[h]
	\leavevmode \SetLabels
	\L(.41*.9) $v'$\\%
	\L(.395*.25) $v$\\%
	\L(.59*.9) $w'$\\%
	\L(.605*.25) $w$\\%
	\endSetLabels
	\begin{center}
		\AffixLabels{\centerline{\includegraphics[width=4cm]{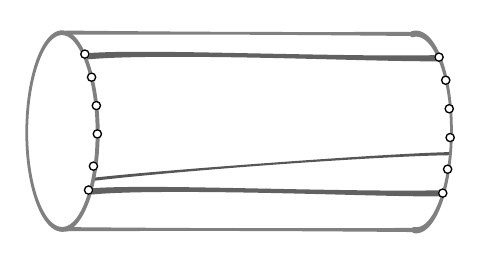}}}
		\vspace{-24pt}
	\end{center}
	\caption{The cylinder $U_i$}
	\label{fig:cylindersimple}
\end{figure}

Here, for each of the two curves, the extension from $w_1$ and $w_2$ will be separated by topology eventually. At the same time, the extensions of $\alpha$ and $\beta$ will also be separated, otherwise they would be the same curve. 

\begin{figure}[h!]
	\leavevmode \SetLabels
	\endSetLabels
	\begin{center}
		\AffixLabels{\centerline{\includegraphics[width=7cm]{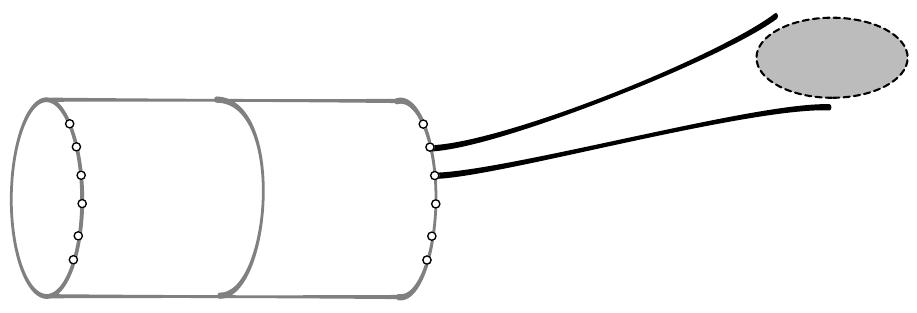}}}
		\vspace{-24pt}
	\end{center}
	\caption{Arcs are separated by topology eventually}
	\label{fig:cylindertop}
\end{figure}

Figure \ref{fig:cylindermultiple} only shows 4 cases, and there are 4 other cases given by exchanging $\alpha$ and $\beta$.

\begin{figure}[h!]
\leavevmode \SetLabels
\endSetLabels
\begin{center}
\AffixLabels{\centerline{\includegraphics[width=9cm]{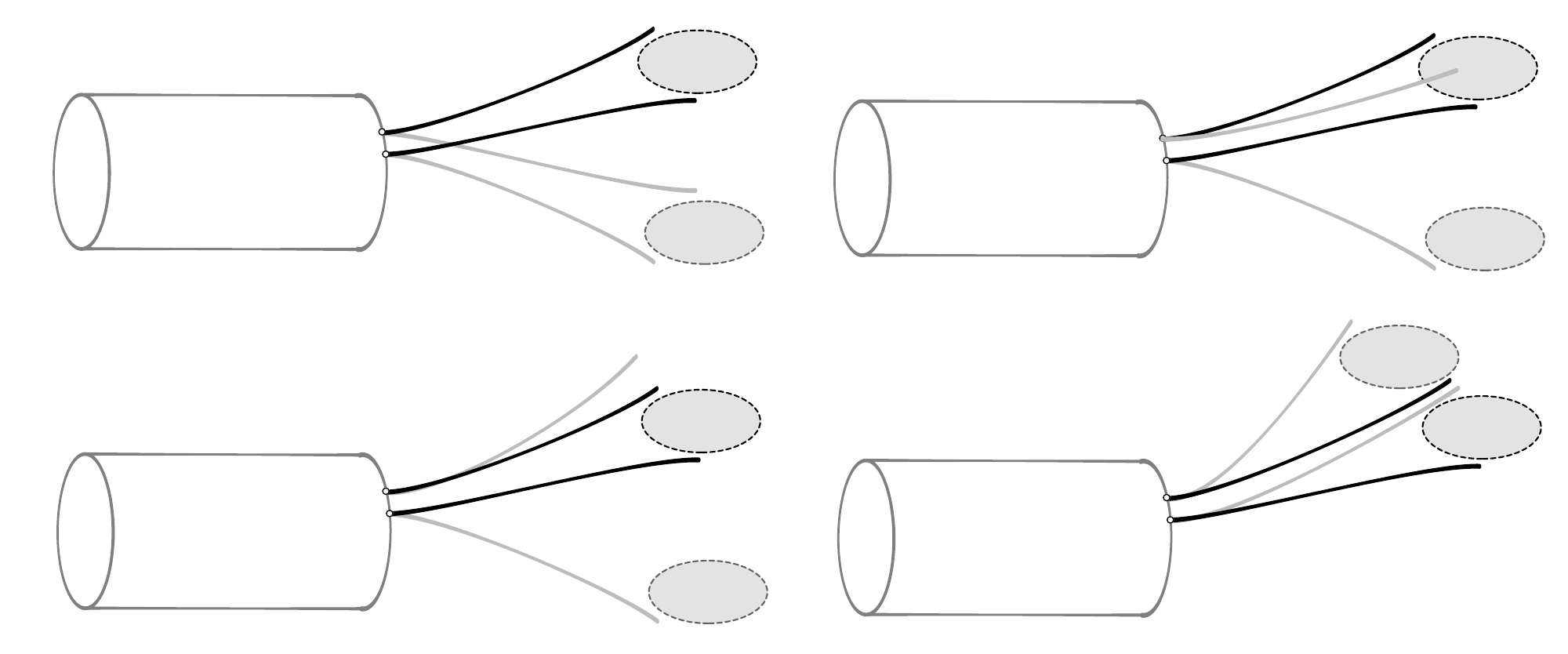}}}
\vspace{-24pt}
\end{center}
\caption{The first four cases}
\label{fig:cylindermultiple}
\end{figure}

Denote by $\alpha_1$ and $\alpha_2$ (resp. $\beta_1$ and $\beta_2$) the extension of $\alpha$ (resp. $\beta$) from $w_1$ and $w_2$ respectively. Without loss of generality, we may assume that $\alpha_1$ is the right most extension, otherwise we may exchange the role of $\alpha$ and $\beta$. Then, if we extend $a$ following $\alpha_1$, this extension, denoted by $a_-$, will intersect $\beta_1$. On the other hand, the arc $a$ already intersects the arc connecting to $w_1$, hence we have a bigon. Therefore, when we move the path $a_-$ to a minimal position with $\beta$, it should intersect $\eta_i^-$ between $w$ and $w_1$ (see Figure \ref{fig:cylindertwo}).

\begin{figure}[h]
\leavevmode \SetLabels
\endSetLabels
\begin{center}
\AffixLabels{\centerline{\includegraphics[width=7cm]{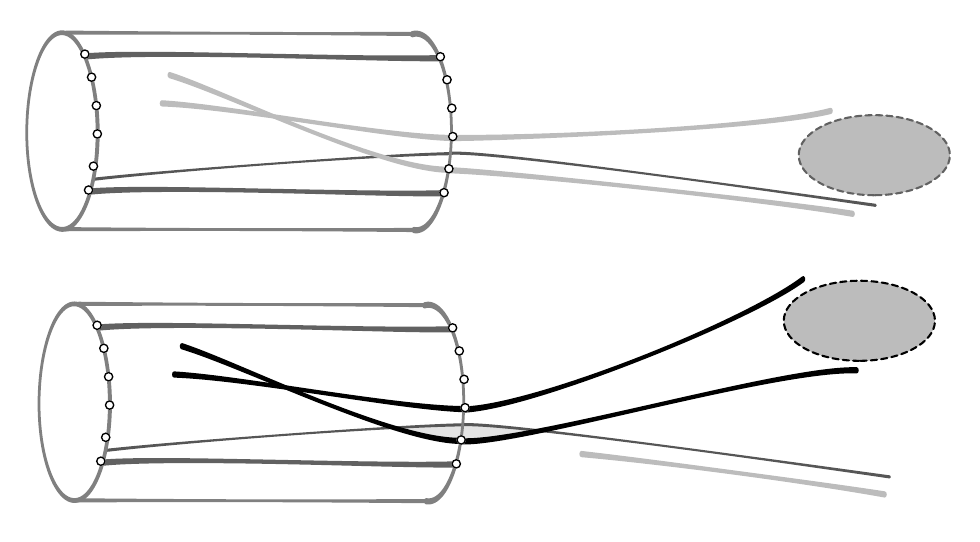}}}
\vspace{-24pt}
\end{center}
\caption{Intersecting between $w$ and $w_1$}
\label{fig:cylindertwo}
\end{figure}

Notice that the number of arcs in $\alpha\cap U_i$ intersecting $\Sigma^-$ is the same as that for $\beta$. We denote this number by $s$. Hence $a_-$ has $s+1$ intersections with $\alpha$ and has $s$ intersections with $\beta$ in $U_i$ (see Figure \ref{fig:cylinderintersections}).

\begin{figure}[h]
\leavevmode \SetLabels
\endSetLabels
\begin{center}
\AffixLabels{\centerline{\includegraphics[width=12cm]{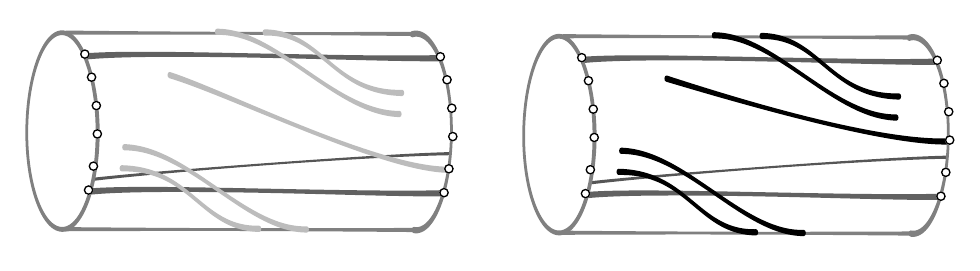}}}
\vspace{-24pt}
\end{center}
\caption{Comparing arcs of $\alpha$ and $\beta$}
\label{fig:cylinderintersections}
\end{figure}

Moreover, all of its intersections with $\alpha$ and those with $\beta$ are negative. Now, if we apply $D_{\eta_i}^{-1}$ on $a_{-}$, its intersection with $\alpha$ will increase by $n-{s+1}$ and its intersection with $\beta$ will increase by $n-s$. As such, we have
	\[
		i(a_-,\alpha\cap U_i)-i(a_-,\beta\cap U_i)\neq i(D_{\eta_i}^{-1}(a_-),\alpha\cap U_i)-i(D_{\eta_i}^{-1}(a_-),\beta\cap U_i),
	\]
where the homotopy used for counting intersection number is relative to the endpoint of the involved segments. Then, when we close up $a_-$ and $D_{\eta_i}^{-1}(a_-)$ with a same well-chosen segment, one of them will intersect $\alpha$ and $\beta$ differently. 

The last step is to close $a_-$ up to get a curve which does not backtrack along $a_-$. Notice that, on the side $\eta_i^-$, the path $a_-$ separates from $\beta$ earlier than $\alpha$. Hence we only need to consider $a_-$ going along $\alpha$ and how to connect it back. Similarly, on the side of $\eta_i^+$, we need the extension $a_+$ of $a$ to be on the left side of the extensions of both $\alpha$ and $\beta$ from $v_1$ before they separate. Notice that $\alpha$ and $\beta$ will separate eventually. When they separate, locally we can tell which one goes to the left and which one goes to the right. And we will extend $a$ following the one that goes left.

Without loss of generality, we assume that $a$ extends on both sides along $\alpha$. Notice that $a_+$ extends on the right of $\alpha$ when it leaves $\eta_i^+$ if we orient $\alpha$ with the direction pointing out from $U_i$. Consider the first time when $a_+$ intersects itself after separating from $\beta$. Then we have the following two cases, depending on whether it intersects itself from left to right or from right to left. From this point, we can extend $a_+$ to a path going back to $U_i$ on the side $\eta_i^+$.

\begin{figure}[h]
\leavevmode \SetLabels
\endSetLabels
\begin{center}
\AffixLabels{\centerline{\includegraphics[width=10cm]{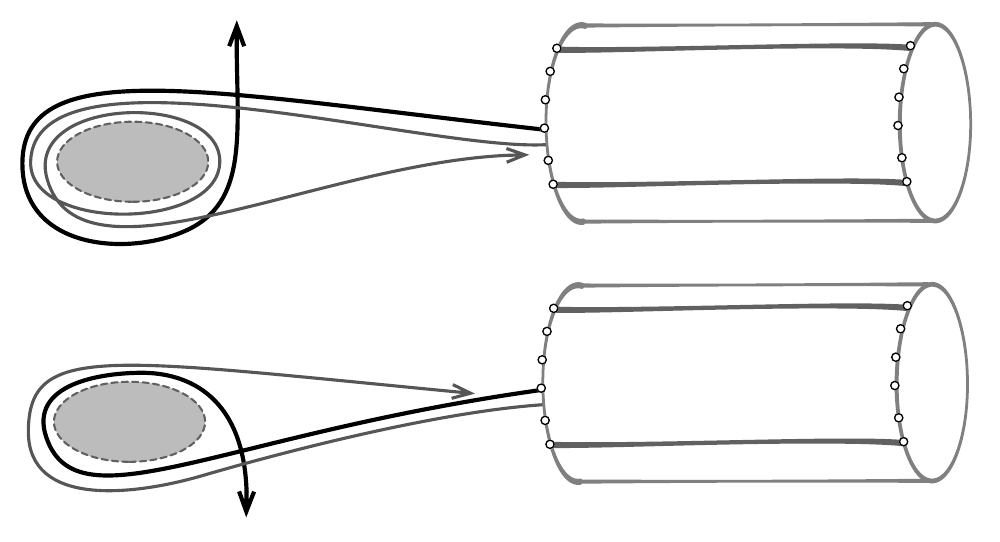}}}
\vspace{-24pt}
\end{center}
\caption{Extending the curve}
\label{fig:cylinderwind}
\end{figure}

When the $\alpha$ crosses itself from left to right, then $a_+$ follows $\alpha$ to this intersection point and then turns left to follow $\alpha$ until coming back to this intersection of $\alpha$ again, then turns right and follows itself back to $\eta^+_i$. When $\alpha$ crosses itself from right to left, then $a_+$ follows $\alpha$ to this intersection point, then turns left and goes back to $\eta_i^+$ following itself (see Figure \ref{fig:cylinderwind}).

We do the same thing on the $a_-$ side, and extend $a_-$ to a path going back to $U_i$ from the $\eta_i^-$ side. Then, connecting the two ends, we will get the curve $\gamma$. We can use the same extension for the arc $D^{-1}_{\eta_i}(a)$ and get a curve $\gamma'$. Notice that, by the construction, the cylinder $U_i$ is also $\{\gamma,\gamma'\alpha,\beta\}$-cylindrical. By the above discussion on the intersections, we have 
\begin{equation}\label{neq}
	i(\gamma,\alpha\cap U_i)-i(\gamma,\beta\cap U_i)\neq i(\gamma',\alpha\cap U_i)-i(\gamma',\beta\cap U_i).
\end{equation}

Notice that in the construction of the extension $a_+$ and $a_-$, we can add any homotopically non-trivial loop to them once they separate from $\alpha$ and $\beta$. In particular, we can construct a sequence of pairs of curves $(\gamma_n,\gamma_n')$ whose self-intersections go to infinite, such that $U_i$ is $\{\gamma_n,\gamma_n',\alpha,\beta\}$-cylindrical. Since, for each couple, inequality (\ref{neq}) holds, we may find infinitely many $k$ such that $\alpha$ and $\beta$ are not $k$-equivalent.

This concludes the last of the subcases of Case 2, and, hence, the proof of the theorem.
\end{proof}

\section{Being $k$ but not $k'$ equivalent}\label{sec:tobeornottobe}
In this section, we will construct explicit examples to show that there is no implication between $k$-equivalence and $k'$-equivalence for any pair of distinct positive integers $k$ and $k'$. 

\subsection{Constructing an example for any distinct pair $k$ and $k'$}
The rough idea is to modify a chosen curve $\gamma$ to get a pair of curves with desired property. We will first discuss the construction for the case when the starting curve $\gamma$ is simple, and then generalize the construction to the case where $\gamma$ has self-intersection. We consider $\gamma$ with a fixed orientation, and denote by $U$ one of its cylinder neighborhood. Let $U^+$ and $U^-$ be the connected components of $U\setminus\gamma$ on the left and right of $\gamma$ respectively. Let $\eta$ be a (not necessarily simple) curve intersecting $\gamma$ non-trivially. Let $x$ be one of the intersection point in $\gamma\cap\eta$. We consider the orientation on $\eta$ such that it passes $x$ from $U^+$ to $U^-$.

Now we will connect $\eta$ with $\gamma^n$ for some $n\in\mathbb{N}^\ast$ at $x$ in two different ways to get two different curves $\alpha$ and $\beta$. The precise instructions to construct them are as follows. We will denote by $y$ a point on $\eta$ different from $x$ which we will use as a base point and by $m$ some positive natural number.

\paragraph{Constructing the curve $\alpha$.} We start from $y$ and go along $\eta$ following its orientation. When we reach $x$ from the side of $U^+$ for the first time, we turn to $\gamma$ and go along it following its orientation. When we meet $x$ for the $m$-th time, we turn to $\eta$ and go along it \textit{against} its orientation until we meet $x$ from the side of $U^-$. Then we go along $\gamma$ against its orientation. When meet $x$ the $m$-th time, we turn and go along $\eta$ \textit{following} its orientation. We stop at $y$ when we meet it for the first time, and denote the resulting curve by $\alpha$.

\paragraph{Constructing the curve $\beta$.} The construction of $\beta$ is similar to that of $\alpha$. The difference is the direction to which we turn at each time when we switch between $\eta$ and $\gamma$. 

More precisely, we start from the point $y$ and go along $\eta$ following its orientation until we reach $x$ from the side of $U^+$. Then we go along $\gamma$ following its orientation. When we meet $x$ for the $m$-th time, we turn to $\eta$ and go along it \textit{following} its orientation (this is different from $\alpha$). Until we meet $x$ from the side of $U^+$, we go along $\gamma$ against its orientation. When we meet $x$ for the $m$-th time, we turn to $\eta$ and go along it following its orientation. We stop at $y$ when we meet it for the first time, and the resulting curve is denoted by $\beta$. Figure \ref{fig:twistingbig} is an illustration of the construction of $\beta$.

Figure \ref{fig:combinatorialpicture} is a more combinatorial illustration of the differences between $\alpha$ and $\beta$ in $U$. The left two figures are for $\alpha$ in $U^+$ and $U^-$ respectively, while the last one is for $\beta$ in $U$.\\

\begin{figure}[h]
\leavevmode \SetLabels
\endSetLabels
\begin{center}
\AffixLabels{\centerline{\includegraphics[width=10cm]{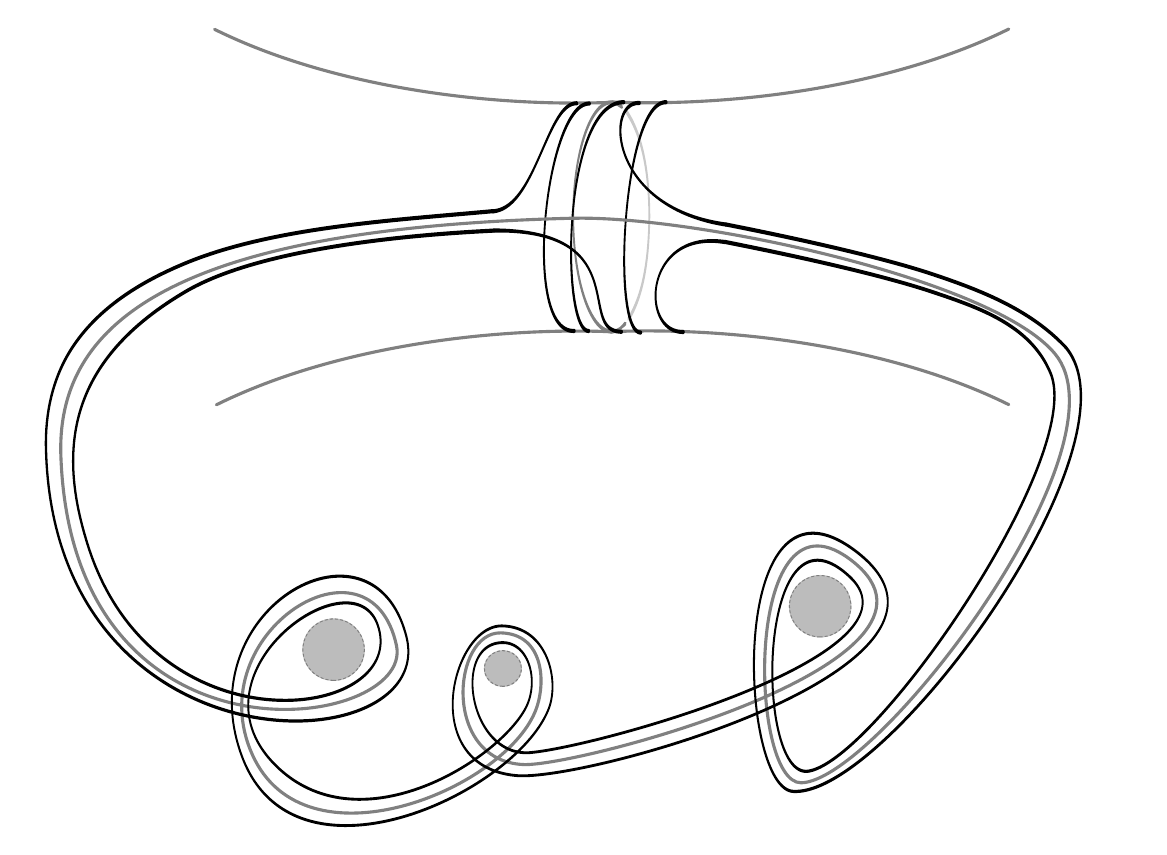}}}
\vspace{-24pt}
\end{center}
\caption{Extending the curve}
\label{fig:twistingbig}
\end{figure}

\begin{figure}[h]
\leavevmode \SetLabels
\endSetLabels
\begin{center}
\AffixLabels{\centerline{\includegraphics[width=14cm]{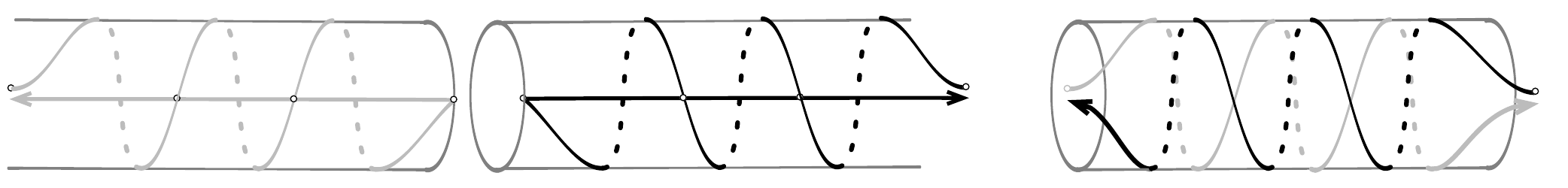}}}
\vspace{-24pt}
\end{center}
\caption{A more combinatorial picture of what is happening}
\label{fig:combinatorialpicture}
\end{figure}

Notice that in this construction, we do not use the fact that $\gamma$ is simple. In fact, the only thing that we use is that the surface is oriented and, once we give a curve an orientation, locally we can talk about left and right. Hence the construction works for arbitrary $\gamma$. The only thing which we have to be careful about is that the curve $\eta$ should not have segments going along $\gamma$ already (in other words, we assume that both $\alpha$ and $\beta$ are taut).

Now we need to verify that the curves $\alpha$ and $\beta$, constructed from well-chosen $\gamma$ and $\eta$, satisfy the desired property:
\begin{lemma}\label{lem:techlem}
	Let $\gamma$ be a $k$-curve with $k>0$. Let $m>0$ be an integer. The curves $\alpha$ and $\beta$ constructed above
	\begin{enumerate}
		\item intersect $\gamma$ a different number of times and hence are not $k$-equivalent;
		\item are $k'$-equivalent for any $k'<m^2k$ different from $k$.
	\end{enumerate}
\end{lemma}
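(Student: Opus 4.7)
The plan is to prove the two assertions separately, exploiting the fact that $\alpha$ and $\beta$ are constructed from the same pieces --- two arcs of $\eta$, one copy of $\gamma$ traversed $m$ times forward, and one copy traversed $m$ times backward --- but glued in two distinct ways at the four branches meeting at $x$.

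For assertion (1), I would compute $i(\alpha,\gamma)$ and $i(\beta,\gamma)$ directly by decomposing each curve at $x$. Both $\gamma^{\pm m}$ pieces contribute equally to the two intersection counts: each self-intersection point of $\gamma$ is crossed $2m$ times by a single $m$-fold traversal, giving $2mk$ per direction. The $\eta$-segments also contribute equal totals to both sides. The discrepancy comes from the re-gluing at $x$: the two local configurations differ in the number of small bigons that must be resolved to obtain a taut representative, and since $\eta$ crosses $\gamma$ transversally at $x$, this difference is non-zero. Hence $i(\alpha,\gamma) \ne i(\beta,\gamma)$, and $\alpha$ is not $k$-equivalent to $\beta$.

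For assertion (2), let $\delta \in \C_{k'}$ with $k' < m^2 k$ and $k' \ne k$, and place $\{\alpha,\beta,\delta\}$ in taut position. Since $\alpha$ and $\beta$ coincide outside a small disk neighborhood $D$ of $x$, the difference $i(\alpha,\delta) - i(\beta,\delta)$ is entirely determined by a local count of how the strands of $\delta$ pass through $D$. I would then argue, using the cylinder lemma around a simple curve bounding a regular neighborhood of $\gamma$, that this local count vanishes unless $\delta$ contains a sub-arc homotopic to the power $\gamma^m$. Since $i(\gamma^m,\gamma^m) = 2m^2 k$, any such sub-arc would force the self-intersection of $\delta$ to be at least $m^2 k$, contradicting the bound $k' < m^2 k$. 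The case $k' = k$ must be excluded because $\gamma \in \C_k$ is precisely the distinguishing curve exhibited in part (1).

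The main obstacle will be to make the local strand-counting in assertion (2) fully rigorous --- specifically, to prove that the only way a test curve $\delta$ can detect the re-gluing at $x$ is by carrying the same $m$-fold winding pattern around $\gamma$ that distinguishes $\alpha$ from $\beta$. This should follow from a signed-intersection argument in the spirit of Lemma \ref{lem:arcs}, applied strand by strand inside $D$, combined with the pants lemma organized around a simple curve close to $\gamma$.
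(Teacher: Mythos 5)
Your treatment of assertion~(1) is fine in spirit (the paper computes $i(\alpha,\gamma)+2=i(\beta,\gamma)$ by the bigon criterion, which is exactly the ``small bigons resolved at $x$'' idea), but assertion~(2) has a genuine gap at the very first step. You claim that because $\alpha$ and $\beta$ coincide as sets outside a small disk $D$ around $x$, the difference $i(\alpha,\delta)-i(\beta,\delta)$ is ``entirely determined by a local count of how the strands of $\delta$ pass through $D$.'' This is false: the underlying point sets agree away from $x$, but $\alpha$ and $\beta$ are not taut in that position, and to compute geometric intersection numbers one must pass to minimal-position representatives, which differ from each other throughout an entire cylinder neighborhood $U$ of $\gamma$, not just near $x$. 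Concretely (see Figure~\ref{fig:combinatorialpicture}) the taut model of $\alpha$ sits as two families of strands, one entirely in $U^+$ and one entirely in $U^-$, never crossing the core $\gamma$, whereas the taut model of $\beta$ consists of arcs running across $\gamma$. A test curve $\delta$ can therefore detect the difference anywhere in $U$, and the correct quantity to track is not a local count at $x$ but the winding data of $\delta$'s strands in $U$ (the paper's $W$-value) and the twist $s$ of its crossing arcs. The signed-arc formula of Lemma~\ref{lem:arcs} is used at the level of the whole cylinder, not in a disk around $x$.

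The second part of your plan, that a distinguishing $\delta$ must ``contain a sub-arc homotopic to the power $\gamma^m$,'' is the right intuition but it does not drop out of a disk-local analysis; it comes from the cylinder computation: a strand $c$ with $W(c)<m+1$ satisfies $i(c,a^{+})=i(c,b^{+})$, and a crossing arc obtained from the base arc by $s$ Dehn twists along $\gamma$ has intersection $|s+m|+|s-m|$ with $b^{+}\cup b^{-}$, which equals the $\alpha$-count $2m$ exactly when $|s|\le m$. Only then does one conclude that a distinguishing curve (other than $\gamma$ itself) must traverse $\gamma$ at least $m+1$ times, which via the bilinearity estimate $i(m\gamma,m\gamma)=2m^{2}k$ forces its self-intersection above $m^{2}k$. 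Also note that $\gamma$ is non-simple, so your proposed ``simple curve bounding a regular neighborhood of $\gamma$'' is not a single curve and the cylinder lemma does not apply in the way you describe; the paper handles this by first reasoning heuristically as though $\gamma$ were simple and then transporting the count to the genuine $k$-curve case. To repair your argument, replace the disk $D$ by the cylinder $U$ of $\gamma$, introduce the winding invariant $W$, and run the Lemma~\ref{lem:arcs} computation strand-by-strand in $U$.
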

\begin{proof}
	To prove the first point, let us first consider the case when $\gamma$ is simple. Using the bigon criterion, from the construction of $\alpha$ and $\beta$, we have the identity
		\[
			i(\alpha,\gamma)+2=i(\beta,\gamma).
		\]
	In particular, they are not $0$-equivalent. Now we observe that the above equality also holds when $\gamma$ is not simple. This follows from the fact that the local pictures are the same, and the curves are in minimal position again by the bigon criterion. This proves the first statement.

	Now we pass to the second. To visualize the construction, we consider the case when $\gamma$ is simple and oriented and although the construction will not work in this case, it is helpful to understand what is happening.
	
	Let $U$, $U^+$ and $U^-$ be as in the construction of $\alpha$ and $\beta$. We denote the boundary components of $U^+$ and $U^-$ that are different from $\gamma$ by $\gamma^+$ and $\gamma^-$ respectively. We call a strand an arc entering and leaving $U$ from the same boundary component. Up to a homotopy relative to $\partial U$, such a strand is disjoint from $\gamma$ and is contained in either $U^+$ or $U^-$. Note that a strand is not necessarily simple. 
	
As before, we associate to $\gamma$ an orientation. Let $a$ be an oriented strand in $U^+$. By connecting the end points of $a$ with a segment of $\gamma^+$, we obtain a curve which is homotopic to $\gamma^l$ with $l\in\ZZ$. Notice that there are two ways to close up $a$ with segments on $\gamma^+$ and the resulting closed curves homotopic to $\gamma^{l_1}$ and $\gamma^{l_2}$ with $|l_1-l_2|=1$. We define
		\[
			\mathrm{W}(a)=\max\{|l_1|,|l_2|\}.
		\]
		
	Let $a$ and $b$ be two oriented strands contained both in $U^+$, and we are interested in the intersection number between $a$ and $b$ in $U^+$. Here the homotopy is considered relative to $\gamma^+$. By the bigon criteria, given any strand $a$, if $b$ and $b'$ are two strands with the same end points, such that up to homotopy $b$ is part of $b'$ as subset and
		\[
			W(a)\le W(b),
		\]
	then
		\[
			2(W(a)-1)\le i(a,b)=i(a,b')\le 2W(a).
		\]
	
	Let us use the same notation, and let $U$ be the cylinder neighborhood of $\gamma$ used in the construction of $\alpha$ and $\beta$. By construction, $\alpha$ is obtained from two copies of $\eta$ by connecting them by strands $a^+$ and $a^-$ at $x$ in $U$, while $\beta$ is obtained from two copies of $\eta$ by connecting them by arcs $b^+$ and $b^-$ at $x$ in $U$. By the choice made in the construction, up to homotopy, as subsets those strands in $\alpha$ can be viewed as part of the arcs in $\beta$. Therefore, for any strand $c$ in $U$ intersecting $a^+$ with $W(c)<W(a^+)=m+1$, we have
		\[
			i(c,a^+)=i(c,b^+).
		\]
	
For any arc in $U$, its intersections with $a^+$ and with $a^{-}$ are both $m$. On the other hand, its intersection with $b^+\cup b^-$ is greater or equal to $2m$. Notice that $b^+$ and $b^-$ can be viewed as applying a left Dehn twist $D_\gamma$ and right Dehn twist $D_\gamma$ on an arc in $\eta\cap U$, intersecting $\gamma$ at $x$, $m$ times respectively. If an arc $U$ is obtained by applying $D_\gamma$ $s$ times on the chosen arc in $\eta$, then, by Lemma \ref{lem:arcs}, its intersection number with $b^+\cup b^-$ is
		\[
			|s+m|+|s-m|.
		\]
Hence, when $|s|\le m$, this quantity is $2m$ and otherwise it will be greater than $2m$.
	
From the above discussion, we can see that, to have different intersections with $\alpha$ and $\beta$, the curve should be either $\gamma$, or have a strand in $U$ with $W$-value greater than $m$ or have an arc with $s$, as defined above, greater than $m$. In either of the last two cases, the curve has to go along $\gamma$ for $m'$ times with $m'>m$.
	
Now we come back to the case when $\gamma$ is a $k$-curve with $k>0$. Then following the above discussion, in order to have different intersections with $\alpha$ and $\beta$, the curve is either $\gamma$ or goes along $\gamma$ for at least $m+1$ times. By bilinearity of the intersection number $i$, we have
		\[
			i(m\gamma,m\gamma)=m^2i(\gamma,\gamma)=2m^2k.
		\]
which is a lower bound for the self-intersection of a curve different from $\gamma$ and intersecting $\alpha$ and $\beta$ differently. Hence $\alpha$ and $\beta$ cannot be distinguished by any $k'$-curve different from $\gamma$ with $k'<m^2k$. 
\end{proof}

\subsection{From integers to sets of integers}

Using Lemma \ref{lem:techlem}, we can prove the following more general result.
\begin{proposition}
Let $K$ and $K'$ be two disjoint finite collections of positive non-zero integers. There exists a pair of curves $\alpha$ and $\beta$ which are not $k$-equivalent for any $k\in K$ and $k'$-equivalent for any $k'\in K'$.
\end{proposition}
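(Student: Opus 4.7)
The plan is to apply Lemma \ref{lem:techlem} in parallel, once for each $k \in K$, in pairwise disjoint regions of $\Sigma$, and to combine the resulting local modifications into a single pair $(\alpha,\beta)$. Write $K=\{k_1,\dots,k_n\}$ and set $N=\max(K\cup K')$. Choose an integer $m$ with $m^2\min(K)>N$, so that the second conclusion of Lemma \ref{lem:techlem}, applied with parameter $m$ to any $k_i$-curve, will govern every $k'$-curve with $k'\in K'$.

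Since $g\geq 2$, one can find $n$ pairwise disjoint subsurfaces $Y_1,\dots,Y_n$ of $\Sigma$, each of positive genus, and place a $k_i$-curve $\gamma_i$ inside $Y_i$. Take a single primitive curve $\eta$ that visits every $Y_i$ and meets each $\gamma_i$ transversely, and fix an intersection point $x_i\in\eta\cap\gamma_i$. At each triple $(x_i,\gamma_i,\eta)$, run the local recipe preceding Lemma \ref{lem:techlem} with parameter $m$: this produces two local ``turnings'' inside a cylinder neighborhood $U_i$ of $\gamma_i$. Making the turning producing $\alpha$ at every site defines a single primitive curve which we still call $\alpha$; making the other turning at every site defines $\beta$. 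The $U_i$'s are pairwise disjoint by construction, and $\alpha$ and $\beta$ agree setwise outside $\bigcup_i U_i$.

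Because of this disjointness, intersection differences decompose site by site: for any test curve $\delta$ placed in simultaneous taut position with $\alpha\cup\beta\cup\gamma_1\cup\dots\cup\gamma_n$, one expects
\begin{equation*}
i(\delta,\alpha)-i(\delta,\beta)=\sum_{i=1}^{n}\bigl(i(\delta\cap U_i,\alpha\cap U_i)-i(\delta\cap U_i,\beta\cap U_i)\bigr),
\end{equation*}
each summand being exactly the local intersection difference analyzed in the proof of Lemma \ref{lem:techlem} with base curve $\gamma_i$. Apply this to $\delta=\gamma_j$: only site $j$ contributes (as $\gamma_j\subset Y_j$), and the first conclusion of Lemma \ref{lem:techlem} gives a non-zero value, so $\alpha\not\sim_{k_j}\beta$ for every $k_j\in K$. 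Apply it instead to a $k'$-curve $\delta$ with $k'\in K'$: at each site $i$ one has $k'\le N<m^2 k_i$, and $\delta\neq\gamma_i$ since $k'\neq k_i$; the second conclusion of Lemma \ref{lem:techlem} then forces the $i$-th summand to vanish, giving $i(\delta,\alpha)=i(\delta,\beta)$, i.e.\ $\alpha\sim_{k'}\beta$.

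The main obstacle is justifying the site-wise decomposition displayed above. Concretely, one must verify that the cylinders $U_i$ produced by the construction can be chosen pairwise disjoint and that each behaves as an $\{\alpha,\beta,\delta,\gamma_i\}$-cylindrical neighborhood in the sense of Lemma \ref{cylinderlemma}; equivalently, that the local analysis used in the proof of Lemma \ref{lem:techlem} for a single site transfers verbatim to each site in the multi-site configuration, with the contributions outside $\bigcup_i U_i$ being identical for $\alpha$ and $\beta$. Once this locality is secured, the two conclusions of Lemma \ref{lem:techlem} apply independently at each site and the proposition follows.
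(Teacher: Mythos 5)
Your plan differs from the paper's: you want to distribute one $\gamma_i$ in each of $|K|$ pairwise disjoint subsurfaces and do all the turnings in parallel, then localize the intersection count site by site; the paper instead runs the construction of Lemma~\ref{lem:techlem} \emph{iteratively}, turning $\eta$ into $\alpha_1$ at $\gamma_1$, then $\alpha_1$ into $\alpha_2$ at $\gamma_2$, and so on, with the same choice of turning at each step yielding $\alpha_s$ and the opposite choice yielding $\beta_s$. The iterative route never asks for disjoint supporting subsurfaces.

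There is a concrete gap in your version. You assert that since $g\geq 2$ one can find $n=|K|$ pairwise disjoint positive-genus subsurfaces $Y_1,\dots,Y_n$ in $\Sigma$. This fails whenever $n>g$: disjoint positive-genus subsurfaces contribute additively to the genus of $\Sigma$, so a closed surface of genus $g$ carries at most $g$ of them. Even relaxing from positive genus to pairs of pants (which can host $k$-curves for $k\geq 1$), the count is bounded by $2g-2$, still finite, whereas the proposition is meant for arbitrary finite $K$ on a fixed $\Sigma$ of genus $g\geq 2$ (the paper explicitly notes no genus restriction beyond $g\geq 2$). So your construction does not even get off the ground for, say, $g=2$ and $|K|\geq 3$. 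Beyond this, the site-wise decomposition formula is the crux of the whole argument and you leave it unverified; it is plausible but requires showing that the multi-site modification still makes each $U_i$ an $\{\alpha,\beta,\delta,\gamma_i\}$-cylindrical neighborhood simultaneously. The paper's iterative construction has the twin virtues that the $\gamma_i$ need not have disjoint supports (so there is no genus constraint), and that after each step one only needs to re-run the single-site analysis of Lemma~\ref{lem:techlem} with the curve built so far playing the role of $\eta$, rather than prove a simultaneous multi-site locality statement from scratch.
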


\begin{proof}
	Let $K=\{k_1,\dots,k_s\}$ and $K'=\{k_1',\dots,k_t'\}$. Let $k_{\textrm{max}}'=\max K'$. Let $m$ be a positive integer such that $m^2k_i>k_{\textrm{max}}'$ for all $i\in\{1,...,s\}$. For each $i\in\{1,...,s\}$, let $\gamma_i$ be a $k_i$-curve. We choose $\eta$ to be a curve which intersects all the curves $\gamma_i$ non trivially and has no segments going along any of $\gamma_i$s. We apply the construction in the first part of this section to get a pair of desired curves, by doing it several times. 
	
	For our convenience, we call the construction of $\alpha$ the construction I, and that of $\beta$ the construction II. We construct $\alpha_1$ from adjoining $\eta$ with $2m$ copies of $\gamma_1$ by the construction I. If we have constructed $\alpha_i$, then we construct $\alpha_{i+1}$ from adjoining $\alpha_i$ with $2m$ copies of $\gamma_{i+1}$ by the construction I. In this way, we get a curve $\alpha_s$. Similarly, if in each step we use construction II, then we get another sequence of curves and denote the last one by $\beta_s$.
	
	A similar discussion as in the proof of Lemma \ref{lem:techlem} will show that the two curves $\alpha_s$ and $\beta_s$ are not $k$-equivalent for any $k\in K$, but $k'$-equivalent for any $k'\in K'$.

\end{proof}
\begin{remark}
Observe that, so far, we do not require any restrictions on the genus of $\Sigma$ and so the results all hold for any surface of genus $g\ge2$.
\end{remark}

\section{How many curves do we need to know if two curves are the same?}\label{sec:infinite}

For two distinct curves $\alpha$ and $\beta$, the previous discussion shows that they can be $k$-equivalent for any finite number of integers $k$. Thus in order to be able to distinguish between two arbitrary curves using their intersections with a general reference system, the reference system must have $k$-curves for all $k$. If however we know self-intersection numbers of $\alpha$ and $\beta$, we will show that a reference system can be chosen which only consists only curves with an explicit upper bound on their self-intersection. 

We will consider two cases: when $\alpha$ and $\beta$ are simple and when $\alpha$ and $\beta$ have self-intersections smaller than a constant $L>0$. 

We begin with the case where the curves are simple in order to illustrate the approach.

\subsection{Finite simple length rigidity}
The following is well-known (see for instance \cite{Hamenstadt}). 
\begin{proposition}
There exists a finite collection $\C$ of simple curves such that any pair of simple curves $\alpha$ and $\beta$ are homotopic to each other if and only if they have the same number of intersection with every curve in $\C$.
\end{proposition}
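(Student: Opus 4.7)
The plan is to invoke Dehn--Thurston coordinates. Fix a pants decomposition $P=\{\eta_1,\dots,\eta_{3g-3}\}$ of $\Sigma$. The classical theorem of Dehn (revisited by Thurston) parametrises isotopy classes of simple multicurves by the pair of tuples $(m_i,t_i)_{i=1}^{3g-3}$, where $m_i:=i(\alpha,\eta_i)$ is the crossing parameter and $t_i\in\Z$ is a twisting parameter recording how $\alpha$ wraps around $\eta_i$ in a cylindric neighborhood. The $m_i$'s are already intersection numbers with the members of $P$, so the whole task reduces to detecting the $t_i$'s from finitely many additional intersection numbers.

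For each $i$, choose a simple dual curve $\mu_i$ meeting $\eta_i$ transversally (once or twice, depending on whether the two pants adjacent to $\eta_i$ coincide) and meeting the other pants curves as few times as possible. Using Lemma \ref{lem:arcs} applied inside a cylindric neighborhood of $\eta_i$, the number $i(\alpha,\mu_i)$ is a piecewise-linear function of the Dehn--Thurston coordinates which, once the $m_j$'s are fixed, depends affinely on $|t_i|$ for $|t_i|$ large. This recovers $|t_i|$. To recover its sign, I would also include $D_{\eta_i}(\mu_i)$ in the collection: since a positive Dehn twist along $\eta_i$ shifts $t_i$ by one, the difference $i(\alpha,\mu_i)-i(\alpha,D_{\eta_i}(\mu_i))$ has a sign that depends monotonically on the sign of $t_i$.

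Setting
$$
\C \;=\; P \;\cup\; \bigcup_{i=1}^{3g-3}\bigl\{\mu_i,\; D_{\eta_i}(\mu_i)\bigr\},
$$
gives a finite family of simple curves. If $\alpha,\beta$ are simple with $i(\alpha,\gamma)=i(\beta,\gamma)$ for every $\gamma\in\C$, the preceding discussion shows $(m_i(\alpha),t_i(\alpha))=(m_i(\beta),t_i(\beta))$ for all $i$, so $\alpha$ and $\beta$ are homotopic by the Dehn--Thurston theorem.

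The main point requiring care is the degenerate situation where $m_i=0$ for some $i$: in that case $\alpha$ is disjoint from $\eta_i$, the convention fixes the sign of $t_i$, and $t_i$ simply counts components of $\alpha$ isotopic to $\eta_i$, a quantity readily read off from $i(\alpha,\mu_i)$. Verifying that the piecewise-linear formulas behave as claimed across the locus where some $m_i$ vanishes is the only genuine technicality; everything else is a direct application of the Dehn--Thurston parametrisation.
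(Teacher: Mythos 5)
Your overall strategy---Dehn--Thurston coordinates, pants curves for the crossing parameters $m_i$, dual curves and their Dehn twist images for the twist parameters $t_i$---matches the paper's. However, there is a gap in the step where you claim to recover $|t_i|$ from $i(\alpha,\mu_i)$. On a closed surface it is generally impossible to choose a dual curve $\mu_i$ that meets $\eta_i$ but is disjoint from every other pants curve: a dual curve must cross the adjacent pairs of pants, and in doing so it typically crosses several $\eta_j$ with $j\neq i$. When that happens, $i(\alpha,\mu_i)$ is a piecewise-linear function of \emph{all} the twist parameters $t_j$ for which $i(\mu_i,\eta_j)>0$, not of $t_i$ alone. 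Fixing the $m_j$'s therefore does \emph{not} make $i(\alpha,\mu_i)$ a function of $t_i$ only, and the sentence ``This recovers $|t_i|$'' does not follow: you know the growth rate in $t_i$ but not the additive contribution coming from the other twist parameters.

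This is exactly the ``third difficulty'' the paper flags: intersection with a dual curve returns a coupled sum of twist coordinates rather than each one separately. The paper sidesteps it by enlarging the reference system to include \emph{all} simple curves almost embedded in a dual graph of $P$, together with their Dehn twist images along pants curves; this richer family provides enough independent piecewise-linear constraints to invert the coupling. To repair your argument you would either need to adopt a similarly enlarged collection, or supply a linear-algebra argument showing that the system of intersection equations $\bigl(i(\alpha,\mu_i), i(\alpha,D_{\eta_i}(\mu_i))\bigr)_i$ determines $(t_i)_i$ once the $m_i$ are known---i.e., that the relevant matrix of coefficients $i(\mu_i,\eta_j)m_j$ is invertible across all the relevant sign regimes. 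As written, the proof asserts more than the chosen collection $\C$ delivers.
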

\begin{proof}
We sketch a quick proof to highlight the difference between simple and non-simple curves. 

The statement essentially boils down to using Dehn-Thurston coordinates. In order to define Dehn-Thurston coordinates, we need a marked pants-cylinder decomposition of the surface. 
		
We say that a curve is \textit{almost embedded} in a graph if the curve cover all edges of the graph at most twice and if there is an edge of the graph which is covered only once. The curves on the boundary of a contractible neighborhood of the dual graph are called the dual curves to the pants decomposition.
	
Roughly speaking, the intersection coordinates can be read by considering the intersection number with the pants curves. We would like to recover the twist coordinates by checking the intersection number with the dual curves. However, only taking pants curves and their dual curves are not enough, and there are several difficulties to overcome.

The first one is that the intersection may not be essential: when we try to push curves into the neighborhood of the dual graph as much as possible, the curve that we consider and the dual curves may not be in minimal position. The second one is that even if we assume that these intersections are essential, we have to find a way to read the sign of the twist coordinates by looking at the intersection with the dual curves. The third one is that intersection with each dual curve includes all twist coordinates related to this curve. We do not get each twist coordinates separately, but rather a sum.
	
The solution is to add the image of dual curves under Dehn twists along pants curves into the reference system. The collection that we will take consists of curves in a pants decomposition, the collection of simple curves which are almost embedded in a dual graph of a pants decomposition, and their images under the left Dehn twist along the pants curves.
\end{proof}
\begin{remark}
One way to view this result is a version of finite length rigidity for simple curves (and more generally for measured laminations). The existence of $0$-equivalent curves suggests that this is no longer true for curves with self-intersections as we shall see in the sequel.
\end{remark}

\subsection{Bounded intersections}
Assume now that $\alpha$ and $\beta$ have at most $L$ self-intersection points for some $L>0$. What follows is similar to our previous proofs. 

\begin{proposition}
There exists a universal constant $C>0$ such that intersection with $k$-curves for $k\leq CL$ distinguishes between $\alpha$ and $\beta$.
\end{proposition}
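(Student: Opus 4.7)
The plan is to refine the three-case argument in the proof of Theorem \ref{thm:otal} into a quantitative statement by tracking the self-intersection of the distinguishing curve constructed in each case. I would begin by applying the pants lemma (Lemma \ref{lem:pants}) to the multicurve $M = \{\alpha, \beta\}$, obtaining an $M$-cylindrical pants decomposition $P = \{\eta_1, \dots, \eta_{3g-3}\}$ with cylindric neighborhoods $U_i$ and the associated splitting $\Sigma = \Sigma^+ \cup \Sigma^-$. Since $\alpha$ and $\beta$ each have at most $L$ self-intersections, the pants lemma guarantees that their arcs are simple and pairwise disjoint in $\Sigma^-$ while any crossings live in the $U_i$; this gives uniform $O(L)$ control on the intersection numbers $i(\alpha, \eta_i)$, $i(\alpha, \zeta_j)$ and on the number of arcs in $(\alpha \cup \beta) \cap U_i$.

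In Case 1, where $\alpha \cap \Sigma^-$ and $\beta \cap \Sigma^-$ differ as multiarcs, I would use the simple curve $\gamma \subset \Sigma^-$ produced in Appendix \ref{app:hexagon} together with the family $\gamma^k \zeta_i$, which is a $k$-curve when $\gamma \zeta_i$ is a $1$-curve. The estimate $|i(\alpha, \gamma^k \zeta_i) - k \cdot i(\alpha, \gamma)| \leq m_0$ from the proof of Theorem \ref{thm:otal} has $m_0$ bounded by $i(\alpha, \zeta_i) + i(\alpha, \gamma^{-1} \zeta_i) = O(L)$, and combined with $|i(\alpha, \gamma) - i(\beta, \gamma)| \geq 1$ this yields a distinguishing $k$-curve for some $k = O(L)$. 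Case 2.1, where some $U_i$ contains different arc-counts for $\alpha$ and $\beta$, is essentially identical: the curve $\gamma_k$ obtained by twisting $D_{\eta_i}^k(a)$ and closing up has self-intersection $k$, and the error $|i(\alpha, \gamma_k) - i(\alpha, D_{\eta_i}^k(a))|$ is again at most a multiple of $L$.

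Case 2.2 is the main obstacle and requires the most care. Here $\alpha \cap U_i$ and $\beta \cap U_i$ have the same number of arcs but different combinatorics. The construction from the proof of Theorem \ref{thm:otal} extends an arc $a$ in $U_i$ by following $\alpha$ (and $\beta$) until the two curves are topologically separated, then uses the first self-intersection of the extension to close up into a distinguishing pair $(\gamma, \gamma')$. The key quantitative claim to verify is that the extension needs only follow $\alpha$ for $O(L)$ combinatorial steps in the pants-cylinder decomposition before the required separation and self-intersection appear, and that the subsequent closing-up contributes only $O(L)$ extra self-intersections. This should follow from the fact that the total combinatorial complexity of $\alpha \cup \beta$ inside $\Sigma^+$ is $O(L)$, so any ``useful'' topological event (either the separation of $\alpha$ from $\beta$ or the appearance of a self-intersection of the right orientation) must occur within bounded combinatorial distance of $U_i$.

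Collecting the bounds across all cases and setting $C$ to be the maximum of the constants obtained yields a universal $C$ as in the statement. Tight optimization of the constants in each case, together with careful accounting of the pants-cylinder topology, would then recover the explicit bound $k' \le 17 k$ announced in the introduction. The hardest step is the bookkeeping in Case 2.2, where one must ensure the extension producing $\gamma$ and $\gamma'$ both stabilizes within $O(L)$ steps and that the closing-up can be performed without introducing uncontrolled extra self-intersections.
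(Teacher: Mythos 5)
Your proposal has a genuine gap at the step where you claim the pants lemma gives ``uniform $O(L)$ control on the intersection numbers $i(\alpha,\eta_i)$, $i(\alpha,\zeta_j)$.'' This is false: the pants lemma arranges that the self-intersections of $\alpha$ and $\beta$ live in the cylinders $U_i$ and that the arcs in the pants are simple and disjoint, but it places no bound on how many arcs cross each pants curve. A curve with at most $L$ self-intersections (even a simple one) can intersect a pants curve arbitrarily many times. The paper flags this explicitly (``Notice that this number can be arbitrarily large'') and resolves it with an idea your proposal does not contain: using Dylan Thurston's resolution formula $i(\gamma,\eta)=\max\{i(\gamma_1,\eta),i(\gamma_2,\eta)\}$, one resolves $\alpha$ and $\beta$ to simple multicurves, applies a mapping class group element to put the resulting multicurve in minimal position with respect to a new pants decomposition, and glues back. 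Since the resolved multicurve has at most $i(M,M)\le 6L$ components, the new pants curves $\eta_i'$ satisfy $i(\alpha,\eta_i'),i(\beta,\eta_i')\le 6L$. Without this step you have no control on the self-intersection of $D_{\eta_i}(\gamma)$.

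Two further discrepancies worth noting. First, the paper does not run all three cases quantitatively: if $\alpha,\beta$ are not $0$-equivalent, a simple curve already distinguishes them, and Cases 1 and 2.1 both produce a simple distinguishing curve, so only Case 2.2 actually needs the quantitative argument. Second, even in Case 2.2 the paper \emph{changes} the construction: instead of extending the arc $a$ (which stayed in $\Sigma^+$) along $\alpha$ and $\beta$ until they topologically separate, the paper uses an arc $b$ that travels through $\Sigma^-$ and shows the two extensions $q^+,q^-$ consist of at most two copies of $\alpha$ or $\beta$ each, yielding $i(\gamma,\gamma)<28L$ directly. Your assertion that the extension ``stabilizes within $O(L)$ combinatorial steps'' is the crux of your argument, and you correctly identify it as the hardest step, but you give no reason why it should hold — and indeed the separation of $\alpha$ from $\beta$ can happen at a combinatorial distance not a priori controlled by $L$; this is precisely why the paper swaps in a different arc rather than trying to bound the length of the chase.
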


\begin{remark}
We have no idea what the optimal constant $C$ is, but we do know it can be taken to be $17$. 
\end{remark}
\begin{proof}
If $\alpha$ and $\beta$ are not $0$-equivalent, then by definition $\alpha$ and $\beta$ can be distinguished by their intersections with $0$-curves.	

Now we assume that $\alpha$ and $\beta$ are $0$-equivalent. We improve the construction of the curve $\gamma$ constructed in the case 2.2. in the proof of Theorem \ref{thm:otal} as following: instead of choosing the arc $a$ by connecting $(v,v_1)$ and $(w_1,w_2)$ which is contained in $\Sigma^+$, we consider the arc $b$ which connects $(v,v_1)$ and $(w_{n-1},w_n)$ from going through $\Sigma^-$. The rest of the construction would be similar. We denote by $q^+$ (resp. $q^-$) the extension of $b$ on the side $\partial_i^+$ (resp. $\partial_i^-$).
	
We denote by $\gamma$ the curve that we constructed in this way. Then its self-intersection $i(\gamma,\gamma)$ can be written as a sum $i(q^+,q^+)+i(q^+,q^-)+i(q^-,q^-)+2i(q^+,b)+2i(q^-,b)$. By their definition, the segments $q^+$ and $q^-$ are at most two copies of $\alpha$ or $\beta$. We first consider the following lemma about $i(\alpha,\beta)$.

If $i(\alpha,\beta)>2L$, then since we have
	\[
		i(\alpha,\alpha)\le 2L\textrm{ and }i(\beta,\beta)\le 2L,
	\]
the curve $\alpha$ and $\beta$ are not $k$-equivalent where $i(\alpha,\alpha)=2k$ or $i(\beta,\beta)=2k$. Then let $C=1$ and we obtained the proposition.

By the lemma above, we only have to consider the case where $i(\alpha,\beta)\le 2L$. Since, by our construction, $q^+$ and $q^-$ follow either part of $\alpha$ or part of $\beta$ at most twice, we have
	\[
		i(q^+,q^+)+i(q^+,q^-)+i(q^-,q^-)\le3\max\{4i(\alpha,\alpha),4i(\beta,\beta),4i(\alpha,\beta)\}\le24L.
	\]
On the other hand, by construction, both of them intersect $b$ at most once. Hence $i(\gamma,\gamma)<28L$.

Since we only know that one of the curves $\gamma$ and $D_{\eta_i}(\gamma)$ can distinguish $\alpha$ and $\beta$ from each other, we still have to bound the self-intersection number of $D_{\eta_i}(\gamma)$. To do this, we have to estimate $i(\eta_i,\alpha)$ and $i(\eta_i,\beta)$. Notice that this number can be arbitrarily large and so we will need to modify the pants decomposition that we first considered, and find a new pants decomposition such that we can bound the intersection of $\alpha$ and $\beta$ with the new pants curves.
	
The strategy that we would like to apply is to use a formula due to Dylan Thurston \cite{Thurston} about the intersection of closed curves with simple curves. Let $\gamma$ be a non-simple curve. Each of its self-intersection points can be resolved in two possible ways (see Figure \ref{fig:resolve}). 

\begin{figure}[h]
\leavevmode \SetLabels
\endSetLabels
\begin{center}
\AffixLabels{\centerline{\includegraphics[width=6cm]{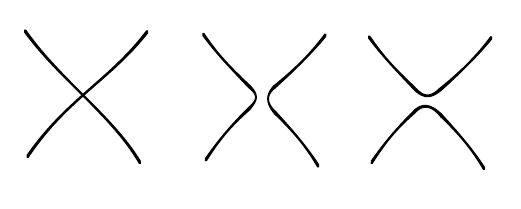}}}
\vspace{-24pt}
\end{center}
\caption{The two ways to resolve an intersection}
\label{fig:resolve}
\end{figure}

Let $\gamma_1$ and $\gamma_2$ be the two multicurves (with either one or two components) that are the result of this resolution. Now let $\eta$ be a simple curve. The formula is:
\begin{equation}\label{eqn:thurston}
i(\gamma,\eta)=\max\{i(\gamma_1,\eta),i(\gamma_2,\eta)\}
\end{equation}
 
We use the formula as follows. We begin by resolving $\alpha$ and $\beta$ into multicurves until each of their components is simple and disjoint. We then apply the mapping class group to get a new multicurve which intersects the pants curves as few times small as possible. Then we glue them back and denote the resulting curve by $\alpha'$ and $\beta'$. We would like to show that $\alpha'$ and $\beta'$ are the image of $\alpha$ and $\beta$ under a same mapping class group element action, and their intersections with pants curves can be controlled by $L$.
	
By our construction, all self-intersections of $M=\{\alpha,\beta\}$ are between arcs in cylinders. Hence we can resolve each intersection so that the parallel segments that we get from the cross are along the height of the cylinder (otherwise the intersection with pants curves will decrease). By resolving all self-intersections of $M$ in this way, we get a multicurve $C_M$ which has the same intersection number with each pants curve.

We consider the topological type of the multicurve which is a collection curves which is pairwise disjoint and pairwise non-homotopic to each other. The topological type of a multicurve is determined by the topological type of its complement, and up to mapping class group action, there are only finitely many of them.
	
Hence there exists a multicurve $C_M'$ of the same topological type which intersects the pants curves and their dual curves the least amount of times. As such there is a mapping class group $\phi$ sending $C_M$ to $C_M'$, which also sends $M$ to $M'=\{\alpha',\beta'\}$. We apply the map $\phi^{-1}$ to the current pants-cylinder decomposition to get a new one. From now on, we consider this new one We denote by $\eta_i'$ the curves in the new pants decomposition.
	
Since
	\[
		i(M,M)=i(\alpha,\alpha)+i(\alpha,\beta)+i(\beta,\beta)\le6L,
	\]
there are at most $6L$ curves. Hence the intersection numbers $i(\alpha,\eta_i')$ and $i(\beta,\eta_i')$ are bounded by $6L$.
	
Combining this with our previous result, we can conclude that we can find a curve $\gamma$ with $i(\gamma,\gamma)\le34K$ such that $i(\gamma,\alpha)\neq i(\gamma,\beta)$. Hence we can choose $C=17$, and the proposition is proved.
\end{proof}
	
\subsection{Infinitely many curves are always needed}
Here we show Corollary \ref{cor:infinite} which implies that in order to distinguish between non-simple curves, even if you have prior knowledge that they have the same self-intersection, the intersection with infinitely many curves is required.

Let $k>0$. Let $\eta_1, \hdots, \eta_n$ be a reference system of curves. We need to show that the map $\varphi:\C_k \to \N$ given by

$$\varphi(\alpha)= \left(i(\alpha,\eta_1), \hdots, i(\alpha,\eta_n)\right)
$$
is not injective. To do so we need to construct two distinct curves, say $\gamma$ and $\gamma'$, which have the same intersection with each curve in the finite reference system.

We start by considering a $\{\eta_1,\dots,\eta_n\}$-cylindrical pants decomposition $P$ and a curve $\zeta$ dual to $P$. Let $U$ and $U'$ be two cylinder neighborhoods associated to two pants curves $\eta$ and $\eta'$ through which $\zeta$ passes. Now we take another copy of $\zeta$, and twist it in $U$ and $U'$ such that it has twist parameters $1$ and $-1$ in $U$ and $U'$ (in Dehn-Thurston coordinates). This new curve we call $\zeta'$.

First observe that $\zeta$ and $\zeta'$ are disjoint and that their union in $\Sigma\setminus(U\cup U')$ are two pairs of parallel paths. We have two ways to connect them into a figure-8 curve by adding a crossing in one of the two pairs of parallel paths. Notice that these two curves are homotopically different as the part connecting the two pairs of parallel paths has non-trivial topology. We denote the two curves by $\gamma_0$ and $\gamma_0'$.
\begin{figure}[h]
\leavevmode \SetLabels
\endSetLabels
\begin{center}
\AffixLabels{\centerline{\includegraphics[width=12cm]{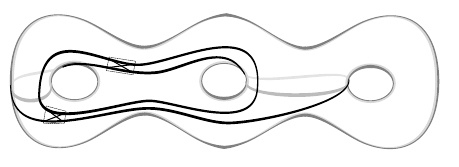}}}
\vspace{-24pt}
\end{center}
\caption{The curves $\gamma_0$ and $\gamma_0'$}
\label{fig:genus3curvejoin}
\end{figure}

Now in order to get $k$ self-intersections, instead of $\zeta$, we consider $\zeta^k$, let $\zeta'$ be the same curve as above, and repeat the above construction to get two curves $\gamma_1$ and $\gamma_1'$. Notice that $P$ is the $\{\eta_1,\dots,\eta_n\}$-cylindrical pants decomposition, hence all $\eta_i$s have twist parameter either $0$ or $1$. We apply Dehn twists at $U$ and $U'$ to increase the twisting parameters of $\gamma_1$ and $\gamma_1'$, and such that their arcs in $U$ and $U'$ intersect all $\eta_i$s with negative twisting parameters. The resulting curves are denoted by $\gamma$ and $\gamma'$. We denote by $\phi$ the mapping class used here.

We now claim that they intersect each $\eta_i$ the same number of times. The reason is that, by construction, all curves only intersects in the cylinders, and the curves $\phi(\zeta^k)$ and $\phi(\zeta')$ intersect all the $\eta_i$s positively. We claim that this implies that they are minimal position as this implies the bigon criterion: if there was a bigon, there would be intersections with different signs side by side. 

The set $\phi(\zeta^k\cup\zeta')\cap U$ is a union of arcs intersecting $\eta_i$ negatively. Adding extra crossing between arcs in $U$ will not change the fact that they are in minimal intersecting position with $\eta_i$ and their intersection number with $\eta_i$. Hence we found distinct $\gamma$ and $\gamma'$ such that their images under $\varphi$ are the same, which proves Corollary \ref{cor:infinite}.

\appendix

\section{Measured hexagon decompositions of bordered surfaces}\label{app:hexagon}
This appendix is used in Section \ref{IIC}. Let $\Sigma$ be an oriented surface of genus $g>0$ with $n>0$ boundary components. We denote by $\partial\Sigma$ the boundary of $\Sigma$. A \textit{hexagon decomposition} $T$ of $\Sigma$ is a maximal collection of disjoint pairwise non-homotopic simple arcs of $\Sigma$ with endpoints on $\partial \Sigma$. By cutting along arcs in $T$, the surface $\Sigma$ is decomposed into hexagons, hence the name. A \textit{measured hexagon decomposition} is a couple $(T,f)$, where $T$ is a hexagon decomposition of $\Sigma$ and $f$ is map from $T$ to $\R$ (note there is no condition on the measures being positive). 

Let $T=\{\alpha_1,...,\alpha_n\}$ be a hexagon decomposition and $(T,f)$ be a measured hexagon decomposition. Let $\gamma$ be a curve on $\Sigma$. The quantity
	\[
		l_f(\gamma)=\sum_{i=1}^nf(\alpha_i)i(\alpha_i,\gamma),
	\]
is called the \textit{$f$-length} of $\gamma$.

\begin{proposition}[Finite length rigidity]
Let $T$ be an hexagon decomposition of $\Sigma$. There exist $s\in\mathbb{N}^\ast$ and finitely many curves $\gamma_1,\dots,\gamma_s$ in $\Sigma$, such that for any pair of measures $f_1$ and $f_2$, we have $f_1=f_2$ if and only if $l_{f_1}(\gamma_i)=l_{f_2}(\gamma_i)$ for any $1\le i\le s$.
\end{proposition}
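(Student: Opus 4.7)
The proof reduces to a linear algebra computation: since $l_f(\gamma)=\sum_i f(\alpha_i)\,i(\alpha_i,\gamma)$ is $\R$-linear in $f$, the evaluation map $\Phi:\R^n\to\R^{\C(\Sigma)}$ defined by $f\mapsto (l_f(\gamma))_\gamma$ is a linear map from an $n$-dimensional domain. Its injectivity is equivalent to asking that the intersection vectors
$$v(\gamma)=(i(\alpha_1,\gamma),\dots,i(\alpha_n,\gamma))\in\R^n$$
span $\R^n$ as $\gamma$ ranges over curves on $\Sigma$. Once that span is established, any choice of $s=n$ curves $\gamma_1,\dots,\gamma_n$ whose vectors $v(\gamma_j)$ form a basis of $\R^n$ witnesses the claimed rigidity, and the statement is proved.

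To establish the spanning property, I would invoke the standard parameterization of measured laminations on a bordered surface by transverse weights on an arc system. Cutting $\Sigma$ along $T$ produces hexagonal disks; inside such a hexagon $H$ with $T$-sides $\alpha_i,\alpha_j,\alpha_k$, a simple measured lamination restricts to arcs connecting pairs of sides with non-negative integer weights $n_{ij},n_{jk},n_{ik}$ satisfying $n_i=n_{ij}+n_{ik}$ and its cyclic permutations. Solving these equations shows that the side weights $(n_i,n_j,n_k)$ are exactly the triples of non-negative integers satisfying the switch condition $n_i+n_j+n_k\in 2\Z$ together with the triangle inequalities $n_i\le n_j+n_k$ (and cyclic). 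Conversely, every vector $(n_1,\dots,n_n)\in\Z_{\ge 0}^n$ satisfying these local conditions at each hexagon is realized by a unique simple multicurve, since the hexagons are simply connected and no twist parameter appears when gluing across arcs of $T$. The vector $(2,2,\dots,2)$ lies strictly inside this cone (strict triangle inequalities, parity $6$ at every hexagon), hence the $\R$-linear span of realizable vectors is all of $\R^n$.

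Finally, since $l_f$ is additive on disjoint unions of curves, distinguishing measures by finitely many simple multicurves is the same as distinguishing them by the individual components of those multicurves. Thus selecting $n$ simple multicurves whose intersection vectors form a basis of $\R^n$ and taking all their connected components yields the required finite family of curves $\gamma_1,\dots,\gamma_s$.

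The main technical obstacle will be the realization step: verifying that every non-negative integer vector satisfying the hexagon-local switch and triangle conditions actually comes from an honest simple multicurve on $\Sigma$, with no hidden global obstruction at the gluing stage. This parallels Thurston's parameterization of measured laminations on closed surfaces and is well known for bordered surfaces equipped with an ideal decomposition into disks, but should be written out carefully in the appendix. A more hands-on alternative, bypassing lamination theory altogether, would be to construct for each arc $\alpha_k$ an explicit pair of curves whose intersection vectors differ only in the $\alpha_k$-coordinate: when the two hexagons adjacent to $\alpha_k$ coincide, the core curve of the resulting annular neighborhood crosses only $\alpha_k$ and already isolates $f(\alpha_k)$; in the generic case, one concatenates a short transversal across $\alpha_k$ with a path in the dual graph of $T$, and compares this curve with a localized twist near $\alpha_k$ to produce the required separating pair.
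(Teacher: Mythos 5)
Your reduction to linear algebra is clean and correct: since $l_f$ is $\R$-linear in $f$, the statement amounts to showing that the intersection vectors $v(\gamma)=(i(\alpha_1,\gamma),\dots,i(\alpha_n,\gamma))$ span $\R^n$, after which picking a basis among them gives a finite reference set. Your main route then appeals to the normal-coordinate parameterization of simple multicurves relative to the hexagon decomposition and uses the interior point $(2,\dots,2)$ of the admissible cone to obtain the spanning property. This is a valid strategy and, as a bonus, produces only simple curves, hence would directly prove the stronger Proposition~\ref{finitesimplelengthrigidity}; but, as you yourself flag, it rests on a realization lemma for normal multicurves that requires some care (for example, admissible weight vectors can produce boundary-parallel components, and one must verify that the realized normal multicurve is genuinely in minimal position with $T$). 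The paper's proof avoids this realization machinery entirely: it passes to the dual graph $\Gamma$ of $T$, notes that $\Gamma$ is trivalent with no valence-one vertices, and for each edge $e_j$ locates a theta-shaped or dumbbell-shaped subgraph through $e_j$, depending on how the two paths in $\Gamma\setminus e_j$ joining the four half-edges incident to $e_j$ pair up. From that subgraph one reads off two explicit loops $\gamma_j,\gamma_j'$ whose $f$-lengths differ by a fixed multiple of $f(\alpha_j)$, yielding an explicit reference system of $2|T|$ (not necessarily simple) curves with no appeal to lamination theory. Your ``hands-on alternative'' at the end points in this direction, but it misses the theta/dumbbell dichotomy that makes the construction uniform, and the phrase ``localized twist near $\alpha_k$'' does not quite parse, since $\alpha_k$ is an arc rather than a closed curve; replacing that step with the dual-graph loop construction would turn your sketch into the paper's argument.
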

\begin{proof}
We consider the dual graph $\Gamma$ of $T$. We denote by $e_i$ the edge of $\Gamma$ intersecting $a_i$.
	
Notice that $\Sigma$ admits a contraction to $\Gamma$. Each curve in $\Sigma$ has a representative in $\Gamma$. Moreover, the measure $f$ on $T$ induces a measure on the edge set of $\Gamma$, still denoted by $f$. The $f$-length of each path in $\Gamma$ is given by adding the $f$-lengths of all edges in the path together.
	
Since $\Gamma$ is a dual graph of a hexagon decomposition, it is trivalent. Given any edge $e_j$, it is adjacent to $4$  half-edges, denoted by $a_1$, $a_2$, $a_3$ and $a_4$. Since there are no valence 1 vertices in $\Gamma$ and $\Gamma$ is connected, the four half-edges are paired such that  half-edges in each pair are connected by a paths in $\Gamma\setminus e_i$. The two paths together, with $e_j$ form a graph (which may not be embedded). Depending whether the paired half-edges are adjacent to a same vertex or to two different vertices, the graph is either a theta type graph or a dumbbell type graph (see Figure \ref{fig:appendix}).
	
\begin{figure}[h]
\leavevmode \SetLabels
\L(.155*.73) $e_j$\\%
\L(.16*.16) $e_j$\\%
\L(.83*.73) $2 e_j$\\%
\L(.87*.14) $2 e_j$\\%
\endSetLabels
\begin{center}
\AffixLabels{\centerline{\includegraphics[width=14cm]{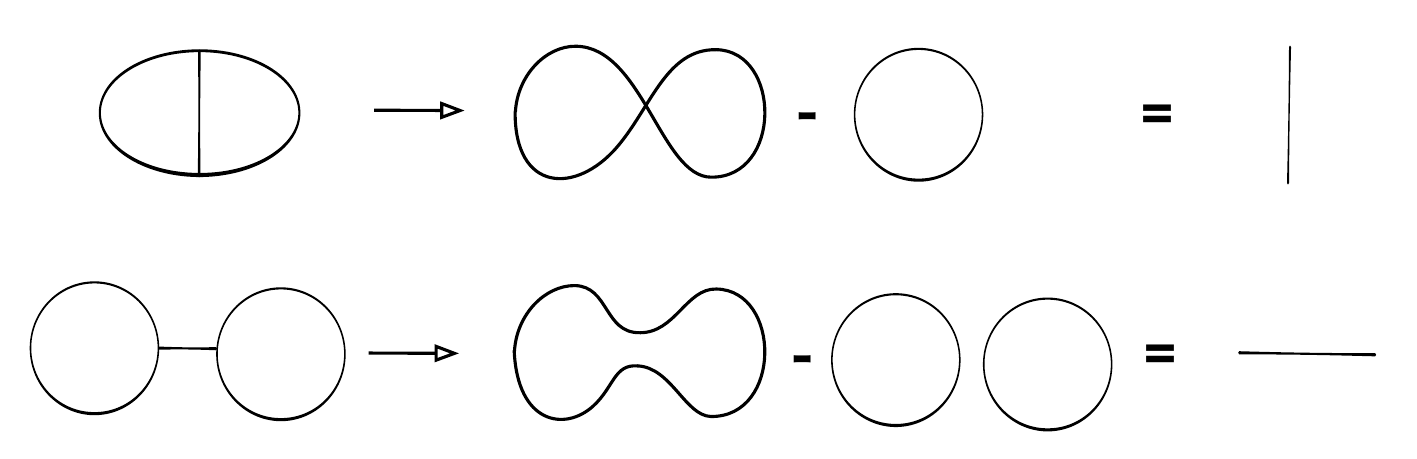}}}
\vspace{-24pt}
\end{center}
\caption{The theta and dumbbell graphs}
\label{fig:appendix}
\end{figure}

Hence the $l_f(e_j)$ is determined by the $f$ lengths of two loops $\gamma_j$ and $\gamma_j'$. Therefore the $f$-lengths of curves in the collection
		\[
			\{\gamma_1,\gamma_1',...,\gamma_n,\gamma_n'\},
		\]
determine $f$.
\end{proof}

We can moreover improve this result by requiring all the curves $\gamma_i$ in the statement to be simple.

\begin{proposition}[Finite simple length rigidity]\label{finitesimplelengthrigidity}
Let $T$ be an hexagon decomposition of $\Sigma^-$. There exist finitely many simple curves $\gamma_1',\dots,\gamma_s'$ in $\Sigma^-$ such that, for any pair of measures $f_1$ and $f_2$, $f_1=f_2$ if and only if $l_1(\gamma_i')=l_2(\gamma_i')$, where $l_j(\gamma_i')$ is the $f_j$-length of $\gamma_i'$.
\end{proposition}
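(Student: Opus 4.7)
My plan is to strengthen the previous proof by showing that finitely many simple closed curves on $\Sigma^-$ suffice. Since $l_f$ is linear in the intersection vector $(i(\alpha_1,\sigma),\dots,i(\alpha_n,\sigma))\in\R^n$, it is enough to exhibit simple closed curves whose intersection vectors span $\R^n$. I would work edge by edge on the dual graph $\Gamma$ used in the previous proof, replacing each of its (possibly non-simple) local loops $\gamma_j,\gamma_j'$ by simple closed curves that carry out the same linear algebra.

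For each edge $e_j$ of $\Gamma$, either $e_j$ is not a bridge and the local structure is a theta subgraph with edges $\{e_j,p_1,p_2\}$, or $e_j$ is a bridge and the local structure is a dumbbell with loops $L_1,L_2$ at the two endpoints of $e_j$. In the theta case, the three simple cycles $C_1=e_j\cup p_1$, $C_2=e_j\cup p_2$, and $C_3=p_1\cup p_2$ realize as simple closed curves on $\Sigma^-$: since $\Gamma$ is trivalent and each cycle visits every vertex at most once, each vertex-crossing is a smooth local transition between two arc-sides of the hexagon containing that vertex, so no self-intersection appears. The equations $l_f(C_1)=f(e_j)+f(p_1)$, $l_f(C_2)=f(e_j)+f(p_2)$, $l_f(C_3)=f(p_1)+f(p_2)$ form a $3\times 3$ system with determinant $-2$, so $f(e_j)$ is recovered.

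In the dumbbell case, the loops $L_1,L_2$ realize as simple closed curves on $\Sigma^-$ that are disjoint from $\alpha_j$, so a further curve is required to capture $f(e_j)$. I would realize the cycle $L_1\cdot e_j\cdot L_2\cdot e_j^{-1}$ as a simple closed curve $D$ on $\Sigma^-$ by breaking each $L_i$ at its basepoint vertex and connecting the four resulting endpoints via two disjoint parallel strands in the cylinder neighborhood of $\alpha_j$. The curve $D$ is simple by the bigon criterion, since the two strands are disjoint in the cylinder and $L_1,L_2$ live in the two distinct components of $\Sigma^-\setminus\alpha_j$. Its $f$-length equals $2f(e_j)+f(L_1)+f(L_2)$, and together with $l_f(L_1)$ and $l_f(L_2)$ this recovers $f(e_j)$.

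Taking the union of the simple curves produced for every edge $e_j$ yields the desired finite collection, since their $f$-lengths then determine every $f(\alpha_j)$ and hence $f$. The main obstacle I foresee is the bigon-free realization in the dumbbell case: one must carefully place the parallel strands inside the cylinder neighborhood of $\alpha_j$ and verify simplicity via the bigon criterion, in the spirit of the cylinder lemma of Section~\ref{sec:kto0}. A secondary technicality is ensuring that in the theta case one can actually choose internally vertex-disjoint paths $p_1,p_2$, which follows from the trivalent, bridgeless-at-$e_j$ structure of $\Gamma$ by a standard application of Menger's theorem.
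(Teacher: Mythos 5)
You take a genuinely different route from the paper. The paper's proof of finite \emph{simple} length rigidity is in two stages: the preceding proposition produces a finite spanning family of loops $\gamma_j,\gamma_j'$ which are \emph{allowed} to be non-simple (the ``theta'' and ``dumbbell'' configurations ``may not be embedded'' in $\Gamma$, and the version of $\gamma_j$ drawn with the $2e_j$ label in Figure~\ref{fig:appendix} runs over $e_j$ twice); the closing lemma --- Dylan Thurston's resolution formula \eqref{eqn:thurston}, transferred to arcs by doubling $\Sigma^-$ --- then replaces each such loop by a simple multicurve with the same intersection vector, and these components form the required finite collection of simple curves. You instead try to build the spanning family of simple curves directly out of simple cycles of $\Gamma$, bypassing the smoothing lemma entirely; if it worked it would be a more elementary argument.

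However, the step you dismiss as a secondary technicality is exactly where the argument breaks. That $e_j$ is not a bridge of $\Gamma$ only guarantees one $u$--$v$ path in $\Gamma\setminus e_j$. Menger's theorem would give two internally vertex-disjoint $u$--$v$ paths avoiding $e_j$ only if $\Gamma$ were $3$-connected between $u$ and $v$, and trivalence alone does not ensure this: $\Gamma\setminus e_j$ can perfectly well contain a cut vertex or cut edge separating $u$ from $v$. In that case any two $u$--$v$ paths $p_1,p_2$ must overlap along some edge $e$, the closed walk $p_1\bar p_2$ backtracks over $e$, the curve it represents does not meet $\alpha_e$ at all, and your $3\times 3$ system is solved against the wrong intersection vector for $C_3$; the determinant computation no longer recovers $f(e_j)$. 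The dumbbell case has the same defect: $e_j$ being a bridge does not force the component of $\Gamma\setminus e_j$ containing $u$ to have a simple cycle \emph{through} $u$ --- it only guarantees a cycle somewhere in that component --- so $L_1$ need not be a simple cycle and $D$ need not be realizable as a simple closed curve crossing $\alpha_j$ twice. These non-embedded configurations are precisely the ones the paper signals and handles by first tolerating non-simple loops and then invoking the smoothing lemma.

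To salvage your direct approach you would have to either prove a stronger connectivity property of the dual graphs that actually arise from hexagon decompositions (which is not automatic and is not claimed anywhere in the paper), or supply a separate treatment of the overlapping-path cases; as written, the proposal has a genuine gap and does not establish the proposition.
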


This proposition follows by applying the following lemma to the formula of the $f$-length of a curve.

\begin{lemma}
Let $T$ be a hexagon decomposition of $\Sigma$, and $\gamma$ be any non simple curve in $\Sigma$. Let $\gamma_+$ and $\gamma_-$ be the curves of $\Sigma$ obtained from $\gamma$ by resolving one self-intersection point. Then
	\[
	i(T,\gamma)=\max\{i(T,\gamma_+),i(T,\gamma_-)\}.
	\]
\end{lemma}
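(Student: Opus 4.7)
The plan is to reduce the claim to Thurston's resolution formula applied to a simple multicurve in a closed surface, via a doubling construction.

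Writing $T = \{\alpha_1, \ldots, \alpha_n\}$, I would first observe that $i(T, \gamma) = \sum_i i(\alpha_i, \gamma)$. Applying the single-arc formula (\ref{eqn:thurston}) to each $\alpha_i$ yields $i(\alpha_i, \gamma) = \max\{i(\alpha_i, \gamma_+), i(\alpha_i, \gamma_-)\}$, and summing gives the inequality
$$i(T, \gamma) = \sum_i \max\{i(\alpha_i, \gamma_+), i(\alpha_i, \gamma_-)\} \ge \max\{i(T, \gamma_+), i(T, \gamma_-)\}.$$
The substantive content is the reverse inequality, which requires showing that the same resolution (either $+$ or $-$) realizes the maximum for every $\alpha_i$ simultaneously.

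To establish this coordinated equality, I would pass to the double $\hat\Sigma = \Sigma \cup_{\partial\Sigma} \bar\Sigma$. Each arc $\alpha_i$ extends to a simple closed curve $\hat\alpha_i$ in $\hat\Sigma$, and the collection $\hat T = \bigcup_i \hat\alpha_i$ is a disjoint simple multicurve. Since $\gamma$ and its resolutions $\gamma_\pm$ lie in the interior of $\Sigma \subset \hat\Sigma$, we have $i_{\hat\Sigma}(\hat\alpha_i, \gamma) = i_\Sigma(\alpha_i, \gamma)$ for each $i$, and similarly for $\gamma_\pm$. Summing gives $i_{\hat\Sigma}(\hat T, \gamma) = i_\Sigma(T, \gamma)$ and its two resolution analogues, so the lemma is equivalent to
$$i_{\hat\Sigma}(\hat T, \gamma) = \max\{i_{\hat\Sigma}(\hat T, \gamma_+), i_{\hat\Sigma}(\hat T, \gamma_-)\},$$
which is the multicurve version of Thurston's resolution formula in the closed surface $\hat\Sigma$.

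The main obstacle is justifying this upgrade from a single simple closed curve to a simple multicurve. The standard proof of (\ref{eqn:thurston}) via the bigon criterion adapts: take $\gamma$ taut with $\hat T$, note that both resolutions leave the combinatorial count $|\gamma \cap \hat T|$ unchanged, and argue that at least one resolution remains bigon-free against every component of $\hat T$. The crux is a local analysis at the self-intersection point $p$: the four strands of $\gamma$ emerging from $p$ have a fixed cyclic order, and a bigon between a resolution and some $\hat\alpha_j$ forces a triangular configuration at $p$ using two of these strands together with an arc of $\hat\alpha_j$. Using this cyclic order, the disjointness of the components of $\hat T$, and the prior tautness of $\gamma$ with $\hat T$, one checks that the triangles required for the two resolutions pair the strands incompatibly and cannot coexist, yielding the desired equality. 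Alternatively, one can invoke Bonahon's geodesic current machinery, under which simple multicurves are currents and the intersection form is continuous, so that (\ref{eqn:thurston}) extends by density from single simple curves to simple multicurves.
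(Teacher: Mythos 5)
Your proof is correct and follows the same route as the paper: double the bordered surface so that the arcs of $T$ become a disjoint simple multicurve, then invoke Thurston's resolution formula (\ref{eqn:thurston}). The paper leaves implicit the step you flag, namely that (\ref{eqn:thurston}) as stated concerns a single simple closed curve and must be promoted to a simple multicurve; your bigon-criterion sketch (or the density-of-currents alternative) is a reasonable way to supply that step.
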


\begin{proof}
The proof follows from Equation \ref{eqn:thurston} (see \cite{Thurston}) by doubling the surface. 
\end{proof}

{\it Addresses:}\\
Department of Mathematics, FSTM, University of Luxembourg, Esch-sur-Alzette, Luxembourg\\
School of Mathematical Sciences, Nankai University, Tianjin, China\\
{\it Emails:}\\
hugo.parlier@uni.lu\\
binbin.xu@nankai.edu.cn

\end{document}